\newcommand{\Rd}{\color{black}}
\newcommand{\Bk}{\color{black}}
\numberwithin{equation}{section}
\newcommand{\bfz}{{\bf 0}}
\newcommand{\decvar}{{\mathcal D}}
\newcommand{\n}{{\bf n}}
\newcommand{\m}{{\bf m}}
\newcommand{\tv}{{\bf t}}
\newcommand{\vv}{{\bf v}}
\newcommand{\V}{{\bf V}}
\newcommand{\ww}{{\bf w}}
\newcommand{\rr}{{\bf r}}
\newcommand{\vc}{{\boldsymbol \chi}}
\newcommand{\vx}{{\boldsymbol \xi}}
\newcommand{\D}{{\mathsf{D}}}
\newcommand{\A}{{\mathsf{A}}}
\newcommand{\cde}{{\mathsf{E}}}
\newcommand{\sdiv}{{{\nabla\cdot} \,}}
\newcommand{\set}[2]{\left\lbrace #1 \; : \; #2 \right\rbrace}
\newcommand{\Eh}{\mathcal{E}_h}
\newcommand{\Sh}{\mathcal{S}_h}
\newcommand{\Lh}{\mathcal{L}_h}
\newcommand{\ra}[1]{\renewcommand{\arraystretch}{#1}}
\newcommand{\dive}{\operatorname{div}}
\newcommand{\tr}{\mathcal{R}}
\newcommand{\R}{\mathbb{R}}
\newcommand{\sdim}{\mathbb{D}}
\newcommand{\Th}{\mathcal{T}_h}
\newtheorem{theorem}{Theorem}
\newtheorem{lemma}{Lemma}
\newtheorem{proposition}{Proposition}
\newtheorem{definition}{Definition}
\theoremstyle{definition}
\newtheorem{corollary}{Corollary}
\newtheorem{Def}{Definition}[section]
\begin{document}
%  -------   TITLE   ------    %
\title[Inf-Sup stability of cubic Lagrange Stokes elements]{Cubic Lagrange elements satisfying
\\  exact incompressibility }
\date{\today}

%  -------   AUTHORS   -------   %

\author[J. Guzm\'an]{Johnny Guzm\'an\textsuperscript{\textdagger}}
\address{\textsuperscript{\textdagger} Division of Applied Mathematics, Brown University, Providence, RI 02912, USA}
\email{johnny\_guzman@brown.edu}
\thanks{}

\author[R. Scott]{L. Ridgway Scott\textsuperscript{\textdaggerdbl}}
\address{\textsuperscript{\textdaggerdbl}Departments of Computer Science and Mathematics,
Committee on Computational and Applied Mathematics,
University of Chicago, Chicago IL 60637, USA}
\email{ridg@uchicago.edu}
\thanks{}

\maketitle

\begin{abstract}  
We prove that an analog of the Scott-Vogelius finite elements are inf-sup stable 
on certain nondegenerate meshes for piecewise cubic velocity fields. 
We also characterize the divergence of the velocity space on such meshes.
In addition, we show how such a characterization relates to the dimension
of $C^1$ piecewise quartics on the same mesh.

\end{abstract}
\medskip

%   ------- KEYWORDS   ------  %
\keywords{}
\smallskip

%   ------   SUBJECT CLASS   ------   %
\subjclass[2010]{65N30, 65N12, 76D07, 65N85}
\date{}

% ---------------------------------------------- %
\section{Introduction}
% ---------------------------------------------- %

In 1985 Scott and Vogelius \cite{scott1985norm} (see also \cite{vogelius1983right}) 
presented a family of piecewise polynomial spaces in two dimensions that yield solutions 
to the Stokes equations with velocity approximations that are exactly divergence free.  
The velocity space consists of continuous piecewise polynomials of degree $k\geq 4$,
and the pressure space is taken to be the divergence of the velocity space.
Moreover, they proved stability of the method by establishing that the pair of spaces 
satisfy the  so-called {\it  inf-sup}  condition assuming that the meshes are
quasi-uniform and that the maximum mesh size is sufficiently small.
In a recent paper \cite{guzmanscottdegfour} we gave an alternative proof of the 
inf-sup stability for $k\geq 4$
on more general meshes, assuming only that they are non-degenerate (shape regular).
One key aspect in the proof is to use the stability of the $P^2-P^0$ (or the 
Bernardi-Raugel \cite{bernardi1985analysis}) finite element spaces. 
As a result the proof becomes significantly shorter.
Here we utilize and extend the techniques to the case $k=3$.
The case $k=3$ has been considered earlier \cite{ref:QinThesis}.  

One key concept in this paper is the notion of a {\it local interpolating vertex}. Roughly speaking, this is an interior vertex $z$ such that for every finite element pressure $p$ we can find a discrete velocity $\vv$  in the finite element velocity space such that $\dive \vv(z)=p(z)$ with support in the patch of $z$ and such that $\dive \vv (\sigma)=0$ for all other vertices.  Moreover, we require that $\dive \vv$ has zero mean on each triangle. We then show that if all interior vertices are local interpolating vertices then the inf-sup stability holds; see Theorem \ref{thm1}. We generalize this result to show that if some interior vertices are local interpolating and that there are  acceptable paths from any other vertex to one of the local interpolating vertices then the inf-sup condition holds; see Theorem \ref{mainthm}. In \cite{guzmanscottdegfour} we showed that all interior vertices are local interpolating vertices if piecewise quartic velocities or higher are used. In this article, we show that a generic interior vertex is local interpolating if piecewise cubics are used for the velocity space. In particular, we show that singular vertices and vertices with odd number of triangles touching it are local interpolating vertices. In the case that a non-singular interior vertex has an even number of triangles touching it then we give sufficient conditions for it to be a local interpolating vertex.  Although a generic interior vertex is a local interpolating vertex, there are important meshes were no interior vertex is locally interpolating (e.g. a diagonal mesh).  

It is known that the $C^1$ piecewise quartic space which we denote by $\widehat S_h^4$ is related to the piecewise cubic Lagrange space. Surprisingly, the dimension of  $\widehat S_h^4$ has not been verified for all meshes, but it has been verified for a large class of meshes; see \cite{lrsBIBbv,alfeld1987explicit}.  In the last sections of this paper we use the onto-ness of the divergence operator of piecewise cubics to verify the dimension of $\widehat S_h^4$ for a large class of meshes. We also are able to verify the dimension of $S_h^4$ which are $C^1$ piecewise quartic space with functions vanishing on the boundary to second-order.

The paper is organized as follows.
In the following section we begin with Preliminaries. 
%In Section \ref{sec:review} we recall several required results.
In Section \ref{sec:locintrpvv} we identify vertices at which we can interpolate 
pressure vertex values with the divergence of localized velocity fields.
In Section \ref{sec:kaythree} we prove the inf-sup 
stability for $k = 3$ under some restrictive assumptions on the mesh. 
In Section \ref{sec:gencase} we characterize the divergence of piecewise cubics 
on a broader class of meshes than considered in Section \ref{sec:kaythree}.
In Section \ref{sec:relateqin} we compare our the results with those 
of \cite{ref:QinThesis}.
In Section \ref{sec:strangdim} we relate our results to the dimension of 
$C^1$ piecewise quartics.

\section{Preliminaries}\label{preliminaries}

We assume $\Omega$ is a polygonal domain in two dimensions. 
We let $\{ \mathcal{T}_h\}_h$ be a non-degenerate (shape regular) family triangulation 
of $\Omega$; see \cite{brenner2007mathematical}. 
The set of vertices and the set of internal edges are denoted by
\begin{alignat*}{1}
\Sh&=\{ x:  x  \text{ is a vertex of } \Th \}, \\
\Eh&=\{ e: e \text{ is an edge of }  \Th \text{ and } e \not\subset \partial \Omega \}.\\ 
 \end{alignat*}
We also let $\Sh^{\text{int}}$ and $\Sh^{\partial}$ denote all the interior vertices and boundary vertices, respectively.

Define the internal edges $\Eh(z)$ and triangles $\Th(z)$ that have $z \in \Sh$ as a vertex via
\begin{equation*}
\Eh(z)=\{ e \in \Eh: z \text{ is a vertex of } e \}, \qquad
\Th(z)=\{ T \in \Th: z \text{ is a vertex of } T \}.  
\end{equation*}
Finally, we define the patch
\begin{equation*}
\Omega_h(z)= \bigcup_{T \in \Th(z)} T .
\end{equation*}
The diameter of this patch is denoted by 
\begin{equation}\label{eqn:aitchzeedef}
h_z= \text{ diam } (\Omega_h(z)).
\end{equation}

To define the pressure space we must define singular and non-singular vertices. 
Let $z \in \Sh$  and suppose that $\Th(z)=\{ T_1, T_2, \ldots T_N \}$,
enumerated so that $T_j, T_{j+1}$ share an edge for $j=1, \ldots N-1$ and  
$T_N$ and $T_1$ share an edge in the case $z$ is an interior vertex. 
If $z$ is a boundary vertex then we enumerate
the triangles such that $T_1$ and  $T_N$ have a boundary edge.
Let $\theta_j $ denote the angle between the edges of $T_j$ originating from $z$. Define
 \[
\Theta(z)= 
\begin{cases}
\max \{ |\sin(\theta_1+\theta_{2})|,  \ldots, |\sin(\theta_{N-1}+\theta_{N})|, |\sin(\theta_N+\theta_1)| \}& \text{ if }  z \text{ is an interior vertex } \\
\max \{ |\sin(\theta_1+\theta_{2})|,  \ldots, |\sin(\theta_{N-1}+\theta_{N})| \} 
     & \text{ if }  z \text{ is a boundary vertex }.
\end{cases}
\]

\begin{Def}
A vertex $z \in \Sh$ is a singular vertex if $\Theta(z)=0$. 
It is non-singular if $\Theta(z) >0$.
\end{Def}
This is equivalent to the original definition given in 
\cite{lrsBIBaf,vogelius1983right}. 

\begin{figure}
\centerline{\qquad\includegraphics[scale=.8]{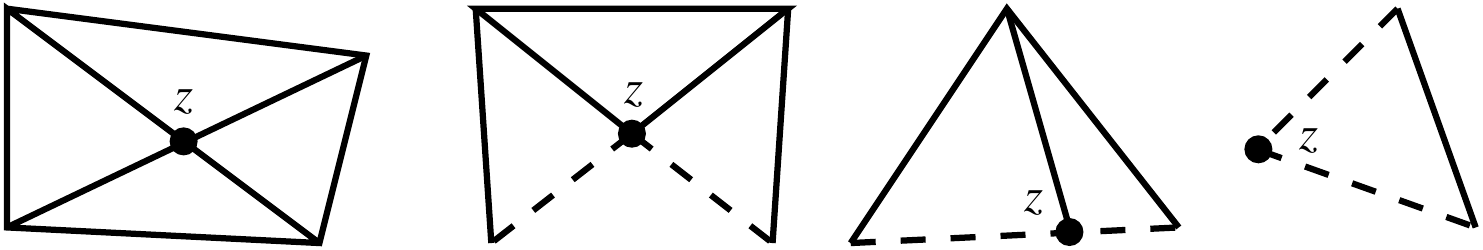}}
\footnotesize
\ra{1.1}
%\vspace{-120pt}
\caption{Example of singular vertices $z$. Dashed edges denote boundary edges. }
\label{fig:singverts}
\end{figure}

We denote all the non-singular vertices by
\begin{equation*}
\Sh^1=\{ x \in \Sh: x \text{ is non-singular, that is, } \Theta(x)>0\},
\end{equation*}
and all singular vertices by $\Sh^2=\Sh \backslash \Sh^1$.  
We also define, for $i=1,2$, $\Sh^{i, \text{int}}=\Sh^i \cap \Sh^{\text{int}}$, 
and  $\Sh^{i, \partial}=\Sh^i \cap \Sh^{\partial}$.

Let $q$ be a function such that $q|_T \in C(\overline{T})$ for all $T \in \Th$. 
For each vertex $z \in \Sh^2$ define
\begin{equation}\label{eqn:ehaitchzeeq}
A_h^z(q)= \sum_{j=1}^N (-1)^{j-1} q|_{T_j}(z).
\end{equation}
The Scott-Vogelius finite element spaces for $k \ge 1$ are defined by
\begin{alignat*}{1}
\V_h^k&=\{v \in [C_0(\Omega)]^2: v|_T \in [P^k(T)]^2, \forall T \in \Th\}, \\
Q_h^{k-1}&= \{ q \in L_0^2(\Omega): q|_T \in P^{k-1}(T), \forall T \in \Th, A_h^z(q)=0 \text{ for all }  z  \in \Sh^2\}. 
\end{alignat*}
Here $P^k(T)$ is the space of polynomials of degree less than or equal to $k$ defined on $T$. Also, $L_0^2(\Omega)$ denotes the subspace of $L^2$ of functions that have average zero on $\Omega$.

We also make the following definition
\begin{equation*}
\V_{h,0}^k=\{\vv\in\V_h^k: \int_T \dive\vv \, dx=0, \quad \text{ for all } T \in \Th \}.
\end{equation*}

%If all vertices are non-singular, then $Q_h^{k-1}$ is the space of piecewise polynomials 
%of degree $k-1$ with average zero on $\Omega$. 
%At singular vertices the pressure space is constrained. 
%An interior vertex $z$ is singular iff there are only four edges meeting at $z$ and
%these edges lie on two lines containing $z$.
The definition of $Q_h^{k-1}$ is based on the following 
result \cite{scott1984conforming}.
\begin{lemma}\label{lemma1}
For $k \ge 1$, 
$\dive \V_h^k \subset Q_h^{k-1}$.
\end{lemma}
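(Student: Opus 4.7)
My plan is to verify the three defining conditions of $Q_h^{k-1}$ for $q=\dive\vv$ with $\vv\in\V_h^k$. Two of the three are immediate: the zero-mean property $\int_\Omega \dive\vv \, dx = 0$ follows from the divergence theorem together with the homogeneous Dirichlet condition $\vv|_{\partial\Omega}=0$, and $(\dive\vv)|_T\in P^{k-1}(T)$ is obvious from $\vv|_T\in[P^k(T)]^2$. The substantive content is the alternating-sum condition $A_h^z(\dive\vv)=0$ at every singular vertex $z\in\Sh^2$.

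To establish the alternating-sum identity, I would fix such a $z$ with $\Th(z)=\{T_1,\ldots,T_N\}$ enumerated as in the preliminaries, and let $e_j$ denote the edge shared by $T_{j-1}$ and $T_j$ (cyclically when $z$ is interior; for boundary $z$, the two boundary edges bracket the linear chain). Since $\vv$ is continuous across each interior edge, the tangential derivative $\partial_{\tau_j}\vv$ is single-valued along $e_j$; evaluating at $z$ forces the jump $[\nabla\vv]_{e_j}(z)\,\tau_j=0$, so $[\nabla\vv]_{e_j}(z)$ is a rank-one matrix of the form $a_j\otimes n_j$ for some $a_j\in\mathbb{R}^2$, and taking the trace yields the key identity $[\dive\vv]_{e_j}(z)=a_j\cdot n_j$.

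I would then bring in the singular condition. For an interior singular vertex, $\Theta(z)=0$ combined with $\sum_j\theta_j=2\pi$ forces $N=4$ and makes $(e_1,e_3)$, $(e_2,e_4)$ two pairs of collinear rays, so $n_3=-n_1$ and $n_4=-n_2$ in a consistent orientation. Writing out the jump relations for each component of $\nabla\vv$ in the cyclic order $T_1\to T_2\to T_3\to T_4\to T_1$ and imposing the closure condition that $\nabla\vv$ return to itself after four jumps yields simple symmetry constraints on the $a_j$'s, from which a direct expansion of $\sum_{j=1}^{4}(-1)^{j-1}\dive\vv|_{T_j}(z)$ collapses to zero by telescoping. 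For a boundary singular vertex the loop is open and closure is lost, but the Dirichlet condition $\vv=0$ on the two bounding boundary rays kills the tangential derivatives of $\vv$ at $z$ along those rays; the resulting linear constraints on the $\nabla\vv|_{T_j}(z)$ play the same role as closure and again collapse the alternating sum.

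The main obstacle will be the bookkeeping: one must fix orientation conventions for each $n_j$, treat cyclic versus linear indexing for interior and boundary vertices uniformly, and keep track of sign alternations as one walks around $z$. Once the jump identity $[\dive\vv]_{e_j}(z)=a_j\cdot n_j$ is in place, however, the underlying algebra in each case reduces to a few lines.
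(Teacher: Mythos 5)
Your argument is correct, and there is nothing in the paper to compare it against line by line: the paper does not prove Lemma \ref{lemma1} at all, but states it as known and attributes it to \cite{scott1984conforming}, so what you have written is essentially a self-contained reconstruction of the classical Scott--Vogelius argument. The two easy conditions (piecewise degree $k-1$, and zero mean via the divergence theorem and $\vv|_{\partial\Omega}=0$) are handled exactly as they should be. Your rank-one jump formulation of the alternating-sum condition is sound: continuity of $\vv$ across an edge $e$ with tangent $\tau$ and normal $n$ gives $[\nabla\vv]_e(z)\,\tau=0$, hence $[\nabla\vv]_e(z)=a\otimes n$ and $[\dive\vv]_e(z)=a\cdot n$. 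At an interior singular vertex one indeed has $N=4$ with $n_3=-n_1$, $n_4=-n_2$, and the closure identity $\sum_{j}a_j\otimes n_j=0$ combined with the linear independence of $n_1,n_2$ (the two lines through $z$ are distinct because each $\theta_j\in(0,\pi)$) forces $a_1=a_3$ and $a_2=a_4$, after which $\sum_{j=1}^4(-1)^{j-1}\dive\vv|_{T_j}(z)=(a_3-a_1)\cdot n_1=0$; do state the linear-independence step explicitly, since that is what upgrades the matrix identity to equality of the $a_j$'s. For boundary singular vertices your mechanism works but deserves the short explicit case analysis $N=1,2,3$: the Dirichlet condition annihilates the directional derivative of $\nabla\vv|_{T_j}(z)$ along the boundary ray of the extreme triangles, the interior edges lie on the same two lines so tangential continuity transfers those directional derivatives across the patch, and expanding each $\dive\vv|_{T_j}(z)$ in the dual basis of the two line directions makes the alternating sum telescope to zero (for $N=1$ both edge directions at $z$ are boundary directions, so $\nabla\vv|_{T_1}(z)=0$ and the single-term sum vanishes outright). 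These are bookkeeping completions rather than gaps; your route is the standard one underlying the cited reference.
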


The goal of this article is to prove the inf-sup stability of the pair 
$\V_h^k, Q_h^{k-1}$ for $ k = 3$, for certain meshes.
\begin{Def}
The pair of spaces $\V_h^k, Q_h^{k-1}$ is inf-sup stable on a family of 
triangulations $\{\Th\}_h$ if there exists $\beta>0 $ such that for all $h$
\begin{equation}\label{inf-sup}
\beta \, \|q\|_{L^2(\Omega)} \le \sup_{ \vv \in \V_h^{k},  \vv \not\equiv 0}  
\frac{\int_{\Omega} q \dive \vv  \, dx}{ \|\vv\|_{H^1(\Omega)}}  
\quad \quad \forall q \in  Q_h^{k-1}. 
\end{equation}
\end{Def}

\subsection{Notation for piecewise linears}

For every $z \in \Sh$ the function $\psi_z$  is the continuous, piecewise linear 
function corresponding to the vertex $z$. That is, for every $y \in \Sh$
\begin{equation}\label{eqn:weneedthis}
\psi_z(y)= 
\begin{cases}
1 \quad \text{ if } y=z,\\
0 \quad \text{ if } y \not=z. 
\end{cases}
\end{equation}

\begin{figure}
\centerline{\qquad\includegraphics[scale=.7]{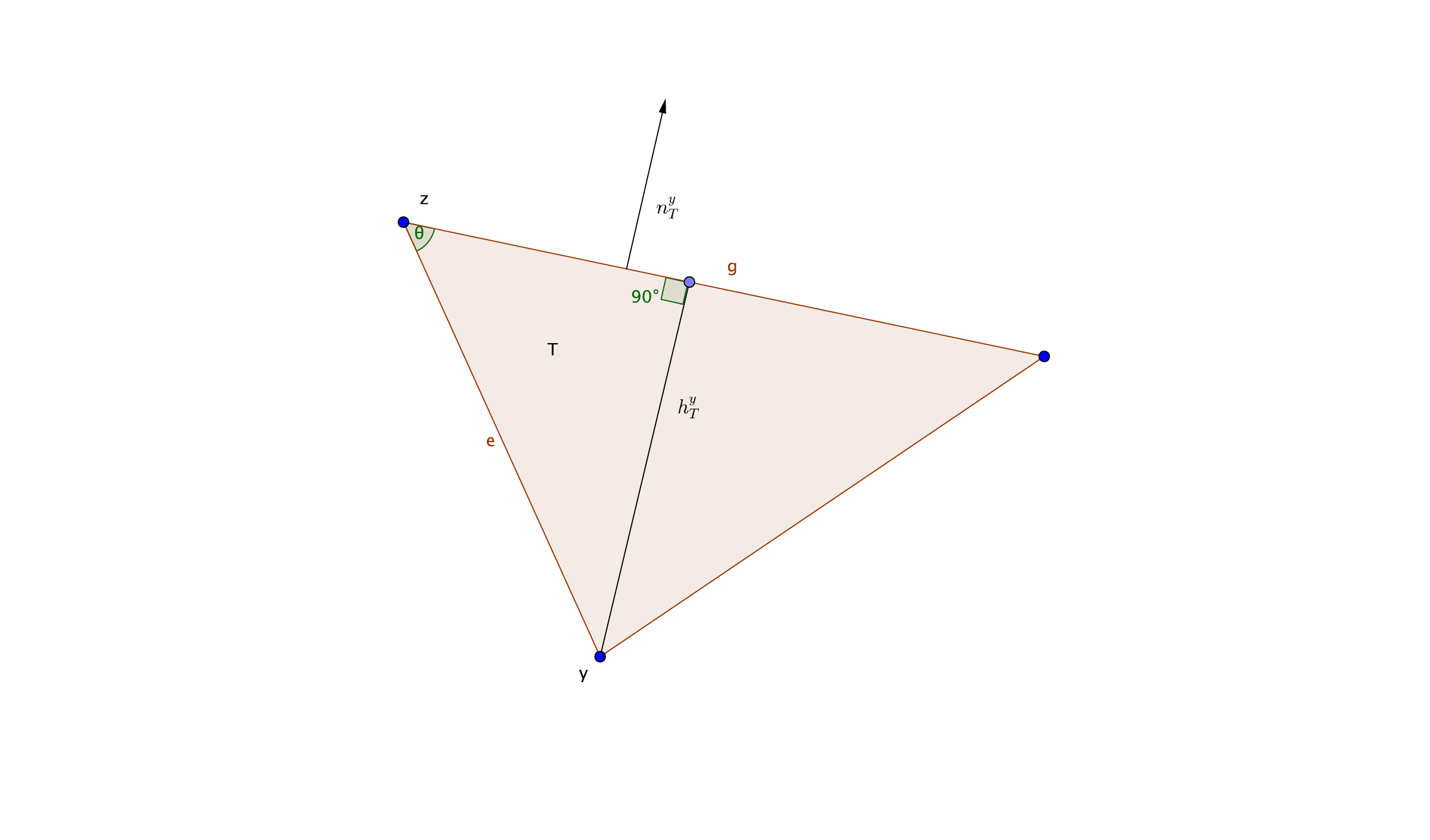}}
\footnotesize
\ra{1.1}
\vspace{-40pt}
\caption{Notation for single triangle}
\label{fig:nT}
\end{figure}

 For $z \in \Sh$ and $e \in \Eh(z)$, where $e=\{ z, y\}$, let $\tv_e^z=|e|^{-1}(y-z)$
be a unit vector tangent to $e$, where $|e|$ denotes the length of the edge $e$.  Then
\begin{equation}\label{tpsi}
\tv_e^z \cdot \nabla \psi_y= \frac{1}{|e|}  \text{ on }  e. 
\end{equation}
More generally, suppose that $T \in \Th(y)$ and let $g$ be the edge of $T$ 
that is opposite to $y$; see Figure \ref{fig:nT}. 
Let $\n_T^y$ be the unit normal vector to $g$ that points out of $T$. Then
\begin{equation}\label{nablapsi}
\nabla \psi_y|_T = - \frac{1}{h_T^y} \n_T^y,   
\end{equation}
where $h_T^y$  is the distance of $y$ to the line defined by the edge $g$. 
If $z$ is another vertex of $T$ and  $e=\{ z, y\}$  then  
\begin{equation}\label{hTj}
h_T^y=  |e|\sin\theta \, ,
\end{equation}
where $\theta$ is the angle between the edges of $T$ emanating from $z$.

\subsection{Preliminary stability results}

The following is a consequence of the stability of the Bernardi-Raugel 
\cite{bernardi1985analysis} finite element spaces.

\begin{proposition}\label{prop1}
Let $k \ge 1$. There exists a constant $\alpha_1$ such that for every $p \in Q_h^{k-1}$  
there exists a $\vv \in \V_h^2$ such that 
\begin{equation*}
\int_{T} \dive \vv \, dx =\int_T p \, dx \quad \text{ for all } T \in \Th,
\end{equation*}  
and 
\begin{equation*}
\|\vv\|_{H^1(\Omega)} \le \alpha_1  \|p\|_{L^2(\Omega)}. 
\end{equation*}
The constant $\alpha_1$ is independent of $p$ and only depends on the shape regularity of the mesh and $\Omega$.
\end{proposition}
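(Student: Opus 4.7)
The plan is to reduce to the inf-sup stability of the Bernardi--Raugel element against piecewise constants. Since the Bernardi--Raugel velocity space consists of continuous piecewise linears enriched with normal edge bubbles that are polynomials of degree $2$ on each triangle, it is a subspace of $\V_h^2$, so any velocity obtained from the Bernardi--Raugel inf-sup condition is admissible here.

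First I would pass from $p \in Q_h^{k-1}$ to its $L^2$-projection onto piecewise constants. Define $\bar p \in L^2(\Omega)$ by
\begin{equation*}
\bar p|_T = \frac{1}{|T|}\int_T p\, dx, \qquad T\in\Th .
\end{equation*}
Because $p \in L_0^2(\Omega)$, we have $\int_\Omega \bar p\, dx = \int_\Omega p\, dx = 0$, so $\bar p$ lies in the piecewise-constant, mean-zero pressure space of the Bernardi--Raugel pair. Orthogonality of the projection gives the bound $\|\bar p\|_{L^2(\Omega)} \le \|p\|_{L^2(\Omega)}$.

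Next I would invoke the Bernardi--Raugel inf-sup stability \cite{bernardi1985analysis} applied to $\bar p$. This yields a velocity $\vv$ in the Bernardi--Raugel space (hence in $\V_h^2$) such that
\begin{equation*}
\int_T \dive \vv\, dx \;=\; \int_T \bar p\, dx \qquad \text{for all } T \in \Th,
\end{equation*}
and $\|\vv\|_{H^1(\Omega)} \le C\,\|\bar p\|_{L^2(\Omega)}$, where $C$ depends only on the shape regularity of the mesh and on $\Omega$. By construction of $\bar p$, the right-hand side equals $\int_T p\, dx$ for every $T$, which is the first claim. Chaining the stability bound with $\|\bar p\|_{L^2}\le\|p\|_{L^2}$ gives the desired estimate with $\alpha_1=C$.

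There isn't really a hard step here; the only thing to be careful about is that the Bernardi--Raugel element is most often stated as an inf-sup condition in the variational form, so one has to extract the equivalent "right inverse of $\dive$ onto the orthogonal of the constants on each triangle" formulation (a standard Fortin-type argument). Once that is in hand, the proposition follows immediately since $\V_h^2$ contains the Bernardi--Raugel velocities.
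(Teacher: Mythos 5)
Your argument is correct and follows exactly the route the paper takes: the paper states Proposition \ref{prop1} as a direct consequence of the Bernardi--Raugel stability result of \cite{bernardi1985analysis}, which is precisely what you carry out by projecting $p$ onto elementwise constants and using that the Bernardi--Raugel velocities lie in $\V_h^2$. The only detail you flag --- converting the inf-sup condition into a bounded right inverse of the divergence with prescribed elementwise means --- is standard and handled as you describe, so there is no gap.
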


The next result is a simple consequence of Lemma 2.5 of \cite{vogelius1983right}
and is based on a simple counting argument.

\begin{proposition}\label{lemma3}
Let $k \ge 1$. There exists a constant $\alpha_2>0$ such that for every $p \in Q_h^{k-1}$ with $p(z) =0$ for all $z \in \Sh$ and $\int_T p \, dx =0$ for all $T \in \Th$ there exists $\vv \in \V_h^k$ such that 
\begin{equation*}
\dive \vv = p\text{ on }  \Omega,
\end{equation*}
and
\begin{equation*}
\|\vv\|_{H^1(\Omega)} \le \alpha_2 \|p\|_{L^2(\Omega)}.
\end{equation*}
\end{proposition}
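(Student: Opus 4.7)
The plan is to exploit the fact that the hypotheses on $p$ are purely local: on each triangle $T$, the polynomial $p|_T\in P^{k-1}(T)$ vanishes at the three vertices of $T$ and has mean zero over $T$. This invites a triangle-by-triangle construction. Specifically, I would seek $\vv=\sum_{T\in\Th}\vv_T$, where each $\vv_T\in[P^k(T)]^2$ vanishes on $\partial T$ and satisfies $\dive\vv_T=p|_T$; extension by zero outside $T$ then produces a continuous function and the sum lies in $\V_h^k$. To enforce $\vv_T|_{\partial T}=\bfz$ automatically, I would search inside the bubble subspace $b_T[P^{k-3}(T)]^2$, where $b_T$ is the cubic bubble on $T$ (the product of the three barycentric coordinates). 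For $k\le 2$ the vertex and mean conditions already force $p|_T\equiv 0$, so one takes $\vv_T=\bfz$ and there is nothing to prove.

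The heart of the matter is that for each $T$ the divergence maps the bubble space surjectively onto $W_T:=\{q\in P^{k-1}(T):q(z_i)=0,\ \int_T q\,dx=0\}$. Containment of the image in $W_T$ is straightforward: writing $\vv_T=b_T\ww$, one has $\dive\vv_T=\nabla b_T\cdot\ww+b_T\dive\ww$, and since $b_T$ and $\nabla b_T$ both vanish at each vertex, $\dive\vv_T(z_i)=0$; while $\int_T\dive\vv_T=\int_{\partial T}\vv_T\cdot\n=0$ because $b_T\equiv 0$ on $\partial T$. The reverse inclusion is Lemma~2.5 of~\cite{vogelius1983right}, obtained by a dimension count: for $k=3$ both source and target have dimension $2$, and injectivity (hence surjectivity) follows because $\dive(b_T(a,b))=(a,b)\cdot\nabla b_T$ vanishes identically only if $(a,b)=\bfz$, since $\nabla b_T$ does not have constant direction on $T$; the higher-$k$ case is analogous in spirit but relies on the stream-function description of divergence-free bubbles.

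Once local surjectivity is in hand, I would take $\vv_T$ to be the element of minimal $H^1(T)$-norm among pre-images of $p|_T$. A standard reference-element scaling, together with shape regularity, then yields $\|\vv_T\|_{H^1(T)}\le C\|p|_T\|_{L^2(T)}$ with $C$ depending only on $k$ and on the shape-regularity constant. Assembling, $\vv=\sum_T\vv_T\in\V_h^k$ satisfies $\dive\vv=p$ on $\Omega$, and because the $\vv_T$ have disjoint interiors, $\|\vv\|_{H^1(\Omega)}^2=\sum_T\|\vv_T\|_{H^1(T)}^2\le C^2\sum_T\|p|_T\|_{L^2(T)}^2=C^2\|p\|_{L^2(\Omega)}^2$, which is the claimed estimate with $\alpha_2=C$.

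The main technical obstacle is the local surjectivity of $\dive$ onto $W_T$. For the cubic case $k=3$ that motivates this paper, the counting-plus-injectivity argument sketched above is short and self-contained; for general $k\ge 3$ one invokes Vogelius' Lemma~2.5 as a black box. Everything else in the proof is a routine localization and scaling argument that contributes no essential difficulty.
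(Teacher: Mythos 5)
Your argument is correct and is essentially the paper's own approach: the paper proves this proposition by citing Lemma~2.5 of Vogelius together with a counting argument, which is exactly the per-triangle surjectivity of $\dive$ from the bubble space $b_T[P^{k-3}(T)]^2$ onto the vertex-vanishing, mean-zero part of $P^{k-1}(T)$ that you establish, followed by zero-extension and a reference-element scaling for the uniform bound. You simply supply the details (dimension count, kernel via stream functions, shape-regular scaling) that the paper leaves to the citation.
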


Using the above results we can prove inf-sup stability as long as we can interpolate pressure vertex values with the divergence of velocity fields.  This is the subject of the next result.

\begin{lemma}\label{lemma4}
Suppose there exists a constant $\alpha_3>0$ such that  for every $p \in Q_h^{2}$ there exists a $\vv \in  \V_{h,0}^3$ satisfying
\begin{alignat}{1}
(\dive \vv-p)(\sigma)&=0 \quad \text{ for all } \sigma \in \Sh,
\end{alignat}
with the bound
\begin{equation*}
\|\vv\|_{H^1(\Omega)}  \le \alpha_3 \|p\|_{L^2(\Omega)}.
\end{equation*}
Then, \eqref{inf-sup} holds for $k=3$ with $\beta =\frac{1}{\alpha_1+ \alpha_3(1+\alpha_1)+\alpha_2(1+ \alpha_3(1+\alpha_1))}.$
\end{lemma}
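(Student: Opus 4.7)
The plan is to decompose $p \in Q_h^2$ into three pieces, each handled by one of the available tools: Proposition \ref{prop1}, the hypothesis of the lemma, and Proposition \ref{lemma3}.

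First I apply Proposition \ref{prop1} to $p$ to obtain $\vv_1 \in \V_h^2 \subset \V_h^3$ with $\int_T \dive\vv_1\,dx = \int_T p\,dx$ for every $T \in \Th$ and $\|\vv_1\|_{H^1(\Omega)} \le \alpha_1 \|p\|_{L^2(\Omega)}$. Let $p_1 := p - \dive\vv_1$; by Lemma \ref{lemma1} we have $\dive\vv_1 \in Q_h^2$, so $p_1 \in Q_h^2$, and by construction $\int_T p_1\,dx = 0$ on every triangle. The hypothesis may then be applied to $p_1$, yielding $\vv_2 \in \V_{h,0}^3$ with $(\dive\vv_2 - p_1)(\sigma) = 0$ for every $\sigma \in \Sh$ and $\|\vv_2\|_{H^1(\Omega)} \le \alpha_3\|p_1\|_{L^2(\Omega)}$. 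Set $p_2 := p_1 - \dive\vv_2$; since $\vv_2 \in \V_{h,0}^3$, $\dive\vv_2$ has vanishing mean on each triangle, so the zero-mean property is preserved ($\int_T p_2\,dx = 0$), and additionally $p_2(\sigma) = 0$ at every vertex. Finally, Proposition \ref{lemma3} applied to $p_2$ produces $\vv_3 \in \V_h^3$ with $\dive\vv_3 = p_2$ and $\|\vv_3\|_{H^1(\Omega)} \le \alpha_2\|p_2\|_{L^2(\Omega)}$. Then $\vv := \vv_1+\vv_2+\vv_3 \in \V_h^3$ satisfies $\dive\vv = p$.

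To extract the inf-sup constant I add the three $H^1$-norms, use $\|\dive\vv_j\|_{L^2(\Omega)} \le \|\vv_j\|_{H^1(\Omega)}$ with the triangle inequality to bound successively $\|p_1\|_{L^2(\Omega)}$ and $\|p_2\|_{L^2(\Omega)}$ by constant multiples of $\|p\|_{L^2(\Omega)}$, and substitute $\int_\Omega p\,\dive\vv\,dx = \|p\|_{L^2(\Omega)}^2$ into the Rayleigh quotient in \eqref{inf-sup}; the latter is then bounded below by $\|p\|_{L^2(\Omega)}/D$ where $D$ is the sum of the three $H^1$-bounds, giving the stated value of $\beta$. The argument has no single ``hard'' step: the content lies in the three preliminary results themselves. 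The only thing to verify carefully along the way is that each remainder $p_j$ still belongs to $Q_h^2$---in particular, satisfies the singular-vertex constraint $A_h^z(\cdot) = 0$---which follows from Lemma \ref{lemma1} applied to $\dive\vv_1$ and $\dive\vv_2$.
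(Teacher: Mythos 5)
Your proposal is correct and follows essentially the same route as the paper: correct the triangle means with Proposition \ref{prop1}, interpolate the vertex values of the remainder using the hypothesis, kill the rest with Proposition \ref{lemma3}, and test the sum against $p$ to read off $\beta$. The extra care you take in checking that each remainder stays in $Q_h^2$ (via Lemma \ref{lemma1}) and retains zero triangle means is exactly what the paper's argument implicitly uses.
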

\begin{proof}
Let $p$ be an arbitrary function in $Q_h^2$. First, we let $\vv_1$ be from Proposition \ref{prop1}. We define $p_1=p-\dive \vv_1$. Then, from our hypothesis let $\vv_2 \in V_{h,0}^3$ be such that $(\dive(\vv_2)-p_1)(\sigma)=0$   for all  $\sigma \in \Sh$. Letting $p_2=p_1-\dive \vv_2$ we see that $p_2$ satisfies the hypotheses of Proposition \ref{lemma3} and we let $\vv_3$ be the resulting vector field. Then, we set $\ww=\vv_1+\vv_2+ \vv_3$. Then we see that 
\begin{equation*}
\dive \ww=p \quad \text{ on } \Omega, \\
\end{equation*}
and
\begin{equation}
\|\ww\|_{H^1(\Omega)} \le  
(\alpha_1+ \alpha_3(1+\alpha_1)+\alpha_2(1+ \alpha_3(1+\alpha_1))) 
\|p\|_{L^2(\Omega)} .\label{ww101}
\end{equation}
Hence,
\begin{equation*}
\|p\|_{L^2(\Omega)}^2 = \int_{\Omega} p \, \dive \ww   dx \le \|\ww \|_{H^1(\Omega)} \sup_{ \vv \in \V_h^{3},  \vv \not\equiv 0}  \frac{\int_{\Omega} p \dive \vv  \, dx}{ \|\vv\|_{H^1(\Omega)}} .
\end{equation*}
The result now follows after applying \eqref{ww101}.
\end{proof}

Hence, one way of proving inf-sup stability is two establish the hypothesis of Lemma \ref{lemma4}. This is going to be the task of the next sections.

\section{Locally interpolating vertex values}
\label{sec:locintrpvv}

In this section we will identify vertices at which we can interpolate pressure vertex 
values with the divergence of velocity fields. We first define the local spaces
\begin{alignat*}{1}
\V_h^k(z)&= \{\vv \in \V_h^k: \text{ supp } \vv \subset \Omega_h(z) \},  \\
\V_{h,0}^k(z)&= \{v \in \V_h^k(z): \int_{T } \dive v \,dx=0, \text{ for all } T \in \Th(z)  \},  \\
\V_{h,00}^k(z)&= \{v \in \V_{h,0}^k(z): \dive v (\sigma)=0, \text{ for all } \sigma \in \Sh, \sigma \neq z \}.
\end{alignat*}
Suppose that $z \in \Sh$ and that $\Th(z)=\{ T_1, T_2,  \ldots, T_N  \}$, ordered
as described following \eqref{eqn:aitchzeedef}.
Then in view of \eqref{eqn:ehaitchzeeq} we define 
\begin{equation*}
W(z)=\{ (a_1, \ldots, a_N) \in \R^N: \text{ if } z \in \Sh^2, \text{ then }  \sum_{j=1}^N (-1)^{j-1} a_j=0\}.
\end{equation*}
Note that if $z \in \Sh^1$ is  non-singular then $W(z)=\R^N$ and there is a constraint if $z$ is singular. 

\begin{definition}\label{localinterpolate}
Let $z \in \Sh$ and suppose that $\Th(z)=\{ T_1, T_2, \ldots, T_N\}$. We say that $z$ is a local interpolating vertex if for every   $(a_1, \ldots, a_N)  \in W(z)$ there exists  a $\vv \in V_{h, 00}^3(z)$ such that 
\begin{equation}\label{aux134}
\dive \vv |_{T_j} (z) = a_j \quad \text{ for all } 1 \le j \le N. 
\end{equation}
If $z \in \Sh$ is a local interpolating vertex then,  given $a=(a_1, \ldots, a_N)  \in W(z)$ we define $M_a=\{ \vv \in V_{h, 00}^3(z): \vv \text{ satisfying } \eqref{aux134}\}$. Also, we set 
\begin{equation}\label{aux123}
D_z= \max_{(a_1, \ldots, a_N) \in W(z) } \min_{\vv \in M_a}  \frac{\|\nabla \vv\|_{L^\infty(\Omega_h(z))}}{\max_{1 \le j \le N} |a_j|} .
\end{equation}
\end{definition}
We let $\Lh$ be the collection of all local interpolating vertices.  Then Definition \ref{aux123}  says that if $z \in \Lh$ then given $a \in W(z)$ there exists  $ \vv \in V_{h, 00}^3(z)$ satisfying  \eqref{aux134} and
\begin{equation*}
\|\nabla \vv\|_{L^\infty(\Omega_h(z))} \le D_z \max_{1 \le j \le N} |a_j|.
\end{equation*}

In the next section we  identify local interpolating vertices. It will be useful to define fundamental vector fields used in \cite{guzmanscottdegfour}. For every $z \in \Sh$  and $e \in \Eh(z)$ with $e=\{ z, y\}$ define
\begin{equation}\label{eqn:sighdef}
\eta_e^z=  \psi_z^2 \psi_y .
\end{equation}
Let $T_1$ and $T_2$ be the two triangles that have $e$ as an edge.
Then we can easily verify the following (see also \cite{guzmanscottdegfour}):
\begin{subequations}\label{eta}
\begin{alignat}{1}
& \text{ supp } \eta_e^z  \subset T_1 \cup T_2 ,   \label{eta1} \\
& \nabla \eta_e^z (\sigma )=0 \quad \text{ for  } 
  \sigma \in \Sh  \text{ and }   \sigma \neq z. \label{eta2} \\
  & \|\nabla \eta_e^z\|_{L^\infty(T_1 \cup T_2)} \le \frac{C}{|e|}  \label{eta3}.
\end{alignat}
\end{subequations}
We then define the vector fields  
\begin{equation}\label{wei}
\ww_e^z=|e| \tv_e^z \eta_e^z.
\end{equation}
The following lemma is proved in \cite{guzmanscottdegfour}.
\begin{lemma}
Let $z \in \Sh$  and $e \in \Eh(z)$ with $e=\{ z, y\}$ and denote the two triangles 
that have $e$ as an  edge as $T_1$ and $T_2$. 
Let $\ww_e^z$ be given by \eqref{wei}. Then
\begin{subequations}\label{w}
\begin{alignat}{1}
& \ww_e^z \in \V_{h,00}^3(z),   \label{w1} \\
& \text{ supp } \ww_e^z \subset T_1 \cup T_2, \text{ and }\dive \ww_{e}^z|_{T_s} (z)= 1 \quad \text{ for } s=1,2 \label{w6}\\
& \|\nabla \ww_e^z\|_{L^\infty(T_1 \cup T_2 )} \le  C. \label{w7} 
\end{alignat}
\end{subequations}
The constant $C$ only depends on the shape regularity of $T_1$ and $T_2$.
\end{lemma}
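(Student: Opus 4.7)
The argument is a direct computation using the structural properties of $\eta_e^z = \psi_z^2\psi_y$ collected in \eqref{eta1}--\eqref{eta3}. First I would observe that $\ww_e^z = |e|\,\tv_e^z\,\eta_e^z$ is a constant vector times a product of three piecewise linear nodal functions, hence piecewise cubic, continuous on $\Omega$, and vanishing on $\partial\Omega$; so $\ww_e^z \in \V_h^3$. By \eqref{eta1} its support lies in $T_1\cup T_2$, which gives the support statement in \eqref{w6}.

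To get the remaining pointwise identity in \eqref{w6} and the non-trivial part of \eqref{w1}, I would compute
\[
\dive \ww_e^z = |e|\,\tv_e^z\cdot\nabla\eta_e^z, \qquad \nabla\eta_e^z = 2\psi_z\psi_y\nabla\psi_z + \psi_z^2\nabla\psi_y.
\]
At the vertex $z$ the first term vanishes because $\psi_y(z)=0$, leaving $\nabla\psi_y|_{T_s}$; by \eqref{tpsi} and linearity on $T_s$, $\tv_e^z\cdot\nabla\psi_y = 1/|e|$ throughout $T_s$, so $\dive\ww_e^z|_{T_s}(z)=1$. The same formula for $\dive\ww_e^z$ together with \eqref{eta2} yields $\dive\ww_e^z(\sigma)=0$ for every other vertex $\sigma\in\Sh$. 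The mean-zero condition $\int_T \dive\ww_e^z\,dx = 0$ for $T\in\Th(z)$ is trivial for $T\notin\{T_1,T_2\}$ by the support property, and for $T=T_s$ follows from the divergence theorem: on $e$ the factor $\tv_e^z$ is perpendicular to the outward normal, and on the other two edges of $T_s$ the scalar $\eta_e^z$ vanishes, because one edge is opposite $z$ (so $\psi_z=0$ there) and the other opposite $y$ (so $\psi_y=0$ there). This establishes \eqref{w1}.

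Finally, the Lipschitz bound \eqref{w7} drops out of $\nabla\ww_e^z = |e|\,\tv_e^z\otimes\nabla\eta_e^z$ combined with \eqref{eta3}: the $|e|$ cancels the $1/|e|$, leaving a constant that depends only on the shape regularity of $T_1$ and $T_2$. There is no serious obstacle here; the lemma is essentially bookkeeping, and the only point deserving care is the boundary integral on $T_s$, where one needs both the tangential alignment of $\ww_e^z$ with $e$ and the vanishing of $\psi_z^2\psi_y$ on the two edges of $T_s$ incident to only one of $\{z,y\}$.
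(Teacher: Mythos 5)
Your proof is correct and follows the intended route: the paper defers this lemma to \cite{guzmanscottdegfour}, but the argument there is exactly this direct verification built on the properties \eqref{eta} of $\eta_e^z$, the tangential-derivative identity \eqref{tpsi}, and the divergence theorem with $\tv_e^z\perp\n$ on $e$ and $\eta_e^z$ vanishing on the other two edges of each $T_s$. All the key points (the pointwise value $1$ at $z$ on both triangles, the vanishing at the other vertices via \eqref{eta2}, the zero means, and the cancellation of $|e|$ against the $1/|e|$ in \eqref{eta3}) are handled correctly.
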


\subsection{Schematic for interior vertex}\label{scheme}

It will be useful to use the following notation for an interior vertex 
$z \in \Sh^{\text{int}}$. 
We assume that $\Th(z)= \{ T_1, T_2, \ldots, T_N\}$,
enumerated as before so that $T_j, T_{j+1}$ share an edge for $j=0, \ldots N-1$ 
and we identify $T_0$ as $T_N$ (indices modulo $N$).
For $1 \le i \le  N$ we let $e_i$ be the edge shared by $T_i$ and $T_{i+1}$ and $e_N$ is the edge shared by $T_N$ and $T_{1}$. 
We let $y_1, y_2, \ldots, y_N $ be the vertices  such that $e_i=\{z, y_i\}$. 
We  set $y_0=z$. 
Also, $\n_i$ is unit-normal to $e_i$ pointing out of $T_i$ and $\theta_i$ is 
the angle formed by the two edges of $T_i$ emanating from $z$.  
Moreover, let $\tv_i$ be the unit-tangent vector to $e_i$ pointing towards $y_i$.  
The edge opposite to  $z$ belonging to  $T_i$ is denoted by $f_i$. 
The unit-normal to $f_i$ pointing out of $T_i$ is denoted $\m_i$. 
\begin{equation}\label{eqn:emmeye}
\m_i=|f_i|^{-1} (y_i-y_{i-1})^\perp=|y_i-y_{i-1}|^{-1} (y_i-y_{i-1})^\perp.
\end{equation}
Finally, $h_i=\text{dist}(z, f_i)$. The notation $\cdot^\perp$  denotes rotation by $90$ degrees counter clockwise.  See Figure \ref{fig:scheme} for an illustration. 

Using this notation, we will use the shorthand notation $\psi_i= \psi_{y_i}$ for $0 \le i \le N$ and $\ww_i= \ww_{e_i}^z$ for $1 \le i \le N$.

\begin{figure}
\centerline{\qquad\includegraphics[scale=.8]{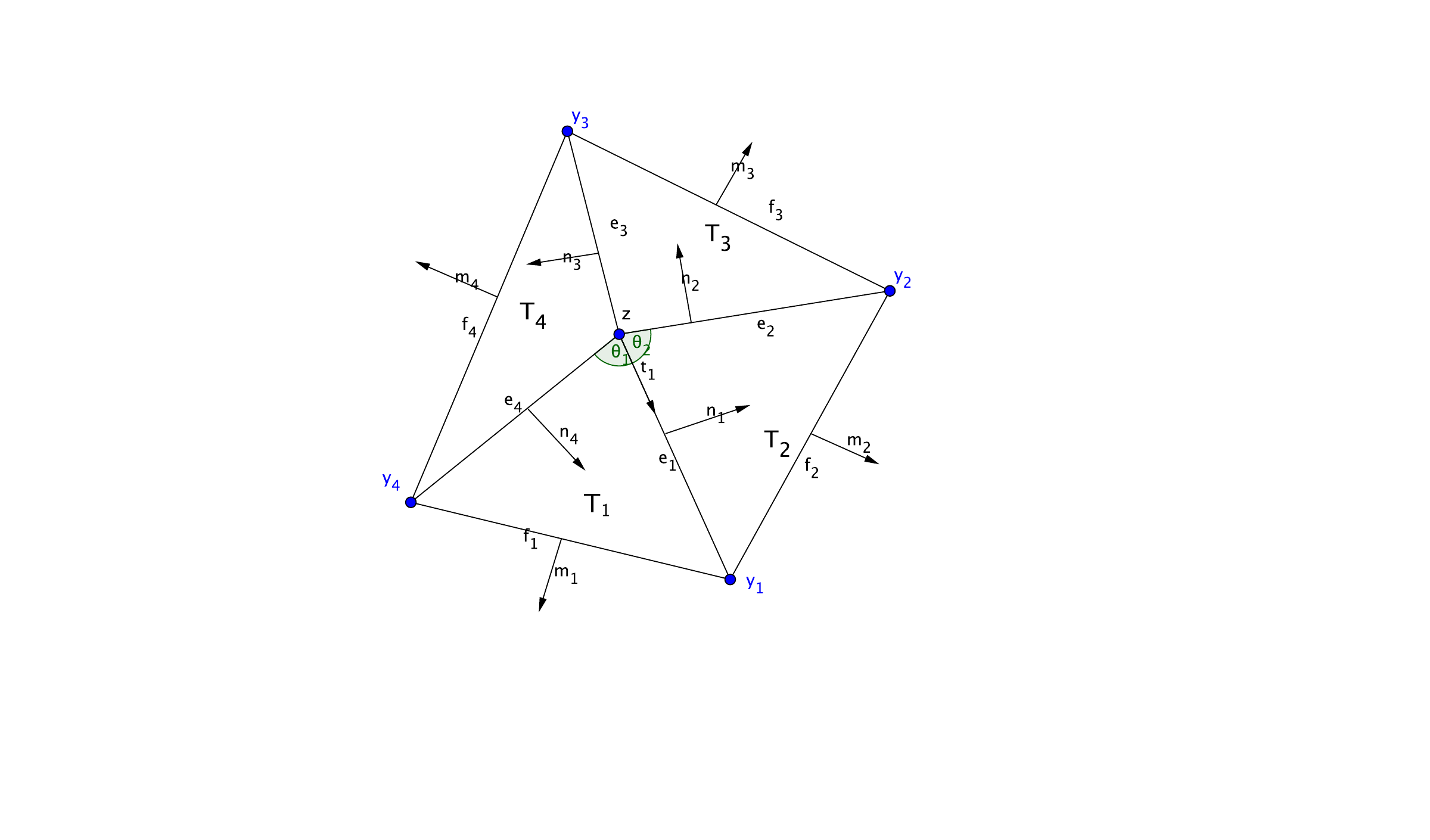}}
\footnotesize
\ra{1.1}
\vspace{-40pt}
\caption{Example of $\Th(z)$ with $N=4$. }
\label{fig:scheme}
\end{figure}

\subsection{ Singular vertices are local interpolating vertices: $\Sh^2 \subset \Lh$}
In this section we recall that all singular vertices are local interpolating vertices. The proof can be found in \cite{guzmanscottdegfour}, but we recall some of the details. 

\begin{lemma}\label{lemma0}
It holds $\Sh^2 \subset \Lh$.  Moreover, there exists a constant $C_{\text{sing}}$ such that
\begin{equation*}
D_z \le C_{\text{sing}} \qquad \text{ for all } z \in \Sh^2,
\end{equation*}
where $C_{\text{sing}}$ only depends on the shape regularity of the mesh. 
\end{lemma}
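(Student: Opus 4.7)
The plan is to build the interpolating velocity as a linear combination $\vv = \sum_{i} c_i \ww_i$ of the fundamental fields defined in \eqref{wei}, with the sum running over the edges $e_i \in \Eh(z)$. Each $\ww_i = \ww_{e_i}^z$ lies in $\V_{h,00}^3(z)$ by \eqref{w1} and is supported in $T_i \cup T_{i+1}$ with $\dive \ww_i|_{T_i}(z) = \dive \ww_i|_{T_{i+1}}(z) = 1$ by \eqref{w6}, so any such combination satisfies
\[
\dive \vv|_{T_j}(z) = c_{j-1} + c_j,
\]
with indices read cyclically modulo $N$ when $z$ is interior, and with the convention $c_0 = c_N = 0$ (only $\ww_1,\dots,\ww_{N-1}$ are used) when $z$ is a boundary vertex.

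The second step is to solve $c_{j-1} + c_j = a_j$. For an interior singular vertex, the classification of singular angles forces $N=4$, and the resulting cyclic $4\times 4$ system has rank $3$ with image exactly the hyperplane $a_1 - a_2 + a_3 - a_4 = 0$; this is precisely the compatibility condition defining $W(z)$ via \eqref{eqn:ehaitchzeeq}. For a boundary singular vertex, the non-cyclic system is lower bidiagonal and direct back-substitution forces the same alternating-sum constraint $\sum_{j=1}^N (-1)^{j-1} a_j = 0$. In both cases the system is solvable for every $a \in W(z)$, and a solution can be chosen with $\max_j |c_j| \le C_N \max_j |a_j|$ for a purely combinatorial constant $C_N$.

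The final step bounds $D_z$ via \eqref{w7}: since $\|\nabla \ww_i\|_{L^\infty} \le C$ with $C$ depending only on the shape regularity of the two triangles sharing $e_i$, we obtain
\[
\|\nabla \vv\|_{L^\infty(\Omega_h(z))} \le \sum_i |c_i|\, \|\nabla \ww_i\|_{L^\infty(\Omega_h(z))} \le C_{\text{sing}} \max_j |a_j|,
\]
and Definition~\ref{localinterpolate} then gives $D_z \le C_{\text{sing}}$ uniformly across $z \in \Sh^2$.

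The crucial point — and what makes the argument work cleanly precisely for singular vertices — is the exact match between the one-dimensional cokernel of the sum-of-neighbours operator $(c_i) \mapsto (c_{j-1}+c_j)_j$ and the one-dimensional constraint cutting out $W(z)$ from $\R^N$ for singular $z$. This compatibility lets the fields $\{\ww_i\}$ alone interpolate every admissible datum, so no additional basis elements are needed. At non-singular vertices the same operator is surjective but $W(z)=\R^N$, so the ansatz is underdetermined and extra building blocks must be introduced; this mismatch is the real obstacle and is what the subsequent subsections are devoted to overcoming.
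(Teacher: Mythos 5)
Your proposal is correct and follows essentially the same route as the paper: it builds $\vv$ as a combination of the edge fields $\ww_{e}^z$ from \eqref{wei}, observes that the resulting vertex values are the nearest-neighbour sums $c_{j-1}+c_j$, and uses that the one-dimensional obstruction to solving this system is exactly the alternating-sum constraint defining $W(z)$ at a singular vertex (the paper solves the same system by the explicit back-substitution $b_j=a_j-b_{j-1}$ with $N=4$ in the interior case and treats the boundary case as analogous). Your linear-algebra framing of the cyclic versus bidiagonal systems and the explicit boundary case are just a more systematic write-up of the same argument, with the same bound via \eqref{w7}.
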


\begin{proof}

We only consider interior singular vertices for simplicity (the proof for boundary singular vertices is similar).  Suppose that $z \in \Sh^{2,\text{int}}$ and  we use the notation in Section \ref{scheme}. Note that $N=4$. Let $a=(a_1, \ldots, a_4) \in W(z)$. First define $b_1=a_1$ and inductively define
\begin{equation*}
b_{j}=a_j-b_{j-1} \qquad \text{ for } j=2,3. 
\end{equation*}
Then define
\begin{equation*}
\vv= b_1 \ww_1+ b_2 \ww_2+ b_3 \ww_3.
\end{equation*}
By \eqref{w1}, $\vv \in \V_{h,00}^3(z)$. 
Using \eqref{w6} we see that 
\begin{equation*}
\dive \vv|_{T_j}(z)=a_j \text{ for } 1 \le j \le 3.
\end{equation*}
We also have 
\begin{equation*}
\dive \vv|_{T_4} (z)= b_3=a_3-a_2+a_1=a_4,
\end{equation*}
where we used that $(a_1, a_2, a_3, a_4)  \in W(z)$.
Moreover, using \eqref{w7} we have
\begin{equation*}
\|\nabla \vv\|_{L^\infty(\Omega_h(z))} \le C \, (b_1+b_2+b_3) \le C \max_{1 \le j \le 4} |a_j|,
\end{equation*}
where the constant $C$ only depends on the shape regularity constant. 
\end{proof}

\subsection{Interior vertices with odd number of triangles are local interpolating vertices}

In this section we prove that if $z \in \Sh^{\text{int}}$ and  $\Th(z) $ has 
an odd number of triangles then $z \in \Lh$. 
\begin{lemma} \label{odd}
Let $z \in \Sh^{\text{int}}$ with $\Th(z)=\{T_1, \ldots, T_N\}$ and suppose that $N$ is 
odd.
Then $z \in \Lh$. 
Moreover, there exists a constant $C_{\text{odd}}$ such that 
\begin{equation*}
D_z \le C_{\text{odd}}.
\end{equation*}
Here $C_{\text{odd}}$ is a fixed constant that only depends on the shape regularity of the mesh. 
\end{lemma}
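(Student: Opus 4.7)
The plan is to build $\vv$ as a linear combination of the edge-based vector fields $\ww_i = \ww_{e_i}^z$ and to use the parity of $N$ to show that the resulting linear system is always solvable. Because $N$ is odd, $z$ cannot be singular (interior singular vertices necessarily have $N=4$), so $W(z)=\R^N$ and no constraint is imposed on $(a_1,\ldots,a_N)$.

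First I would fix the ansatz $\vv = \sum_{i=1}^{N} c_i \ww_i$ for scalar coefficients $c_i$ to be determined. By \eqref{w1} each $\ww_i \in \V_{h,00}^3(z)$, so $\vv \in \V_{h,00}^3(z)$ for any choice of $c_i$; in particular supp $\vv \subset \Omega_h(z)$, $\int_T \dive\vv \, dx = 0$ on every $T \in \Th(z)$, and $\dive \vv(\sigma) = 0$ for $\sigma \in \Sh\setminus\{z\}$. By \eqref{w6}, $\ww_i$ contributes to $\dive \vv|_{T_j}(z)$ only when $j\in\{i,i+1\}$ (indices mod $N$), and in those cases the contribution is $c_i$. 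Therefore the requirement \eqref{aux134} is equivalent to the cyclic linear system
\begin{equation*}
c_{j-1} + c_j = a_j, \qquad j=1,\ldots,N,\quad c_0 := c_N.
\end{equation*}

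The main point is to solve this system when $N$ is odd. I would set $c_N=t$ as a free parameter and back-substitute to get $c_j = \sum_{k=1}^{j}(-1)^{j-k}a_k + (-1)^j t$. Closing the loop at $j=N$ gives the consistency condition $t = \sum_{k=1}^{N}(-1)^{N-k}a_k + (-1)^N t$. With $N$ odd this collapses to
\begin{equation*}
t \;=\; \tfrac{1}{2}\sum_{k=1}^{N}(-1)^{N-k}a_k,
\end{equation*}
so the system admits the unique solution with this value of $c_N$, and from the explicit back-substitution formula one immediately gets $|c_j| \le N \max_{1\le k \le N}|a_k|$.

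To finish I would combine these bounds with \eqref{w7}: since $\|\nabla \ww_i\|_{L^\infty(\Omega_h(z))} \le C$ with $C$ depending only on shape regularity, we obtain
\begin{equation*}
\|\nabla \vv\|_{L^\infty(\Omega_h(z))} \le \sum_{i=1}^N |c_i| \, \|\nabla \ww_i\|_{L^\infty(\Omega_h(z))} \le C_{\text{odd}} \max_{1\le k \le N}|a_k|,
\end{equation*}
with $C_{\text{odd}}$ depending only on shape regularity and on an upper bound for $N$ (which is itself controlled by shape regularity). This gives $z\in \Lh$ and the stated bound on $D_z$. There is no genuine obstacle here: the only subtle point is the parity, which is precisely what makes the circulant matrix $I+P$ (where $P$ is the cyclic shift) invertible — for even $N$, the eigenvalue $1+\omega^{N/2}=0$ obstructs solvability and forces the additional machinery developed later in the paper.
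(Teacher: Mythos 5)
Your proposal is correct and is essentially the paper's argument: both build $\vv$ in the span of the edge fields $\ww_1,\dots,\ww_N$ using \eqref{w1}, \eqref{w6}, \eqref{w7}, and both exploit the oddness of $N$ to invert the cyclic relation $c_{j-1}+c_j=a_j$, with $N$ bounded by shape regularity so the constant is uniform. The only difference is bookkeeping: the paper packages the same back-substitution as a dual basis $\vv_1=\tfrac12\sum_j(-1)^{j-1}\ww_j$, $\vv_i=\ww_i-\vv_{i-1}$ with $\dive\vv_i|_{T_j}(z)=\delta_{ij}$, whereas you solve the circulant system for the coefficients directly.
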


\begin{proof}
We use the notation in Section \ref{scheme}.  We start by defining some auxiliary functions. First, define
\begin{equation*}
\vv_1= \frac{1}{2} \sum_{j=1}^N (-1)^{j-1} \ww_j. 
\end{equation*}
We see that $\vv_1 \in \V_{h,00}^3(z)$, by \eqref{w1}. 
Moreover, using \eqref{w6} we see that 
\begin{equation*}
\dive \vv_1|_{T_j}(z)= \delta_{1j} \text{ for } 1 \le j \le N,
\end{equation*}
where $\delta_{ij}$ is the Kronecker delta function.  
Note that here we used crucially that $N$ is odd. Moreover, we have by \eqref{w7} that
\begin{equation*}
\|\nabla \vv_1\|_{L^\infty(\Omega_h(z))} \le  C,
\end{equation*}
where $C$ only depends on the shape regularity.
Next, we define inductively
\begin{equation*}
\vv_i=\ww_i-\vv_{i-1} \quad \text{ for } 2 \le i \le N.
\end{equation*}
Then, we easily see that $\vv_i \in \V_{h, 00}^3(z)$ and 
\begin{equation}\label{aux31}
\dive \vv_i|_{T_j}(z)= \delta_{ij} \quad \text{ for } 1 \le i,j \le N, 
\end{equation}
and, furthermore, 
\begin{equation}\label{aux32}
\|\nabla \vv_i\|_{L^\infty(\Omega_h(z))} \le  C \quad \text{ for } 1 \le i,j \le N, 
\end{equation}
where $C$ only depends on the shape regularity. 
Now given $a=(a_1, \ldots, a_N) \in W(z)$ we set 
\begin{equation*}
\vv=\sum_{j=1}^N a_j \vv_j.
\end{equation*}
Then, using \eqref{aux31} we get \eqref{aux134}. 
Moreover, using  \eqref{aux32} we get
\begin{equation*}
\|\nabla \vv\|_{L^\infty(\Omega_h(z))} \le  C_{\text{odd}} \max_{ 1 \le  j \le N} |a_j|,
\end{equation*}
where  $C_{\text{odd}}$ is a fixed constant only depending on the shape 
regularity of the mesh. 
\end{proof} 

\subsection{ Interior vertices with even number of triangles}

If $z \in \Sh^{\text{int}}$ is non-singular and has an even number of triangles 
containing it, then it is not necessarily the case that $z \in \Lh$. 
In this section we give sufficient conditions for $z \in \Lh$.  
We use the notation in Section \ref{scheme}. 
To do this, in addition to the vector fields $\ww_i$, we will need other vector fields.  
We start with
\begin{equation}\label{eqn:startwith}
\vc_i=\frac{12}{|e_i|} \eta_{e_i}^z \n_i 
     =\frac{12}{|e_i|} \psi_z^2 \psi_{y_i} \n_i 
\quad \text{ for } 1 \le i \le N.
\end{equation}
In the following lemma, the indices are calculated modulo $N$.
\begin{lemma}
It holds, for $1 \le i \le N$
\begin{subequations}
\begin{alignat}{1}
\text{supp} (\vc_i)  \subset & T_{i} \cup  T_{i+1}, \label{chi1}  \\
(\dive \vc_i)(y_j)&=0 \quad \text{ for } 1 \le j \le N,  \label{chi2}\\
(\dive \vc_i)|_{T_{i}} (z)= \frac{12 \cot(\theta_i)}{|e_i|^2}, & \quad  
(\dive \vc_i)|_{T_{i+1}} (z)= \frac{-12\cot(\theta_{i+1})}{|e_i|^2}, \label{chi3} \\
\int_{T_i} \dive \vc_i \, dx= 1, & \quad 
\int_{T_{i+1}} \dive \vc_i \, dx= -1, \label{chi4} \\
\|\nabla \vc_i\|_{L^\infty(T_i \cup T_{i+1})} \le & \frac{C}{|e_i|^2} . \label{chi5}
\end{alignat}
\end{subequations}
\end{lemma}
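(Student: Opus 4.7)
The plan is to verify the five identities directly from the definition $\vc_i = (12/|e_i|)\,\psi_z^2\psi_{y_i}\,\n_i$, exploiting the fact that $\n_i$ is a constant unit vector so that
\begin{equation*}
\dive \vc_i = \frac{12}{|e_i|}\, \n_i\cdot\bigl(2\psi_z\psi_{y_i}\nabla\psi_z + \psi_z^2 \nabla\psi_{y_i}\bigr).
\end{equation*}
The subsequent steps all amount to specializing this expansion.

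Three of the five claims are essentially immediate. Claim \eqref{chi1} follows because $\psi_z^2 \psi_{y_i}$ vanishes outside the triangles sharing the edge $e_i=\{z,y_i\}$, which are precisely $T_i$ and $T_{i+1}$. For \eqref{chi2}, every term in the expansion of $\dive\vc_i$ carries a factor of $\psi_z$, which vanishes at each $y_j$ with $j\ge 1$; outside $T_i\cup T_{i+1}$ the field $\vc_i$ is identically zero, so the piecewise value from any side is $0$. The gradient bound \eqref{chi5} is routine: $\|\nabla\psi_z\|_{L^\infty(T_i\cup T_{i+1})}$ and $\|\nabla\psi_{y_i}\|_{L^\infty(T_i\cup T_{i+1})}$ are both of order $1/|e_i|$ by \eqref{nablapsi}, \eqref{hTj}, and shape regularity, while $|\psi_z|,|\psi_{y_i}|\le 1$, so the product rule yields the stated bound.

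For the integral identities \eqref{chi4} I would invoke the divergence theorem on $T_i$. Since $\psi_z$ vanishes on $f_i$ and $\psi_{y_i}$ vanishes on $e_{i-1}$, the only boundary contribution is on $e_i$, where $\vc_i\cdot\n_i = (12/|e_i|)\,\psi_z^2\psi_{y_i}$. Parametrizing $e_i$ from $z$ to $y_i$ reduces the edge integral to a one-variable beta-type integral that evaluates to $|e_i|/12$, which cancels the prefactor and gives $1$. On $T_{i+1}$ the outward normal along $e_i$ is $-\n_i$, producing the $-1$.

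The step that requires genuine care is \eqref{chi3}. At $z$ we have $\psi_z(z)=1$ and $\psi_{y_i}(z)=0$, so only the second term in the expansion above survives, leaving $\dive\vc_i|_{T_i}(z) = (12/|e_i|)\,\n_i\cdot\nabla\psi_{y_i}|_{T_i}$. My plan is to set up local coordinates with $z$ at the origin and $e_i$ along the positive $x$-axis, so that $\n_i=(0,1)$ (with $T_i$ lying in the lower half-plane). In $T_i$ the zero level set of $\psi_{y_i}$ is the opposite edge $e_{i-1}$, which meets $e_i$ at angle $\theta_i$; by \eqref{nablapsi}-\eqref{hTj}, $\nabla\psi_{y_i}|_{T_i}$ is perpendicular to $e_{i-1}$ with magnitude $1/(|e_i|\sin\theta_i)$, and projecting onto $\n_i$ introduces a factor of $\cos\theta_i$, giving $12\cot\theta_i/|e_i|^2$. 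The same geometric argument on $T_{i+1}$ uses the angle $\theta_{i+1}$, but because $\n_i$ is the inward rather than outward normal to $T_{i+1}$, an overall sign reversal produces $-12\cot\theta_{i+1}/|e_i|^2$. The orientation bookkeeping — making sure the correct sign of $\cos$ appears on each side — is where I expect the main difficulty to lie.
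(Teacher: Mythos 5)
Your proposal is correct and follows essentially the same route as the paper: direct verification from the definition, with the product rule giving \eqref{chi1}, \eqref{chi2}, \eqref{chi5}, the divergence theorem (only the $e_i$ contribution surviving) giving \eqref{chi4}, and the identity $\nabla\psi_{y_i}|_{T_i}\cdot\n_i=\cot(\theta_i)/|e_i|$ (which the paper states as $\nabla\psi_{y_i}|_{T_i}=\frac{1}{\sin(\theta_i)|e_i|}\n_{i-1}$ with $\n_{i-1}\cdot\n_i=\cos\theta_i$) giving \eqref{chi3}, including the sign flip on $T_{i+1}$ because $\n_i$ points into that triangle. Your coordinate bookkeeping for \eqref{chi3} and the beta-integral value $\int_{e_i}\psi_z^2\psi_{y_i}\,ds=|e_i|/12$ are both right, so no gap remains.
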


\begin{proof}
It follows from the definition \eqref{eqn:sighdef} of $\eta_{e_i}^z$ 
that \eqref{chi1} and \eqref{chi2} hold.
A simple calculation using \eqref{nablapsi} and \eqref{hTj} shows that
$$
\nabla\psi_{y_i}|_{T_i}=\frac{1}{\sin(\theta_i) |e_i|}\n_{i-1}.
$$
Thus
\begin{equation*}
(\dive \vc_i)|_{T_{i}}(z)=\frac{12}{|e_i|} \psi_z^2(z) \nabla\psi_{y_i}|_{T_i}\cdot\n_i
= \frac{12}{\sin(\theta_i) |e_i|^2} \n_{i-1} \cdot \n_{i}
= \frac{12\cot(\theta_i) }{|e_i|^2} .
\end{equation*}
Similarly, we can show that 
\begin{equation*}
(\dive \vc_i)|_{T_{i+1}} (z)= \frac{-12\cot(\theta_{i+1})}{|e_i|^2}.
\end{equation*}
To show \eqref{chi4} we use integration by parts and use that $\eta_{e_i}^z$ 
vanishes on $\partial T_{i} \backslash e_i$ to get
\begin{equation*}
\int_{T_i} \dive \vc_i  \,dx= 12 \int_{e_i} \eta_{e_i} \,ds= 1. 
\end{equation*}
Similarly, we can show that 
\begin{equation*}
\int_{T_{i+1}} \dive \vc_i  \,dx= -1. 
\end{equation*}
To prove \eqref{chi5} we use the definition of $\vc_i$ and the bound \eqref{eta3}.
\end{proof}

Note that $\vc_i$ is not in $\V_{h,0}^3(z)$ by \eqref{chi4}. 
However, again using  \eqref{chi4}, we see that 
\begin{equation*}
\vc:=\vc_1+ \vc_2+ \cdots + \vc_N,
\end{equation*}
does belong to  $\V_{h,0}^3(z)$. 
In fact, using \eqref{chi2} we have that $\vc \in \V_{h,00}^3(z)$. 
We collect it in the following result.
\begin{lemma}
It holds that $\vc \in \V_{h,00}^3(z)$ and
\begin{alignat}{1}
(\dive \vc)|_{T_j} (z)
&=12 \cot(\theta_j) \Big(\frac{1}{|e_j|^2}-\frac{1}{|e_{j-1}|^2}\Big)  
\quad \text{ for } 1 \le j \le N, \label{chiz} \\
\|\nabla \vc\|_{L^\infty(\Omega_h(z))} & \le  \frac{C}{h_z^2}.  \label{chiz2}
\end{alignat}
\end{lemma}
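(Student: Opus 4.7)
The plan is to read off all four assertions directly from the previously established properties of the $\vc_i$'s, with indices handled modulo $N$ as in Section \ref{scheme}.

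First, I would address membership in $\V_{h,00}^3(z)$. Support in $\Omega_h(z)$ is immediate from \eqref{chi1} and the fact that each $T_i \cup T_{i+1} \subset \Omega_h(z)$. Continuity and vanishing on $\partial\Omega$ are inherited from the cubic bubbles $\eta_{e_i}^z = \psi_z^2 \psi_{y_i}$. The mean-zero-on-each-triangle condition is the key local bookkeeping: for a fixed triangle $T_j \in \Th(z)$, by \eqref{chi1} only the two summands $\vc_{j-1}$ and $\vc_j$ have support meeting the interior of $T_j$, and \eqref{chi4} gives
\begin{equation*}
\int_{T_j} \dive \vc \,dx = \int_{T_j} \dive \vc_{j-1} \,dx + \int_{T_j} \dive \vc_j \,dx = (-1) + 1 = 0.
\end{equation*}
So $\vc \in \V_{h,0}^3(z)$. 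Then \eqref{chi2} applied term-by-term yields $(\dive \vc)(y_j)=0$ for all $j$, while $z$ itself is excluded by definition, so in fact $\vc \in \V_{h,00}^3(z)$.

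Next, for the formula \eqref{chiz}, I would again use that only $\vc_{j-1}$ and $\vc_j$ are nonzero on $T_j$. Evaluating at $z$ and applying the two equalities in \eqref{chi3} gives
\begin{equation*}
(\dive \vc)|_{T_j}(z) = (\dive \vc_{j-1})|_{T_j}(z) + (\dive \vc_j)|_{T_j}(z) = \frac{-12 \cot(\theta_j)}{|e_{j-1}|^2} + \frac{12 \cot(\theta_j)}{|e_j|^2},
\end{equation*}
which is precisely \eqref{chiz}. Note it is important that in \eqref{chi3} the triangle with index $i{+}1$ uses the angle $\theta_{i+1}$, so when $T_{j}$ plays the role of $T_{(j-1)+1}$, the cotangent factor is indeed $\cot(\theta_j)$, matching the contribution from $\vc_j$ on $T_j$.

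Finally, \eqref{chiz2} follows from the triangle inequality and \eqref{chi5}: on any triangle $T \in \Th(z)$ at most two of the $\vc_i$'s contribute, so
\begin{equation*}
\|\nabla \vc\|_{L^\infty(\Omega_h(z))} \le 2 \max_{1 \le i \le N} \|\nabla \vc_i\|_{L^\infty(T_i \cup T_{i+1})} \le 2 \max_{1 \le i \le N} \frac{C}{|e_i|^2} \le \frac{C'}{h_z^2},
\end{equation*}
where in the last step I use shape regularity to replace each $|e_i|^{-1}$ by a constant multiple of $h_z^{-1}$. The main obstacle is purely notational: keeping the cyclic indexing straight so that the two contributing $\vc_i$'s on each triangle are correctly identified as $\vc_{j-1}$ and $\vc_j$. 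All substantive work has already been done in establishing \eqref{chi1}--\eqref{chi5} for the individual $\vc_i$.
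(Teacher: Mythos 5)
Your proof is correct and follows the same route as the paper: membership in $\V_{h,00}^3(z)$ via the telescoping of \eqref{chi4} and the vertex condition \eqref{chi2}, the formula \eqref{chiz} by summing the two contributions from \eqref{chi3} on each $T_j$, and the bound \eqref{chiz2} from \eqref{chi5} together with shape regularity giving $|e_i|\gtrsim h_z$. The paper merely states these steps without detail, so your write-up is simply a fuller version of the same argument.
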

The inequality \eqref{chiz2} follows from \eqref{chi5}.

So far, we have $\ww_1, \ldots, \ww_N$ and $\vc$ that belong to $\V_{h,00}^3(z)$. 
Next we describe two more functions that also belong to the space. 

We let $\cde_1=[1,0]^t$ and $\cde_2=[0,1]^t$ be canonical directions. 
We then define
\begin{equation*}
\tilde{\vx}_{i}:=\psi_z^2 \cde_i \quad \text{ for } i=1,2.
\end{equation*}
The following result can easily be proven.

\begin{lemma}
It holds, for $i=1,2$ 
\begin{subequations}\label{xi}
\begin{alignat}{1}
\tilde{\vx}_i \in & \V_h^3(z)  \label{xi0} \\
(\dive \tilde{\vx}_i)(y_j)&= 0\quad \text{ for } 1 \le  j \le N  \label{xi1} \\\
(\dive \tilde{\vx}_i)|_{T_j} (z)&=\frac{3}{|T_j|} b_{ji} \quad 
        \text{ for } 1 \le j \le N \label{xi2} \\
\int_{T_j} \dive \tilde{\vx}_i \, dx&= b_{ji} \quad\text{ for } 1\le j\le N, \label{xi3}
\end{alignat}
\end{subequations}
where $|T_j|$ denotes the area of $T_j$, and using \eqref{eqn:emmeye} we get \Rd
\begin{equation}\label{eqn:beejayeye}
b_{ji}=\frac{-|f_j|\m_j \cdot \cde_i }{3} 
      =\frac{-(y_{j}- y_{j-1})^\perp  \cdot \cde_i }{3}
      =\frac{-(y_{j}- y_{j-1}) \cdot \cde_i^\perp  }{3},
\end{equation}
\Bk
where $\vv^\perp$ denotes the rotation of $\vv$ by 90 degrees counter clockwise. Moreover, the following bound holds
\begin{equation}
\|\nabla \tilde{\vx}_i \|_{L^\infty(\Omega_h(z))}  \le  \frac{C}{h_z}.  \label{xi4}
\end{equation}
\end{lemma}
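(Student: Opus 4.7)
The strategy is a direct computation: every claim reduces to the identity $\dive \tilde{\vx}_i = 2\psi_z\,\partial_{x_i}\psi_z$ evaluated or integrated on individual triangles, together with standard identities for the nodal basis function at an interior vertex.

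First, for \eqref{xi0}, I would note that $\psi_z^2$ is globally continuous, piecewise quadratic, and vanishes on $\partial\Omega_h(z)$ because $\psi_z$ does (since $z$ is an interior vertex and $\psi_z=0$ at every other vertex of $\Th$). Hence $\tilde{\vx}_i\in[C_0(\Omega)]^2$, is supported in $\Omega_h(z)$, and piecewise belongs to $[P^2(T)]^2\subset[P^3(T)]^2$, so $\tilde{\vx}_i\in\V_h^3(z)$. For \eqref{xi1}, since $\nabla\psi_z$ is piecewise constant and $\psi_z(y_j)=0$, the formula $\dive\tilde{\vx}_i=2\psi_z(\partial_{x_i}\psi_z)$ vanishes at each $y_j$.

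For \eqref{xi2}, I would evaluate the same formula at $z$, where $\psi_z(z)=1$, and invoke \eqref{nablapsi} to write $\nabla\psi_z|_{T_j}=-h_j^{-1}\m_j$. This gives $(\dive\tilde{\vx}_i)|_{T_j}(z)=-2h_j^{-1}\m_j\cdot\cde_i$. Using the triangle area identity $|T_j|=\tfrac{1}{2}|f_j|h_j$, this simplifies to $-|f_j||T_j|^{-1}\m_j\cdot\cde_i=\tfrac{3}{|T_j|}b_{ji}$, which is \eqref{xi2}. The formula for $b_{ji}$ in \eqref{eqn:beejayeye} follows from \eqref{eqn:emmeye}, i.e.\ $|f_j|\m_j=(y_j-y_{j-1})^\perp$, together with the identity $\vv^\perp\cdot\cde_i=-\vv\cdot\cde_i^\perp$.

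For \eqref{xi3}, the quickest route is to observe that $\dive\tilde{\vx}_i|_{T_j}$ is a linear polynomial (because $\psi_z^2$ is quadratic), so its integral equals $|T_j|/3$ times the sum of its values at the three vertices of $T_j$. By \eqref{xi1} the values at $y_{j-1}$ and $y_j$ are zero, and by \eqref{xi2} the value at $z$ is $3b_{ji}/|T_j|$, yielding $\int_{T_j}\dive\tilde{\vx}_i\,dx=b_{ji}$. (As a cross-check, one could instead use the divergence theorem, $\int_e\psi_z^2\,ds=|e|/3$ on the two edges of $T_j$ emanating from $z$, and the closed-polygon identity $|e_{j-1}|\n_{j-1}^{T_j}+|e_j|\n_j^{T_j}+|f_j|\m_j=\mathbf{0}$.)

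Finally, \eqref{xi4} follows from $|\nabla\tilde{\vx}_i|\le 2|\psi_z||\nabla\psi_z|\le 2\|\nabla\psi_z\|_{L^\infty(T_j)}=2/h_j$ on each $T_j\in\Th(z)$, combined with shape regularity, which gives $h_j\ge c\,h_z$. No step should present a genuine obstacle; the only place to be careful is tracking the sign conventions in \eqref{eqn:beejayeye} and matching the scalings of $|f_j|$, $h_j$, and $|T_j|$ consistently in \eqref{xi2}.
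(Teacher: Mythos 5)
Your proof is correct and follows essentially the same route as the paper's: \eqref{xi0} and \eqref{xi1} from the definition of $\psi_z$, the pointwise identity $(\dive \tilde{\vx}_i)|_{T_j}(z)=2\psi_z(z)\nabla\psi_z|_{T_j}\cdot\cde_i$ combined with \eqref{nablapsi} and $|T_j|=\tfrac12 h_j|f_j|$ for \eqref{xi2}, the vertex-value quadrature for the linear function $\dive\tilde{\vx}_i$ on $T_j$ for \eqref{xi3}, and the elementary bound $2\|\nabla\psi_z\|_{L^\infty}\le C/h_z$ for \eqref{xi4}. The extra details you supply (the divergence-theorem cross-check and the shape-regularity step $h_j\ge c\,h_z$) are consistent with, and merely flesh out, the paper's one-line justifications.
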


\begin{proof}
From the definition $\psi_z$ we have that \eqref{xi0} and \eqref{xi1} hold.
To show \eqref{xi2} we have 
\begin{equation*}
(\dive \tilde{\vx}_i)|_{T_j} (z)= 2 \psi_z(z) \nabla \psi_z|_{T_j} \cdot \cde_i
=\frac{-2 \m_j \cdot \cde_i}{h_j}= \frac{- |f_j| \m_j \cdot \cde_i}{ |T_j|}.
\end{equation*}
In the last equation we used that $|T_j|=\frac{h_j |f_j|}{2}$.
Using that $\dive \tilde{\vx}_i$ is linear in $T_j$ and  \eqref{xi1}, \eqref{xi2} we have
\begin{equation*}
\int_{T_j}\dive\tilde{\vx}_i\,dx=\frac{|T_j|}{3}(\dive\tilde{\vx}_i)|_{T_j}(z)=b_{ji}.
\end{equation*}
Finally, \eqref{xi4} follows by a simple computation.
\end{proof}

We note that $\tilde{\vx}_i$ does not belong to $\V_{h,0}^3(z)$ by \eqref{xi3}. 
However, by integration by parts and using that $\tilde{\vx}_i$ vanishes on 
$\partial \Omega_h(z)$ we do have that $\int_{\Omega_h(z)} \dive \tilde{\vx}_i dx=0$
and hence
\begin{equation} \label{eqn:sumbee}
b_{1i}+b_{2i} + \cdots+b_{Ni}=0 \quad \text{ for } i=1,2. 
\end{equation} 
This also follows by summing \eqref{eqn:beejayeye}.

We can now correct $\tilde{\vx}_i$ to make it belong to  $\V_{h,00}^3(z)$.  We define
\begin{equation*}
\vx_i:=\tilde{\vx}_i- c_{1i} \vc_1-  c_{2i} \vc_2- \cdots - c_{N-1\, i} \vc_{N-1} 
\quad \text{ for } i=1,2,
\end{equation*}
where \Rd
$$
c_{j i}= b_{1i}+b_{2i}+ \cdots+ b_{ji}
      =\frac{1}{3}(y_N-y_{j}) \cdot \cde_i^\perp  ,
$$ \Bk
for $j=1, \cdots , N$. 

In the following result, indices are calculated mod $N$,
and in particular $c_{0,i}=c_{N,i}=0$.
\begin{lemma}
It holds, for $i=1,2$, $\vx_i \in \V_{h,00}^3(z)$ and
\begin{alignat}{1}
\dive(\vx_i)|_{T_j} (z)&=\frac{3}{|T_j|} b_{ji}-12 \cot(\theta_j) 
\Big(\frac{c_{ji}}{|e_j|^2}-\frac{c_{j-1\,i}}{|e_{j-1}|^2}\Big)  
\quad \text{ for } 1 \le j \le N,\label{xiz} \\
\|\nabla \vx_i\|_{L^\infty(\Omega_h(z))} & \le \frac{C}{h_z}. \label{xiz2} 
\end{alignat}
\end{lemma}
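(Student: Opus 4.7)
The plan is to verify the three claims by direct substitution, exploiting the fact that the corrector $\vx_i$ is engineered precisely so that the triangle-integral defects of $\tilde{\vx}_i$ recorded in \eqref{xi3} are telescopically killed by the combinations $c_{j,i}\vc_j$. Membership in $\V_{h,00}^3(z)$ splits into three conditions: support inside $\Omega_h(z)$, vanishing of the divergence at all other vertices, and vanishing of the divergence integral on each triangle. The first two are immediate from \eqref{xi0}, \eqref{chi1}, \eqref{xi1}, and \eqref{chi2} applied term by term, with $\dive\vx_i(y_k)=0$ for every $y_k\ne z$.

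For the triangle-integral condition I would combine \eqref{xi3} and \eqref{chi4} to write, under the conventions $c_{0,i}=c_{N,i}=0$,
\[
\int_{T_k}\dive\vx_i\,dx \;=\; b_{k,i}-(c_{k,i}-c_{k-1,i}),\qquad 1\le k\le N.
\]
The telescoping definition $c_{j,i}=b_{1,i}+\cdots+b_{j,i}$ gives $c_{k,i}-c_{k-1,i}=b_{k,i}$ for $1\le k\le N-1$, and the case $k=N$ requires $c_{N,i}=0$, which is precisely the summability identity \eqref{eqn:sumbee} obtained by integrating $\dive\tilde{\vx}_i$ over $\Omega_h(z)$. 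Thus every triangle integral vanishes and $\vx_i\in\V_{h,00}^3(z)$.

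For the divergence values at $z$, the key observation from \eqref{chi3} is that $\dive\vc_k|_{T_j}(z)$ is nonzero only when $k=j$ or $k=j-1$, contributing $12\cot(\theta_j)/|e_j|^2$ and $-12\cot(\theta_j)/|e_{j-1}|^2$ respectively. Combining with \eqref{xi2} and again using the conventions $c_{0,i}=c_{N,i}=0$ collapses the sum to the two-term expression in \eqref{xiz}, valid uniformly across $1\le j\le N$. For the gradient bound \eqref{xiz2} I would use \eqref{xi4} and \eqref{chi5} together with the estimate $|c_{j,i}|\le Ch_z$, which follows directly from the explicit form $c_{j,i}=\tfrac{1}{3}(y_N-y_j)\cdot\cde_i^\perp$ and $|y_N-y_j|\le h_z$. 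Shape regularity supplies $|e_k|\ge ch_z$ and a uniform bound on the valence $N$, so every term $|c_{j,i}|\,\|\nabla\vc_j\|_{L^\infty}$ is $O(1/h_z)$ and the total sum is too.

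The only real pitfall is sign-bookkeeping in \eqref{chi3} and confirming that the boundary conventions $c_{0,i}=c_{N,i}=0$ genuinely make both the triangle-integral identity and the divergence formula uniform across $1\le j\le N$; once that is set up, each of the three conclusions reduces to routine computation.
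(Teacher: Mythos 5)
Your proof is correct and follows essentially the same route as the paper: the corrector is designed so that the telescoping coefficients $c_{ji}$ together with \eqref{eqn:sumbee} cancel the triangle integrals, the formula \eqref{xiz} comes from combining \eqref{xi2} with \eqref{chi3} (only $\vc_j$ and $\vc_{j-1}$ contribute on $T_j$, with the conventions $c_{0,i}=c_{N,i}=0$), and the bound \eqref{xiz2} follows from \eqref{xi4}, \eqref{chi5}, and $|c_{ji}|\le C h_z$ with shape regularity. The paper leaves these verifications implicit, noting only that \eqref{xiz2} follows from \eqref{xi4} and \eqref{chi5}, so your write-up simply supplies the same details in full.
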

The bound \eqref{xiz2} follows from \eqref{xi4} and \eqref{chi5}.

We define for $i=1,2$
\begin{equation}\label{Pvec}
d_{ji}:=(\dive\vx_i)|_{T_j} (z)=\frac{3}{|T_j|} b_{ji}
-12 \cot(\theta_j) \Big(\frac{c_{ji}}{|e_j|^2}-\frac{c_{j-1\,i}}{|e_{j-1}|^2} \Big) 
\quad \text{ for } 1 \le j \le N,
\end{equation}
and 
\begin{equation}\label{Psiz}
d_{j0}:=(\dive\vc)|_{T_j}(z)=\cot(\theta_j) 
\Big(\frac{1}{|e_j|^2}-\frac{1}{|e_{j-1}|^2}
\Big) \quad  \text{ for } 1 \le j \le N.
\end{equation}
Using \eqref{xiz2} and \eqref{chiz2}, we note that for all $1\le j \le N$
\begin{equation}\label{bounddji}
|d_{ji}| \le \frac{C}{h_z^s} \quad \text{ where } s=1 \text{ if } i=1,2 \text{ and } s=2 \text{ if } i=0.
\end{equation}

We can now prove the following important result.
\begin{lemma}\label{even}
Let $z \in \Sh^{1, \text{int}}$ with $\Th(z)=\{ T_1, \ldots, T_N \}$ with $N$ even. Assume that for at least one $i=0,1,2$
\begin{equation}
\decvar_i:=\sum_{j=1}^N (-1)^{j} d_{ji} \neq 0, \label{bigd}
\end{equation}
then $z \in \Lh$. Moreover, in this case,  there exists a constant $C$ depending only 
on the shape regularity constant such that 
\begin{equation}\label{evenbound}
D_z\le C \max_{0 \le i \le 2} \left( 1+ \frac{1}{|\decvar_i|  h_z^s}\right) ,
\end{equation}
where $s=1$ if $i=1,2$ and $s=2$ if $i=0$.
\end{lemma}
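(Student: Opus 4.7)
The plan is to write an arbitrary $a=(a_1,\dots,a_N)\in W(z)=\R^N$ in the form
$$
a_j=\lambda\,d_{j,i^*}+(c_{j-1}+c_j),\qquad c_0:=c_N,
$$
for suitable scalars $\lambda,c_1,\dots,c_N$ and some $i^*\in\{0,1,2\}$. The resulting field $\vv:=\lambda\,\phi_{i^*}+\sum_j c_j\,\ww_j$ (with the shorthand $\phi_0:=\vc$, $\phi_1:=\vx_1$, $\phi_2:=\vx_2$) will then lie in $\V_{h,00}^3(z)$ and satisfy \eqref{aux134}.

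First I would analyze the linear map $F:(c_1,\dots,c_N)\mapsto(c_{j-1}+c_j)_j$ coming from \eqref{w6}. The key observation is that the alternating sum $\sum_j(-1)^{j-1}(c_{j-1}+c_j)$ telescopes to $0$ when $N$ is even, so $\mathrm{Im}(F)\subseteq H:=\{a\in\R^N:\sum_j(-1)^{j-1}a_j=0\}$. The reverse inclusion follows from a cyclic recursion (fix $c_1=0$, set $c_j=\tilde a_j-c_{j-1}$); the closure condition $c_N+c_1=\tilde a_1$ is automatic for $\tilde a\in H$, and the same recursion yields $\|c\|_\infty\le N\,\|\tilde a\|_\infty$. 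Thus $F$ maps $\R^N$ exactly onto $H$, and to realize a general $a$ we need \emph{one} correction direction transverse to $H$.

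Next I would invoke hypothesis \eqref{bigd}: by \eqref{Pvec}--\eqref{Psiz}, the divergence profiles $(d_{j,i})_j$ of the three correctors $\vc$, $\vx_1$, $\vx_2$ have alternating sums $-\decvar_i$, so at least one of them is transverse to $H$. Picking $i^*$ with $\decvar_{i^*}\neq 0$, the choice
$$
A:=\sum_{j=1}^N(-1)^{j-1}a_j,\qquad \lambda:=-\frac{A}{\decvar_{i^*}},\qquad \tilde a_j:=a_j-\lambda\,d_{j,i^*}
$$
gives $\sum_j(-1)^{j-1}\tilde a_j=A+\lambda\decvar_{i^*}=0$, so coefficients $c_j$ with $F(c)=\tilde a$ exist by the previous step and $\vv$ is defined. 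Membership $\vv\in\V_{h,00}^3(z)$ is immediate since every summand lies in this space, and $\dive\vv|_{T_j}(z)=\lambda d_{j,i^*}+c_{j-1}+c_j=a_j$ by construction, proving $z\in\Lh$.

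Finally, I would combine $|\lambda|\le N\|a\|_\infty/|\decvar_{i^*}|$ with the uniform bound $\|d_{\cdot,i^*}\|_\infty\le C/h_z^s$ from \eqref{bounddji} to get $\|\tilde a\|_\infty\le\|a\|_\infty(1+C/(|\decvar_{i^*}|h_z^s))$ and hence $\|c\|_\infty\le C\|a\|_\infty(1+1/(|\decvar_{i^*}|h_z^s))$. Using \eqref{w7}, \eqref{chiz2}, \eqref{xiz2} for the gradients of the building blocks yields
$$
\|\nabla\vv\|_{L^\infty(\Omega_h(z))}\le|\lambda|\,\frac{C}{h_z^s}+C\|c\|_\infty\le C\|a\|_\infty\left(1+\frac{1}{|\decvar_{i^*}|\,h_z^s}\right),
$$
and \eqref{evenbound} follows after optimizing over valid choices of $i^*$ and absorbing the $N$-dependence (bounded by shape regularity) into the constant. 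The main obstacle is the linear-algebra step above: pinning down exactly which hyperplane $H$ the edge fields $\ww_j$ span when $N$ is even, and then reading off from \eqref{Pvec}--\eqref{Psiz} that \eqref{bigd} is precisely the condition for the available correctors to close the one-dimensional gap. The remaining work—the telescoping identity, the cyclic recursion, and tracking the $h_z^{-s}$ factors—is routine bookkeeping.
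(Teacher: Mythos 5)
Your argument is correct and is essentially the paper's proof in different packaging: the paper runs the same cyclic recursion (with the same corrector $\vx_i$ or $\vc$ and the same use of \eqref{bigd}) to build a dual basis $\vv_\ell$ with $\dive \vv_\ell|_{T_j}(z)=\delta_{\ell j}$, whereas you solve the corresponding linear system for $a$ directly. Both rest on the identical two facts --- the fields $\ww_j$ realize exactly the alternating-sum-zero hyperplane when $N$ is even, and \eqref{bigd} makes one of $\vc,\vx_1,\vx_2$ transverse to it --- and the same $h_z^{-s}$ bookkeeping yields \eqref{evenbound}.
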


\begin{proof}
We set $\vx_0=\vc$. 
Let $i$ be such that \eqref{bigd} holds, 
We let $s_1=0$ and define inductively
\begin{equation*}
s_j= d_{j i}-s_{j-1} \quad \text{ for } 2 \le j \le N.
\end{equation*}
Define 
\begin{equation*}
\vv_1=\frac{-1}{\decvar_i}(\vx_i- s_2 \ww_{2} -\cdots -s_{N-1} \ww_{N-1}-s_{N} \ww_{N}).  
\end{equation*}
We easily have that, using \eqref{Pvec}, \eqref{Psiz}
\begin{equation*}
\dive \vv_1|_{T_j}(z)= \frac{-1}{\decvar_i}(d_{ji}-(s_{j-1}+s_j))=0 \quad \text{ for } 2 \le j \le N,
\end{equation*}
and
\begin{equation*}
\dive \vv_1|_{T_1}(z)=\frac{-1}{\decvar_i}(d_{1i}-s_{N}).
\end{equation*}
We see that $s_N=\sum_{j=2}^N (-1)^j d_{ji}$, we have $d_{1i}-s_{N}= -\sum_{j=1}^N (-1)^j d_{ji}= -\decvar_i$. Hence, 
\begin{equation*}
\dive \vv_1|_{T_1}(z)=1.
\end{equation*}

The following bound follows from \eqref{bounddji}, \eqref{xiz2}, \eqref{chiz2} and \eqref{w7}
\begin{equation}\label{vvbound1}
\|\nabla \vv_1\|_{L^\infty(\Omega_h(z))} \le \frac{C}{|\decvar_i|  h_z^s}.
\end{equation}

Then, we define inductively 
\begin{equation*}
\vv_\ell= \ww_\ell-\vv_{\ell-1} \quad \text{ for } 2 \le \ell \le N.
\end{equation*}
We then easily see that
\begin{equation}
\dive \vv_\ell|_{T_j}(z)= \delta_{\ell j} \quad \text{ for } 1 \le \ell,j \le N. 
\end{equation}
Moreover, from \eqref{w7} and \eqref{vvbound1} we have 
\begin{equation*}\label{vvboundi}
\|\nabla \vv_\ell\|_{L^\infty(\Omega_h(z))} \le C\left( 1+ \frac{1}{|\decvar_i|  h_z^s}\right).
\end{equation*}

For an arbitrary $a=(a_1, a_2, \ldots, a_N) \in W(z)$ we simply define
\begin{equation*}
\vv=\sum_{\ell=1}^N a_\ell \vv_\ell. 
\end{equation*}
Then, $\vv$ satisfies \eqref{aux134} and hence $z \in  \Lh$. By \eqref{vvboundi} we have 
\begin{equation*}
\|\nabla \vv\|_{L^\infty(\Omega_h(z))} \le  C\left( 1+ \frac{1}{|\decvar_i|  h_z^s}\right) \max_\ell |a_\ell|,
\end{equation*}
which proves \eqref{evenbound}. 
\end{proof}

\subsection{Simplification of condition \eqref{bigd}}

In the case $i=0$, we have
\begin{equation}\label{eqn:simplebigd}
\begin{split}
\decvar_0&=\sum_{i=1}^N (-1)^j d_{j0}=
\sum_{i=1}^N (-1)^j \cot(\theta_j)\big(|e_j|^{-2}-|e_{j-1}|^{-2}\big) \\
&=\sum_{i=1}^N (-1)^j \big(\cot(\theta_j)+\cot(\theta_{j+1})\big)|e_j|^{-2}.
\end{split}
\end{equation}
Thus we see that generically this is nonzero, since the lengths $|e_j|$
can be chosen independently of the angles $\theta_j$.
For $i=1,2$, we have
\begin{equation}\label{eqn:beetermone}
\begin{split}
\sum_{j=1}^N (-1)^j \frac{3}{|T_j|} b_{ji} &=
\sum_{j=1}^N (-1)^j \frac{1}{|T_j|} \big(y_{j-1}-y_j\big)\cdot\cde_i^\perp \\
&=-\sum_{j=1}^N (-1)^j \Big(\frac{1}{|T_j|}+\frac{1}{|T_{j+1}|}\Big) y_j\cdot\cde_i^\perp \\
\end{split}
\end{equation}

For $i=1,2$, we have
\begin{equation}\label{eqn:pretypeone}
\begin{split}
-12 \sum_{j=1}^N (-1)^j 
\cot(\theta_j) \Big(\frac{c_{j,i}}{|e_j|^2}&-\frac{c_{j-1,i}}{|e_{j-1}|^2} \Big) 
=-12\sum_{j=1}^N (-1)^j \cot(\theta_j)
\Big(\frac{c_{j,i}}{|e_j|^2}-\frac{c_{j-1,i}}{|e_{j-1}|^2} \Big) \\
&=4\sum_{j=1}^N (-1)^j \cot(\theta_j)
\Big(\frac{y_{j}-y_N}{|e_j|^2}-\frac{y_{j-1}-y_N}{|e_{j-1}|^2} \Big)\cdot\cde_i^\perp \\
&=4\sum_{j=1}^N (-1)^j \cot(\theta_j)
\Big(\frac{y_{j}}{|e_j|^2}-\frac{y_{j-1}}{|e_{j-1}|^2} \Big)\cdot\cde_i^\perp \\
 &\quad -4\bigg(\sum_{j=1}^N (-1)^j \cot(\theta_j)
\Big(\frac{1}{|e_j|^2}-\frac{1}{|e_{j-1}|^2} \Big)\bigg)y_N\cdot\cde_i^\perp \\
&=4\sum_{j=1}^N (-1)^j \cot(\theta_j)
\Big(\frac{y_{j}}{|e_j|^2}-\frac{y_{j-1}}{|e_{j-1}|^2} \Big)\cdot\cde_i^\perp \\
&\quad -4\bigg(\sum_{j=1}^N (-1)^j d_{j0} \bigg) y_N\cdot\cde_i^\perp \\
&=4\sum_{j=1}^N (-1)^j
    \frac{\cot(\theta_j)+\cot(\theta_{j+1})}{|e_j|^2}y_{j}\cdot\cde_i^\perp
-4\decvar_0 \, y_N\cdot\cde_i^\perp .
\end{split}
\end{equation}
Therefore, for $i=1,2$,
\begin{equation}\label{eqn:alldeemone}
\begin{split}
\decvar_i&=
%&=-\sum_{j=1}^N (-1)^j \Big(\frac{1}{|T_j|}+\frac{1}{|T_{j+1}|}\Big) y_j\cdot\cde_i^\perp \\
\sum_{j=1}^N (-1)^j\bigg(4 \frac{\cot(\theta_j)+\cot(\theta_{j+1})}{|e_j|^2}
-\Big(\frac{1}{|T_j|}+\frac{1}{|T_{j+1}|}\Big) \bigg) y_{j}\cdot\cde_i^\perp
-4\decvar_0 \, y_N\cdot\cde_i^\perp .
\end{split}
\end{equation}

\section{Meshes where \eqref{bigd} fails to hold}

Lemma \ref{even} gives sufficient conditions for an interior vertex $z$ with 
an even number of triangles to be a local interpolating vertex (i.e., $z \in \Lh$). 
We see that \eqref{bigd} is a mild constraint and that a generic vertex will 
satisfy \eqref{bigd}, however, there are important examples of vertices which do not 
satisfy \eqref{bigd} and perhaps are not local interpolating vertices.
Here we present some examples.

\subsection{Regular $N$-gon with $N$ even}

Suppose that $\Omega_h(z)$ is a triangulated regular $N$-gon with $N$ even.
More precisely, we assume that $\Omega_h(z)$ is subdivided by $N$ similar
triangles, with edge lengths $|e_1|=|e_2|= \cdots=|e_N|$ and interior angles
$\theta_1=\theta_2= \cdots=\theta_N$.
Then we can show that \eqref{bigd} does not hold. 

First of all, the condition on the edge lengths alone implies that $d_{j0}=0$
for all $j=1,\dots,N$. Thus $\decvar_0=0$.

Now consider $i=1,2$.
The vertices of the regular $N$-gon can be written as
$$
y_k=h e^{\iota 2\pi k/N},
$$
for some $h$. Here we make the the standard association $e^{\iota \theta}$ with the vector $[\cos \theta, \sin \theta]^t$.
We conclude that, for $N$ even, 
\begin{equation}\label{eqn:symcondc}
h^{-1}\sum_{j=1}^N (-1)^j y_{j} = \sum_{j=1}^{N/2} e^{\iota 4\pi j/N} 
    - \sum_{j=1}^{N/2} e^{\iota 2\pi (2j-1)/N} 
= \big(1-e^{-\iota 2\pi/N}\big)\sum_{j=1}^{N/2} e^{\iota 4\pi k/N} =\bfz.
\end{equation}

Since all the triangles are the same, using \eqref{eqn:alldeemone} and that we showed $\decvar_0=0$ we have for $i=1,2$
\begin{equation*}
\decvar_i= \left( \frac{8\cot(\theta_1)}{{|e_1|^2}}-\frac{2}{|T_1|}\right) \sum_{j=1}^N (-1)^j y_{j}\cdot\cde_i^\perp=0,
\end{equation*}
where we used \eqref{eqn:symcondc}. Thus \eqref{bigd} fails for $i=1,2$ as well, and Lemma \ref{even} cannot
be used.

\subsection{Three lines mesh}

For the three-lines mesh generating a regular hexagonal pattern as shown on the 
left in Figure \ref{fig:hexanddiagonal}, the condition \eqref{bigd} also fails 
for $i=0,1,2$ since each vertex is at the center of a regular hexagon.

\section{Crossed triangles}

Consider the mesh shown on the right in Figure \ref{fig:hexanddiagonal}.
Half of the vertices are at the center of a regular 4-gon, but these are
singular vertices, so these are all local interpolating vertices.
For the other vertices, at the center of a non-regular 8-gon, we can argue
as follows.
Since the interior angles are all the same ($\pi/4$), and $\cot(\pi/4)=1$,  we have using \eqref{eqn:simplebigd}

\begin{equation}\label{eqn:octcondc}
\decvar_0=2\sum_{j=1}^8 (-1)^j\frac{1}{|e_j|^2}.
\end{equation}
Let $L$ be the length of the smallest edge: $L=\min\set{|e_j|}{j=1,\dots,8}$.
Then the longest edge length is $\sqrt{2} L$, and the edge lengths $|e_j|$ 
alternate $L,\sqrt{2} L,L,\sqrt{2} L,\dots$.
Thus
$$
\sum_{j=1}^8 (-1)^j\frac{1}{|e_j|^2}=\pm\frac{2}{L^2},
$$
depending on where we start the counting.
Therefore condition \eqref{bigd} holds at these vertices, and thus Lemma \ref{even} 
can be applied to conclude that these vertices are also local interpolating vertices.
Thus all of the vertices in the mesh shown on the right in 
Figure \ref{fig:hexanddiagonal} are in $\Lh$.

\begin{figure}
\centerline{ \includegraphics[scale=1.1]{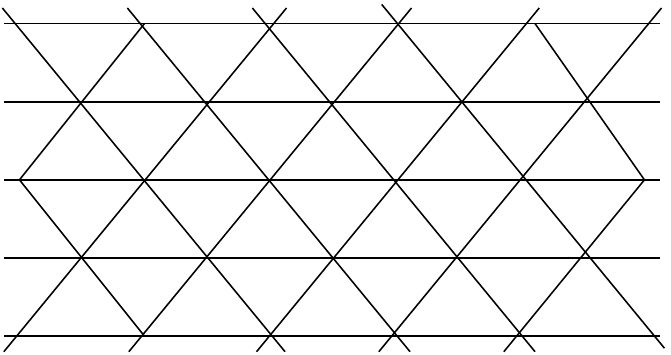}
\qquad   \includegraphics[scale=.6]{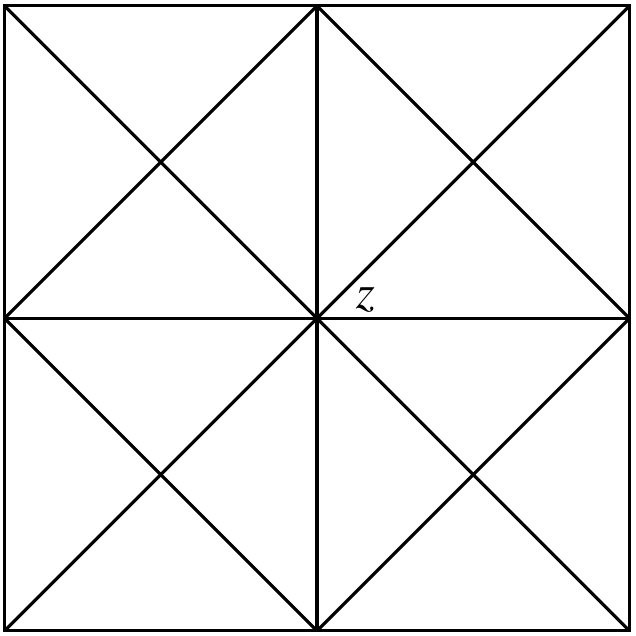}}
%\qquad   \includegraphics[scale=.4]{}}
%\ra{1.1}
%\vspace{-60pt}
\caption{For the mesh on left no interior vertex satisfies \eqref{bigd}. All interior vertices on the mesh on the right belong to $\Lh$.} 
\label{fig:hexanddiagonal}
\end{figure}

\section{Inf-sup stability when all interior vertices belong to  $\Lh$ }
\label{sec:kaythree}

In the previous section we have identified many local interpolating vertices. In particular, singular vertices and interior vertices with odd number of triangles are local interpolating vertices; see Lemmas \ref{lemma0} and \ref{odd}. If $z$ is an interior non-singular vertex with even number of triangles then Lemma \ref{even} gives sufficient conditions for it to be a local interpolating vertex.  None of the above examples address boundary vertices that are non-singular.  In this section we will show how to interpolate at those vertices but in a non-local way. Then, using that result and assuming that $\Sh^{\text{int}} \subset \Lh$ we will prove inf-sup stability. In the next section we will address $\Sh^{\text{int}} \not \subset \Lh$.

For vertices that are not local interpolating vertices we can still interpolate there 
but with a side effect of polluting a neighboring vertex. 
In other words, the vector field will not belong to $\V_{h,00}^3(z)$. 
To do this, we will need to define a piecewise cubic function that has 
average zero on edges. 
For every $z \in \Sh^\partial$ and interior edge $e \in \Eh$ with $e=\{z, y\}$ we set
\begin{equation}\label{defkappa}
\kappa_e^{z} = \eta_e^z- \frac{1}{2} \psi_z\psi_y
             = \psi_z^2\psi_y - \frac{1}{2} \psi_z\psi_y.
\end{equation}
The function $\kappa_e^{z}$ will play the same role as $\gamma_e^z$ 
in \cite{guzmanscottdegfour} but the difference is that the added term 
in $\kappa_e^z$ is piecewise quadratic. 
Let $T_1, T_2 \in \Th(z)$ be two triangles that have $e$ as an edge,
and let $\theta_i$ be the angle between the edges of $T_i$ emanating from $z$. 

Then we can easily verify the following:
\begin{subequations}\label{kappa}
\begin{alignat}{1}
& \text{ supp }   \kappa_e^{z} \subset T_1 \cup T_2,  \label{kappa1} \\
& (\nabla\kappa_e^{z}) (\sigma )=0 \quad \text{ for  } \sigma \in \Sh,    
\; \sigma \neq z \text{ and } \sigma \neq  y, \label{kappa2} \\
 &\int_e \kappa_e^{z} \, ds =0 \label{kappa3} \\
 &\nabla \kappa_e^{z}|_{T_i}(z) = \frac{1}{2}\nabla\psi_y|_{T_i} , \quad \nabla \kappa_e^{z}|_{T_i}(y) = -\frac{1}{2}\nabla\psi_z|_{T_i} \quad \text{ for } i=1,2  \label{kappa4}.
\end{alignat}
\end{subequations}

Using these functions we can prove the following result.

\begin{lemma}\label{boundary}
For every $p \in Q_h^{2}$ and $z \in \Sh^\partial$ there exists a 
$\vv \in \V_{h,0}^3(z)$ such the following properties hold:
\begin{subequations}\label{lv}
\begin{alignat}{1}
&\dive \vv (\sigma)= 0 \quad \text{ for all } \sigma \in \Sh^{\partial},  
\; \sigma \neq z,
\label{lv1}  \\
& \dive \vv|_T (z)= p|_T(z) \quad \text{ for all } T \in \Th(z)
\label{lv2}.
\end{alignat}
\end{subequations}
If $z$ is non-singular 
\begin{equation}
\|\nabla \vv \|_{L^2(\Omega_h(z))} 
\le C \Big(\frac{1}{\Theta(z)}+1\Big)\|p\|_{L^2(\Omega_h(z))}.
\end{equation}
If $z$ is singular  
\begin{equation}
 \|\nabla \vv \|_{L^2(\Omega_h(z))} \le C  \|p\|_{L^2(\Omega_h(z))}.
\end{equation}
The constant $C$ only depends only on the shape regularity.
\end{lemma}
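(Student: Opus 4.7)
Enumerate $\Th(z)=\{T_1,\ldots,T_N\}$ as in the paper's convention, so $T_1$ and $T_N$ each carry a boundary edge at $z$; for $1\le j\le N-1$ let $e_j=\{z,y_j\}$ be the interior edge shared by $T_j$ and $T_{j+1}$, and set $a_j:=p|_{T_j}(z)$. The goal is to build $\vv\in\V_{h,0}^3(z)$ with $\dive\vv|_{T_j}(z)=a_j$ and $\dive\vv(\sigma)=0$ at every $\sigma\in\Sh^{\partial}\setminus\{z\}$. The plan is to split into the singular and non-singular subcases, in both using the interior-edge functions $\ww_j=\ww_{e_j}^z$ of \eqref{w}, and in the non-singular case adding a correction built from $\kappa_{e_{j^\star}}^z$ of \eqref{defkappa}.

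\textbf{Singular subcase, $z\in\Sh^{2,\partial}$.} Here membership of $p$ in $Q_h^2$ forces $\sum_{j=1}^N(-1)^{j-1}a_j=0$. Following Lemma \ref{lemma0}, set $b_1:=a_1$ and $b_j:=a_j-b_{j-1}$ for $2\le j\le N-1$, and let $\vv:=\sum_{j=1}^{N-1}b_j\ww_j$. By \eqref{w} we have $\vv\in\V_{h,00}^3(z)\subset\V_{h,0}^3(z)$, and \eqref{w6} combined with the telescoping recursion gives $\dive\vv|_{T_j}(z)=a_j$ for $1\le j\le N-1$; the alternating-sum identity forces $b_{N-1}=a_N$, yielding \eqref{lv2} on $T_N$ as well. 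Property \eqref{lv1} is automatic because $\V_{h,00}^3(z)$ already kills the divergence at every vertex other than $z$. Combining \eqref{w7}, the polynomial inverse inequality $|a_j|\le Ch_z^{-1}\|p\|_{L^2(T_j)}$, and the $L^\infty$-to-$L^2$ scaling $\|\nabla\vv\|_{L^2(\Omega_h(z))}\lesssim h_z\|\nabla\vv\|_{L^\infty(\Omega_h(z))}$ on the shape-regular patch delivers the $\Theta(z)$-free estimate.

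\textbf{Non-singular subcase, $z\in\Sh^{1,\partial}$.} The remainder $A:=\sum_{j=1}^N(-1)^{j-1}a_j$ may be nonzero, so the $\ww_j$'s alone cannot match the $a_j$'s. Pick $j^\star\in\{1,\ldots,N-1\}$ realizing $\Theta(z)=|\sin(\theta_{j^\star}+\theta_{j^\star+1})|>0$, let $\vec\tau$ be a unit vector normal to $e_{j^\star}$, and set $\phi:=\kappa_{e_{j^\star}}^z\vec\tau$. By \eqref{kappa1}, $\phi$ is supported in $T_{j^\star}\cup T_{j^\star+1}$; by \eqref{kappa3}, $\int_T\dive\phi\,dx=0$ on each of these triangles, so $\phi\in\V_{h,0}^3(z)$; and by \eqref{kappa2}, $\dive\phi$ vanishes at every vertex of $\Sh$ except $z$ and $y_{j^\star}$. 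Using \eqref{kappa4} together with \eqref{nablapsi}--\eqref{hTj}, the contribution of $\phi$ to the alternating sum reduces to
\[
\sum_{j=1}^N(-1)^{j-1}(\dive\phi)|_{T_j}(z)
=\tfrac{(-1)^{j^\star-1}}{2}\bigl(\nabla\psi_{y_{j^\star}}|_{T_{j^\star}}-\nabla\psi_{y_{j^\star}}|_{T_{j^\star+1}}\bigr)\cdot\vec\tau ,
\]
and the jump of $\nabla\psi_{y_{j^\star}}$ across $e_{j^\star}$ is a purely normal vector of length comparable to $\sin(\theta_{j^\star}+\theta_{j^\star+1})/|e_{j^\star}|$. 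Choosing $\vec\tau$ normal to $e_{j^\star}$ therefore makes this contribution comparable to $\Theta(z)/|e_{j^\star}|$. Pick the scalar $\alpha$ so that $\tilde a_j:=a_j-\alpha(\dive\phi)|_{T_j}(z)$ has vanishing alternating sum; then $|\alpha|\lesssim |A|\,|e_{j^\star}|/\Theta(z)\lesssim \|p\|_{L^2(\Omega_h(z))}/\Theta(z)$. Apply the singular recursion of the previous paragraph to $(\tilde a_j)$ to produce $\tilde\vv=\sum_j b_j\ww_j\in\V_{h,00}^3(z)$, and set $\vv:=\alpha\phi+\tilde\vv$. Combining \eqref{w7} with $\|\nabla\phi\|_{L^\infty}\lesssim h_z^{-1}$ from \eqref{kappa} and the bound on $|\alpha|$ yields $\|\nabla\vv\|_{L^2(\Omega_h(z))}\lesssim(\Theta(z)^{-1}+1)\|p\|_{L^2(\Omega_h(z))}$.

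\textbf{Main obstacle.} The delicate step is the sharp geometric estimate of the jump $\nabla\psi_{y_{j^\star}}|_{T_{j^\star}}-\nabla\psi_{y_{j^\star}}|_{T_{j^\star+1}}$, which must be shown comparable from above and below to $\sin(\theta_{j^\star}+\theta_{j^\star+1})/|e_{j^\star}|$; this is precisely where the $\Theta(z)^{-1}$ factor in the final bound originates. A secondary subtlety is that $\alpha\phi$ pollutes $\dive\vv$ at the vertex $y_{j^\star}$, so property \eqref{lv1} survives only when $y_{j^\star}$ is an interior vertex. In the generic non-singular configuration such an interior edge always exists, but in pathological geometries where every neighbor of $z$ already lies on $\partial\Omega$ one must spread the correction over two interior edges so that the two boundary pollutions cancel by a singular-type recurrence analogous to the one used in the singular subcase.
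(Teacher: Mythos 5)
Your proof is correct in substance and follows essentially the same route as the paper: the singular case is the boundary analogue of Lemma \ref{lemma0}, and the non-singular case combines the $\ww_{e_j}^z$-recursion with a single correction built from $\kappa_{e}^z$ on the edge realizing $\Theta(z)$, which is exactly where the factor $\Theta(z)^{-1}$ enters in both arguments. The only real difference is how that correction is deployed: you take the direction normal to $e_{j^\star}$, so your $\phi$ is (up to scaling) the field $\rr=2|e_1|\kappa_{e_1}^z\n_1$ later used in Lemma \ref{lemmazy}, and you use it to annihilate the alternating sum before rerunning the singular-type recursion; the paper instead takes the tangential direction $\tv_{s+1}$ and normalizes so that $\dive \vv_s|_{T_i}(z)=\delta_{is}$, then builds Kronecker-delta fields for every triangle and superposes them with coefficients $p|_{T_j}(z)$. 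Both give the same bound. What you call the ``main obstacle'' is in fact a short computation from \eqref{kappa4}, \eqref{nablapsi}, \eqref{hTj}: the jump of $\nabla\psi_{y_{j^\star}}$ across $e_{j^\star}$ is normal with magnitude $(\cot\theta_{j^\star}+\cot\theta_{j^\star+1})/|e_{j^\star}| = \sin(\theta_{j^\star}+\theta_{j^\star+1})/\big(\sin\theta_{j^\star}\sin\theta_{j^\star+1}\,|e_{j^\star}|\big)$, which is comparable to $\Theta(z)/|e_{j^\star}|$ under shape regularity; the paper sidesteps even this by its exact normalization and use of \eqref{tpsi}. As for your ``secondary subtlety'': the paper's $\vv_s$ also has nonzero divergence at $y_s$, since by \eqref{kappa4} $\nabla\kappa_{e_s}^z(y_s)=-\tfrac12\nabla\psi_z\neq 0$, so polluting the far endpoint of the chosen interior edge is a feature of the paper's construction as well; \eqref{lv1} only constrains boundary vertices, so this is harmless when that endpoint is interior, and the case where it lies on $\partial\Omega$ is not treated in the paper's proof either. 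Note, though, that your proposed remedy for that case---making the pollutions from two interior edges cancel---cannot work as stated, because the two pollutions occur at distinct vertices; this concerns only a configuration the paper's own argument also leaves unaddressed and does not affect your main construction.
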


\begin{proof}
If $z$ is singular, the result follows from Lemma \ref{lemma0}, so now assume
that $z$ is non-singular. Enumerate the triangles such that $T_1$ and $T_N$ each have a boundary edge,
and $T_j, T_{j+1}$ share an edge  $e_j=\{z, y_j\}$,  for $j=1, \ldots N-1$. 
Let $\theta_j $ denote the angle between the edges of $T_j$ originating from $z$. 
Also let $\n_j$ be the normal to $e_j$ out of $T_j$ and $\tv_j$ be tangent 
to $e_j$ pointing away from $z$.
Let $1 \le s \le N-1$ be such that $ |\sin(\theta_s+ \theta_{s+1})|=\Theta(z)$. We will define vector fields $\vv_1, \ldots, \vv_N$. We start by defining $\vv_s$.
\begin{equation*}
\vv_{s}= \frac{ 2 |e_s| \, \sin(\theta_s)}{\sin(\theta_s+ \theta_{s+1})}   \tv_{s+1} \kappa_{e_s}^z .
\end{equation*}
Then, we can easily show that
\begin{equation*}
\dive \vv_{s}|_{T_i}(z)= \delta_{i,s} \text{ for } 1 \le i \le N,
\end{equation*}
where $\delta_{i,s}$ is the Kronecker $\delta$.
Indeed, 
\begin{equation*}
\dive \vv_{s}|_{T_i}(z)
=\frac{ 2 |e_s| \, \sin(\theta_s)}{\sin(\theta_s+ \theta_{s+1})} \tv_{s+1}\cdot \nabla \kappa_{e_s}^z|_{T_i}(z) 
=\frac{   |e_s| \, \sin(\theta_s)}{\sin(\theta_s+ \theta_{s+1})} \tv_{s+1} \cdot \nabla \psi_{y_s}|_{T_i}(z) .\\
\end{equation*}
The result follows after using \eqref{tpsi} and \eqref{hTj} to calculate  $\psi_{y_s}|_{T_i}$ and using basic trigonometry. Then we can define inductively for $s+1 \le j \le N$. 
\begin{equation*}
\vv_{j}=\ww_{e_j}^z-\vv_{j-1}.
\end{equation*}
Also, for $1 \le j \le s-1$ we define 
\begin{equation*}
\vv_{j}=\ww_{e_j}^z-\vv_{j+1}.
\end{equation*}
Hence, we have the following property:
\begin{equation}
\dive \vv_{j}|_{T_i}(s)= \delta_{i,j} \text{ for } 1 \le i,j \le N.
\end{equation}
We then define 
\begin{equation*}
\vv= \sum_{j=1}^N p|_{T_j}(z) \vv_{j}.
\end{equation*}
The stated conditions on $\vv$ are easily verified.
\end{proof}

We can then use the above lemma to prove the following result. We define 
 \begin{equation*}
 \Theta_{\min, \partial}= \min_{z \in \Sh^{1, \partial}} \Theta(z).
 \end{equation*}

\begin{lemma}\label{blemma}
For every $p \in Q_h^{2}$  there exists a  $\vv \in \V_{h,0}^3$ such that 
\begin{alignat}{1}
& (\dive \vv-p)(z)=0 \quad \text{ for all } z \in \Sh^\partial,  
\end{alignat}
and
\begin{equation*}
 \|\nabla \vv \|_{L^2(\Omega)} \le C_b \Big(\frac{1}{\Theta_{\min,\partial}}+1\Big) \|p\|_{L^2(\Omega)}.
\end{equation*}
The constant $C_b$ only depends on the shape regularity.
\end{lemma}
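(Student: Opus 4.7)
The plan is to apply Lemma \ref{boundary} to each boundary vertex independently and then superpose the resulting local vector fields. For each $z \in \Sh^\partial$, Lemma \ref{boundary} produces a vector field $\vv_z \in \V_{h,0}^3(z)$ supported in $\Omega_h(z)$ satisfying $\dive\vv_z(\sigma)=0$ for every boundary vertex $\sigma\neq z$ and $\dive\vv_z|_T(z)=p|_T(z)$ for every $T\in\Th(z)$. I then set
\begin{equation*}
\vv=\sum_{z\in\Sh^\partial}\vv_z .
\end{equation*}

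Since each $\vv_z$ already belongs to $\V_{h,0}^3$, so does $\vv$. For the vertex interpolation condition, pick any $\sigma\in\Sh^\partial$: the only summand contributing a nonzero divergence at $\sigma$ is $\vv_\sigma$ itself (by the first property in \eqref{lv} applied to each $\vv_z$ with $z\neq\sigma$), and on every triangle $T\in\Th(\sigma)$ we have $\dive\vv|_T(\sigma)=\dive\vv_\sigma|_T(\sigma)=p|_T(\sigma)$ by \eqref{lv2}. Hence $(\dive\vv-p)(\sigma)=0$ for all $\sigma\in\Sh^\partial$, as required. Note that no constraint is imposed at interior vertices, which is consistent with the statement.

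For the stability estimate, I use the finite-overlap property of the patches $\{\Omega_h(z)\}_{z\in\Sh^\partial}$: shape regularity guarantees that each triangle of $\Th$ lies in at most a bounded number $C_s$ of such patches. Therefore
\begin{equation*}
\|\nabla\vv\|_{L^2(\Omega)}^2 \le C_s\sum_{z\in\Sh^\partial}\|\nabla\vv_z\|_{L^2(\Omega_h(z))}^2 .
\end{equation*}
By Lemma \ref{boundary}, each summand is bounded by $C\bigl(1/\Theta(z)+1\bigr)^2\|p\|_{L^2(\Omega_h(z))}^2$ when $z$ is non-singular, and by $C\|p\|_{L^2(\Omega_h(z))}^2$ when $z$ is singular; in either case it is controlled by $C\bigl(1/\Theta_{\min,\partial}+1\bigr)^2\|p\|_{L^2(\Omega_h(z))}^2$. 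Summing and using finite overlap once more to absorb $\sum_z\|p\|_{L^2(\Omega_h(z))}^2\le C_s\|p\|_{L^2(\Omega)}^2$ yields the claimed bound with constant $C_b$ depending only on shape regularity.

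The only nontrivial point is making sure that the patch-to-patch interference does not spoil the boundary-vertex interpolation, which is why it was essential to arrange in Lemma \ref{boundary} that $\dive\vv_z$ vanish at every other boundary vertex, not merely at vertices in $\Sh\setminus\Omega_h(z)$; with that property in hand, the superposition argument is standard.
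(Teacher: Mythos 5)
Your proposal is correct and follows essentially the same route as the paper: the paper's proof also takes the fields from Lemma \ref{boundary} at each boundary vertex, sums them, and notes the required properties follow (which you spell out via \eqref{lv1}--\eqref{lv2} and the finite overlap of the patches $\Omega_h(z)$). Your write-up simply makes explicit the verification that the paper leaves to the reader.
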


\begin{proof}
Let $p \in Q_h^2$. For every  $z \in \Sh^{\partial}$  let $\vv^z$ be the vector field from Lemma \ref{boundary} then we set 
\begin{equation*}
\vv=\sum_{z \in \Sh^\partial} \vv_z.
\end{equation*}
Then, it is easy to verify the conditoins on $\vv$. 

\end{proof}

We can now prove the main result of the section.

\begin{theorem}\label{thm1}
Assume that $\Sh^{\text{int}} \subset \Lh$. Then, for every $p \in Q_h^{2}$ there exists a $\vv \in  V_{h,0}^3$ satisfying
\begin{alignat}{1}
(\dive \vv-p)(\sigma)&=0 \quad \text{ for all } \sigma \in \Sh,
\end{alignat}
with the bound
\begin{equation*}
\|\vv\|_{H^1(\Omega)}  \le  C (1+D) (1+\frac{1}{\Theta_{\min, \partial}}) \|p\|_{L^2(\Omega)}.
\end{equation*}
where  $D=\max_{z \in \Lh} D_z$.
\end{theorem}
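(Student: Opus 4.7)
The plan is to reduce to the interior case via the boundary correction from Lemma \ref{blemma}, and then patch together the local corrections at the interior vertices, which are all local interpolating vertices by hypothesis.

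First I would apply Lemma \ref{blemma} to obtain $\vv_1 \in \V_{h,0}^3$ with $(\dive\vv_1 - p)(z)=0$ for every boundary vertex $z$, together with the bound $\|\nabla\vv_1\|_{L^2(\Omega)}\le C_b(1+1/\Theta_{\min,\partial})\|p\|_{L^2(\Omega)}$. Set $p_1 := p - \dive\vv_1$. Since $\dive\V_h^3\subset Q_h^2$ by Lemma \ref{lemma1}, we still have $p_1\in Q_h^2$, and now $p_1$ vanishes at every boundary vertex.

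Next, at every interior vertex $z\in\Sh^{\mathrm{int}}$ (which is in $\Lh$ by hypothesis), consider the tuple $a^z := (p_1|_{T_1}(z),\dots,p_1|_{T_N}(z))$. For singular $z$, membership in $W(z)$ amounts to $A_h^z(p_1)=0$, which holds since $p_1\in Q_h^2$; for non-singular $z$, $W(z)=\R^N$ is unconstrained. Applying Definition \ref{localinterpolate} produces $\vv^z \in \V_{h,00}^3(z)$ with $\dive\vv^z|_{T_j}(z) = p_1|_{T_j}(z)$ for every $T_j\in\Th(z)$ and $\|\nabla\vv^z\|_{L^\infty(\Omega_h(z))} \le D_z\max_j|a^z_j| \le D\,\|p_1\|_{L^\infty(\Omega_h(z))}$. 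A standard inverse estimate gives $\|p_1\|_{L^\infty(\Omega_h(z))}\le C h_z^{-1}\|p_1\|_{L^2(\Omega_h(z))}$, and since $\vv^z$ is supported in $\Omega_h(z)$, one has $\|\nabla\vv^z\|_{L^2(\Omega_h(z))} \le C h_z\,\|\nabla\vv^z\|_{L^\infty(\Omega_h(z))} \le CD\,\|p_1\|_{L^2(\Omega_h(z))}$.

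Set $\vv_2 := \sum_{z\in\Sh^{\mathrm{int}}} \vv^z$. Because each $\vv^z$ lies in $\V_{h,00}^3(z)\subset\V_{h,0}^3$, we have $\vv_2\in\V_{h,0}^3$, and $\dive\vv_2(\sigma)=0$ at boundary vertices while $\dive\vv_2|_{T_j}(z)=p_1|_{T_j}(z)$ at each interior vertex $z\in T_j$ (here it is crucial that other $\vv^{z'}$ contribute $0$ at $z$ by the second-slot condition in $\V_{h,00}^3(z')$). Using the shape-regular finite-overlap property of the patches $\{\Omega_h(z)\}$,
\begin{equation*}
\|\nabla\vv_2\|_{L^2(\Omega)}^2 \le C \sum_{z\in\Sh^{\mathrm{int}}} \|\nabla\vv^z\|_{L^2(\Omega_h(z))}^2 \le C D^2 \|p_1\|_{L^2(\Omega)}^2.
\end{equation*}

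Finally, take $\vv := \vv_1 + \vv_2$. By construction $\vv\in\V_{h,0}^3$, $\dive\vv(\sigma)=p(\sigma)$ for every $\sigma\in\Sh$, and combining the two bounds with $\|p_1\|_{L^2(\Omega)}\le \|p\|_{L^2(\Omega)}+\|\dive\vv_1\|_{L^2(\Omega)}\le C(1+1/\Theta_{\min,\partial})\|p\|_{L^2(\Omega)}$ and a Poincaré inequality (since $\vv$ has zero boundary trace) yields the claimed estimate $\|\vv\|_{H^1(\Omega)}\le C(1+D)(1+1/\Theta_{\min,\partial})\|p\|_{L^2(\Omega)}$. The only subtlety I anticipate is the bookkeeping verifying that the local corrections $\vv^z$ do not interfere with one another at other vertices, which is precisely what the $\V_{h,00}^3(z)$ definition ensures, and checking the $W(z)$ compatibility at singular vertices through the definition of $Q_h^2$.
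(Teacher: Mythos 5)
Your proposal is correct and follows essentially the same route as the paper's proof: a boundary correction via Lemma \ref{blemma}, then local corrections $\vv^z\in\V_{h,00}^3(z)$ at each interior vertex using the local interpolating property, combined with an inverse estimate and summation over patches. You in fact supply details the paper leaves implicit (the $W(z)$ compatibility at singular vertices via $A_h^z(p_1)=0$, the finite-overlap summation, and the Poincar\'e step), all of which are handled correctly.
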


\begin{proof}
Let $\vv_1$ be from Lemma  \ref{blemma} and let $p_1=p-\dive \vv_1$ and we note that $p_1 \in Q_h^2$ with $p_1$ vanishing on boundary vertices. Since $\Sh^{\text{int}} \subset \Lh$ for every $z \in \Sh^{\text{int}}$ there exists a $\vv^z \in \V_{h,00}^3(z)$ such that
\begin{equation*} 
(\dive \vv^z-p_1)(z)=0 \  
\end{equation*}
with
\begin{equation*}
\|\nabla \vv^z\|_{L^\infty(\Omega_h(z))} \le D_z \max_{T \in \Th(z)} |p_1|_T(z)| .
\end{equation*}
Using an inverse estimate we can show 
\begin{equation*}
\|\nabla \vv^z\|_{L^2(\Omega_h(z))} \le C D_z \|p_1\|_{L^2(\Omega_h(z))}.
\end{equation*}
If we set 
\begin{equation*}
\vv_2= \sum_{z \in \Sh^{\text{int}}} \vv^z
\end{equation*}
and set $\vv=\vv_1 +\vv_2$, then the desired conditions on $\vv$ are met.
\end{proof}

Using Lemma \ref{lemma4} we have the following corollary.

\begin{corollary}
Assume that $\Sh^{\text{int}} \subset \Lh$, then the inf-sup condition holds for $Q_h^2 \times \V_h^3$ with constants given by Lemma \ref{lemma4} and Theorem \ref{thm1}. 
\end{corollary}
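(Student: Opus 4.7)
The plan is simply to chain Theorem~\ref{thm1} into Lemma~\ref{lemma4}. Observe that Lemma~\ref{lemma4} reduces the inf-sup estimate \eqref{inf-sup} for $k=3$ to verifying one hypothesis: that for each $p\in Q_h^2$ there exists $\vv\in\V_{h,0}^3$ with $(\dive\vv-p)(\sigma)=0$ for every vertex $\sigma\in\Sh$, satisfying a bound $\|\vv\|_{H^1(\Omega)}\le\alpha_3\|p\|_{L^2(\Omega)}$. Under the standing assumption $\Sh^{\text{int}}\subset\Lh$, Theorem~\ref{thm1} supplies exactly such a $\vv$, with the explicit constant
$$
\alpha_3 = C\,(1+D)\Big(1+\tfrac{1}{\Theta_{\min,\partial}}\Big).
$$

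So the proof I would write is essentially a single sentence: let $p\in Q_h^2$ be arbitrary; invoke Theorem~\ref{thm1} to produce $\vv\in\V_{h,0}^3$ interpolating $p$ at every vertex of $\Sh$ with the stated $H^1$ bound; then apply Lemma~\ref{lemma4} (with this $\alpha_3$) to conclude that \eqref{inf-sup} holds for $k=3$ with
$$
\beta = \frac{1}{\alpha_1+\alpha_3(1+\alpha_1)+\alpha_2\bigl(1+\alpha_3(1+\alpha_1)\bigr)}.
$$

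There is no real obstacle: the corollary is the bookkeeping statement that packages the quantitative content of Theorem~\ref{thm1} into the form demanded by Lemma~\ref{lemma4}. The only thing to be careful about is that Theorem~\ref{thm1} delivers $\vv\in\V_{h,0}^3$ (not merely $\V_h^3$), which is precisely what Lemma~\ref{lemma4}'s hypothesis requires, and that the vertex-interpolation property holds on \emph{all} of $\Sh$ (interior via $\Lh$, boundary via Lemma~\ref{blemma} inside the proof of Theorem~\ref{thm1}). Both conditions are already part of the conclusion of Theorem~\ref{thm1}, so no further work is needed.
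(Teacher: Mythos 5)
Your proposal is correct and is exactly the paper's route: the corollary is stated as an immediate consequence of combining Theorem~\ref{thm1} (which furnishes the required $\vv\in\V_{h,0}^3$ with $\alpha_3 = C(1+D)\bigl(1+\tfrac{1}{\Theta_{\min,\partial}}\bigr)$) with Lemma~\ref{lemma4}. Nothing further is needed.
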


\section{Inf-sup stability: the general case} 
\label{sec:gencase}

As mentioned above for most meshes $\Sh^{\text{int}} \subset \Lh$ and hence by the previous section one can prove inf-sup stability. However, for very important meshes, such as the diagonal mesh, none of the interior vertices belong to $\Lh$ (i.e. $\Lh \cap \Sh^{\text{int}} = \emptyset$). However, if some interior nodes belong to $\Lh$ then it might hold that $\dive \V_h^3=Q_h^2$ and we can give a bound for inf-sup constant. To do this we use a concept of a tree and paths. We consider the mesh as a graph and consider trees and paths that are subgraphs of the mesh. Precise statements are given below.

\subsection{Paths and trees in a mesh}

We will prove that if there is a tree of the mesh $\Th$ with root in  $\Lh$  satisfying certain mild conditions then we can interpolate a pressure on all the vertices of the tree. We start with some preliminary results. Let $T_1, T_2 \in \Th(z)$ be two triangles that have $e$ as an edge and let $\phi_i$ be the angle between the edges of $T_i$ emanating from $z$.  Then we define
\begin{equation}\label{Mez}
M_e^z= \cot(\phi_1)+\cot(\phi_2).
\end{equation}
Note that $M_e^z=0$ if $\phi_1+\phi_2= \pi$.

\begin{figure}
\centerline{\includegraphics[width=2.7in]{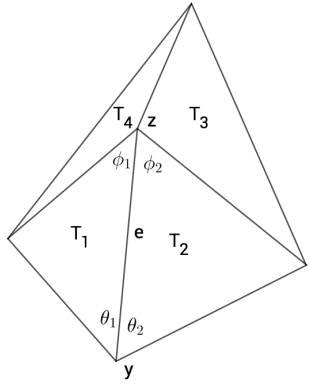}}
\caption{Illustration for Lemma \ref{lemmazy}, with $N=4$.}
\label{lemmazyill}
\end{figure}

\begin{lemma}\label{lemmazy}
Let $z,y \in \Sh$ and $e=\{z,y\}\in\Eh$ and suppose that $M_e^z\neq 0$.
Let $\Th(z)=\{ T_1, T_2, \ldots, T_N\}$ and suppose $T_1$ and $T_2$ are
the two triangles that share $e$ as an edge and let $\theta_i$ be the angles of $T_i$ originating from $y$ for $i=1,2$; see Figure \ref{lemmazyill}. Let $a=(a_1, a_2, \ldots, a_N) \in \R^N$ and define the alternating sum $s(a)= \sum_{j=1}^N (-1)^{j-1} a_j$. Then, there exists a $\vv \in \V_{h,0}^3(z)$
\begin{subequations}\label{zy}
\begin{alignat}{1}
\dive \vv|_{T_i} (z)&= a_i \quad \text{ for all } 1\le i \le N \label{zy1}\\
\dive \vv(\sigma)&=0 \quad \text{ for all } \sigma \in \Sh,\, \sigma \neq z,\, \sigma \neq y, \label{zy2} \\
\dive \vv|_{T_1}(y)&= -s(a) \frac{\cot(\theta_1)}{M_e^z}, \quad  \dive \vv|_{T_2}(y)=  s(a) \frac{\cot(\theta_2)}{M_e^z} \label{zy3} \\
\|\nabla \vv\|_{L^\infty(\Omega_h(z))} \le &   \frac{C }{|M_{e}^z|}
      \max_{1 \le i \le N}  |a_i|. \label{zy5}
\end{alignat}
\end{subequations}
The constant $C$ depends only on the shape regularity. 
\end{lemma}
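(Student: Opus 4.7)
The plan is to construct $\vv$ as
\[
\vv = \sum_{i=1}^N c_i \,\ww_i \;+\; \beta\bigl(\vc_e - \tilde{\vc}_e\bigr),
\]
where $\ww_i=\ww_{e_i}^z$ are the tangent bubbles from \eqref{wei} along the edges $e_i$ emanating from $z$, $\vc_e=(12/|e|)\psi_z^2\psi_y\,\n_e$ is the normal bubble at $z$ along $e$ (the function $\vc_1$ from \eqref{eqn:startwith}), and $\tilde{\vc}_e:=(12/|e|)\psi_y^2\psi_z\,\n_e$ is its $z\leftrightarrow y$ counterpart centered at $y$. By direct symmetry with the lemma governing $\vc_1$, one checks that $\tilde{\vc}_e$ is supported in $T_1\cup T_2$, has vanishing divergence at every vertex other than $y$, has divergence $12\cot(\theta_1)/|e|^2$ on $T_1$ and $-12\cot(\theta_2)/|e|^2$ on $T_2$ at $y$, has triangle means $+1$ and $-1$ on $T_1$ and $T_2$, and satisfies $\|\nabla\tilde{\vc}_e\|_{L^\infty}\le C/|e|^2$. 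Consequently $\vc_e-\tilde{\vc}_e\in\V_{h,0}^3(z)$ (the means cancel) and its divergence vanishes at every vertex except $z$ and $y$.

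Next, I would set $\beta:=s(a)|e|^2/(12\,M_e^z)$, which is legitimate since $M_e^z\neq 0$. A direct calculation then shows that $\beta(\vc_e-\tilde{\vc}_e)$ contributes at $y$ exactly the pollution $-s(a)\cot(\theta_1)/M_e^z$ on $T_1$ and $s(a)\cot(\theta_2)/M_e^z$ on $T_2$ required by \eqref{zy3}; while at $z$ it contributes $s(a)\cot(\phi_1)/M_e^z$ on $T_1$, $-s(a)\cot(\phi_2)/M_e^z$ on $T_2$, and nothing on $T_3,\dots,T_N$. The $c_i$'s must then supply the remaining divergence values at $z$, namely
\[
\tilde a_1 = a_1 - s(a)\cot(\phi_1)/M_e^z,\qquad \tilde a_2 = a_2 + s(a)\cot(\phi_2)/M_e^z,\qquad \tilde a_j=a_j\;\text{for}\;j\ge 3.
\]
Since $(\dive\ww_i)|_{T_j}(z)=\delta_{j,i}+\delta_{j,i+1}$ with indices taken modulo $N$, this reduces to the cyclic linear system $c_{j-1}+c_j=\tilde a_j$ for $j=1,\dots,N$.

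The crucial observation is that $s(\tilde a)=0$, which follows immediately from the identity $\cot(\phi_1)+\cot(\phi_2)=M_e^z$. This is exactly the solvability condition for the cyclic system: one obtains a unique solution with $c_0=0$ when $N$ is odd, and a one-parameter family when $N$ is even, from which I would pick $c_0=0$. A standard telescoping argument then yields $|c_j|\le C\max_i|\tilde a_i|\le C\max_i|a_i|(1+1/|M_e^z|)$. Properties \eqref{zy1}--\eqref{zy3} are built into the construction, and \eqref{zy5} follows by combining $\|\nabla \ww_i\|_{L^\infty}\le C$ with $\|\beta\nabla(\vc_e-\tilde{\vc}_e)\|_{L^\infty}\le C|\beta|/|e|^2=C|s(a)|/|M_e^z|$; shape regularity bounds $|M_e^z|$ from above, which allows one to absorb the additive $1$ into $1/|M_e^z|$.

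The main technical obstacle is really the cancellation $s(\tilde a)=0$: without it, the cyclic system would be unsolvable for even $N$, and the construction with this minimal set of building blocks would fail. That identity is the reason the pollution ratio in \eqref{zy3} must take exactly the form $-s(a)\cot(\theta_1)/M_e^z$ versus $s(a)\cot(\theta_2)/M_e^z$: those particular coefficients ensure that the correction at $z$ in $T_1$ and $T_2$ precisely absorbs the full alternating sum $s(a)$, independently of the parity of $N$. The hypothesis $M_e^z\neq 0$ is exactly what makes this $\beta$-scaling available.
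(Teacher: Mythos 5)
Your construction is correct, and it deviates from the paper's proof in an interesting but ultimately equivalent way. The paper replaces your antisymmetric normal bubble $\vc_e-\tilde{\vc}_e=\frac{12}{|e|}\psi_z\psi_y(\psi_z-\psi_y)\n_e$ with the field $\rr=2|e|\kappa_e^z\n_1$, where $\kappa_e^z=\psi_z^2\psi_y-\frac12\psi_z\psi_y$; both corrections have zero mean on the edge $e$, hence zero mean divergence on $T_1$ and $T_2$, and both trade divergence values at $z$ for pollution at $y$ of size $\cot(\theta_i)/M_e^z$ — so the key mechanism is the same, and the hypothesis $M_e^z\neq 0$ plays the identical role. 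Where you genuinely diverge is in the bookkeeping at $z$: the paper forms $\vv_1=\frac{1}{M_e^z}(\rr+\cot(\phi_2)\ww_1)$, builds a dual basis $\vv_j=\ww_{j-1}-\vv_{j-1}$ with $\dive\vv_j|_{T_i}(z)=\delta_{ij}$, and sets $\vv=\sum_j a_j\vv_j$, so the pollution at $y$ automatically emerges proportional to $s(a)$ and no solvability discussion is needed; you instead fix the pollution up front by choosing $\beta\propto s(a)/M_e^z$ and then solve the cyclic system $c_{j-1}+c_j=\tilde a_j$ for the tangential coefficients, observing that $\cot(\phi_1)+\cot(\phi_2)=M_e^z$ forces $s(\tilde a)=0$, which is exactly the compatibility needed when $N$ is even (and is harmless when $N$ is odd, where your forward recursion with $c_N=0$ then coincides with the unique solution). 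Your route makes the parity issue and the reason for the precise form of \eqref{zy3} explicit, at the cost of having to verify the solvability identity; the paper's inductive dual-basis route hides that identity inside the telescoping and transfers directly to the boundary-vertex case, which (like the paper) you only treat implicitly. Your norm estimate matches the paper's, including the absorption of the additive constant using the shape-regularity bound $|M_e^z|\le c$. One cosmetic caveat: the signs in \eqref{zy3} depend on the orientation chosen for $\n_e$, so you should fix $\n_e$ pointing out of $T_1$ (as you implicitly do) to land on the stated signs.
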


\begin{proof}
We prove the result in the case $z$ is an interior vertex. The case $z$ is a boundary vertex is similar. We use the notation from Section \ref{scheme}.  We need to define some auxiliary vector fields.  First let 
\begin{equation*}
\rr= 2 |e_1| \kappa_{e_1}^z \n_1,
\end{equation*}
where we recall that the definition of $\kappa_{e_1}^z$ is given in \eqref{defkappa}. Using, \eqref{kappa1} and \eqref{kappa3} it is easy to show that $\rr\in\V_{h,0}^3(z)$.
Then, using \eqref{kappa2} we have  
\begin{equation*}
\dive \rr|_{T_i}(\sigma)=0 \quad \text{ for all } \sigma \in \Sh, \, \sigma \neq z,
\, \sigma \neq y. \\
\end{equation*}
Also, using \eqref{kappa4}, \eqref{nablapsi}, \eqref{hTj} we can show that 
\begin{equation*}
\dive \rr|_{T_1}(z)= \cot(\phi_1), \quad \dive \rr|_{T_2}(z)= -\cot(\phi_2),
\end{equation*}
and
\begin{equation*}
\dive \rr|_{T_1}(y)= -\cot(\theta_1), \quad \dive \rr|_{T_2}(z)= \cot(\theta_2).
\end{equation*}
If we let 
\begin{equation*}
\vv_1= \frac{1}{M_e^z}\big(\rr+\cot(\phi_2) \ww_{1}\big),
\end{equation*}
then using the properties of $\ww_1$ (e.g.~\eqref{w}) we have the following 

\begin{equation*}
\dive \vv_1|_{T_1}(z)=1, \quad \dive \vv_1|_{T_2}(z)=0,
\end{equation*}
\begin{equation*}
\dive \vv_1|_{T_1}(y)=-\frac{\cot(\theta_1)}{M_e^z}, \quad \dive \vv_1|_{T_2}(y)=\frac{\cot(\theta_2)}{M_e^z},
\end{equation*}
and
\begin{equation*}
\|\nabla \vv_1\|_{L^\infty(\Omega_h(z))} \le \frac{C}{|M_e^z|},  
\end{equation*}
where the constant $C$ only depends on the shape regularity constant. 

Next, we define inductively 
\begin{equation*}
\vv_j=\ww_{j-1}-\vv_{j-1} \text{ for }  2 \le j \le N.
\end{equation*}
Using the properties of $\vv_1$ just proved together with \eqref{w},
we can show the following for all $1 \le j \le N$ :
\begin{subequations}
\begin{alignat}{1}
\vv_j \in & \V_{h,0}^3(z), \\
\dive \vv_j (\sigma) &= 0  \quad \text{ for all } \sigma \in \Sh, \, \sigma \neq z, \, \sigma \neq y, \\
\dive \vv_j|_{T_i} (z)&= \delta_{ij}  \quad \text{ for all } 1 \le i, j \le N, \\
\|\nabla \vv_j\|_{L^\infty(\Omega_h(z))} \le & C\Big(1+\frac{1}{|M_e^z|}\Big) \le \frac{C}{|M_e^z|},
\end{alignat}
\end{subequations}
where we used that $|M_e^z| \le c$ is bounded where the  constant $c$ depends on the shape regularity.
By the definition of $\vv_j$ and using that $\dive \ww_j(y)=0$, we note that $\dive \vv_j (y)=(-1)^{j-1} \dive \vv_1(y)$.

We now set
\begin{equation*}
\vv=\sum_{j=1}^N a_j \vv_j.
\end{equation*}
We easily see  conditions \eqref{zy} hold. 
\end{proof}

We can apply the previous result repeatedly to generalize the result for a path. 
\begin{Def}\label{defpath}
Given $z, y \in \Sh$   $P=\{y_0, y_1, \ldots, y_{L}\} $ is a path between $y_0=z$, $y_L=y$ if $e_i=\{ y_{i-1}, y_{i}\} \in \Eh$  and $y_i \neq y_j$ for $i \neq j$.  We say that the path is {\it acceptable } if $M_{e_i}^{y_{i-1}} \neq 0$ for $1 \le i \le L$.
\end{Def}
See Figure \ref{path} for an illustration. For an acceptable path $P$ as in Definition \ref{defpath} we define  for $1 \le j \le L-1 $ 
 \begin{equation*}
\tilde{\rho}_{z,y_j} := \frac{M_{e_1}^{y_1}}{M_{e_1}^{y_0}} \frac{M_{e_2}^{y_2}}{M_{e_2}^{y_1}}  \cdots \frac{M_{e_{j}}^{y_{j}}}{M_{e_{j}}^{y_{j-1}}}. 
\end{equation*}
We also let $\tilde{\rho}_{z,z}=1$. Moreover, we define
\begin{equation}\label{rho}
\rho_{z,y_{j+1}}:=\frac{\tilde{\rho}_{z,y_j}}{M_{e_{j+1}}^{y_{j}}} .
\end{equation}

Finally, we let 
\begin{equation}\label{rhoP}
\rho(P)= \max_{1 \le j \le L}|\rho_{z,y_j}|.
\end{equation}
Also for any collection of vertices $P$ we define $\Th(P)=\{ T \in \Th: T \in \Th(y) \text{  for some  } y \in P \}$. We define $\Omega_h(P)= \bigcup_{T \in \Th(P)}  T$. 

\begin{figure}
\centerline{\includegraphics[width=6in]{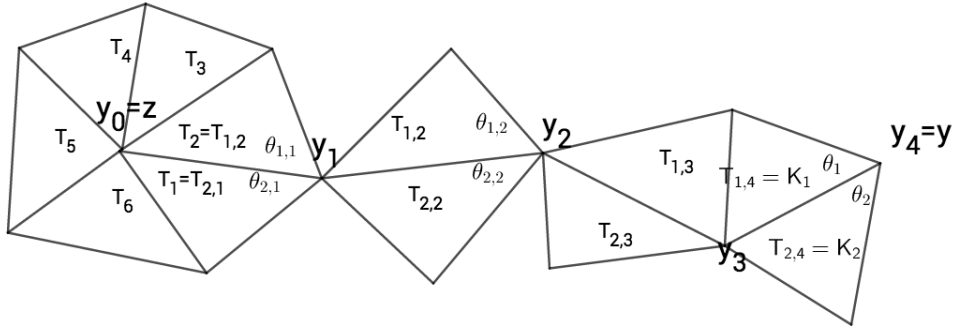}}
\vspace{-140pt}
\caption{Illustration of acceptable path, with $N=6$, L=4.}
\label{path}
\end{figure}

\begin{lemma}\label{lemmapath}
Suppose that $z,y \in \Sh$ and let $\Th(z)=\{ T_1, T_2, \ldots, T_N\}$. Let  $P=\{y_0, y_1, \ldots, y_{L}\} $ with $y_0=z$, $y_L=y$ be an {\it acceptable} path.  Also, we denote by $K_1$ and $K_2$ the two triangles that share $e_L$ and $\theta_i$ the corresponding angles. For every $a=(a_1, \ldots, a_N) \in \R^N$ there exists a $\vv \in \V_{h,0}^3$ with support in $\Omega_h(P)$ such that
\begin{subequations}\label{path}
\begin{alignat}{1}
\dive \vv|_{T_i} (z)&= a_i \quad \text{ for all } 1\le i \le N, \label{path1}\\
\dive \vv(\sigma)&=0 \quad \text{ for all } \sigma \in \Sh, \sigma \neq z, \sigma \neq y, \label{path2} \\
\dive \vv|_{K_1}(y)&= \pm s(a)\tilde{\rho}_{z,y_{L-1}} \frac{\cot(\theta_1)}{M_{e_{L}}^{y_{L-1}}}, \quad  \dive \vv|_{K_2}(y)=  \mp s(a) \tilde{\rho}_{z,y_{L-1}}  \frac{\cot(\theta_2)}{M_{e_{L}}^{y_{L-1}}} \label{path3} \\
\dive \vv|_{T}(y)&= 0 \quad \text{ for all } T \in \Th(y),\, T \neq K_1,\, T \neq K_2, \label{path4} \\
\|\nabla \vv\|_{L^\infty(\Omega_h(P))} \le &  C \rho(P)  \max_{1 \le i \le N}  |a_i|. \label{path5}  
\end{alignat}
\end{subequations}
Here $s(a)=\sum_{j=1}^N (-1)^j a_j$. The constant $C$ only depends on the shape regularity of the mesh.  
\end{lemma}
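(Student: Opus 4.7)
The plan is to proceed by induction on the path length $L$, using Lemma \ref{lemmazy} as the single-step building block. The base case $L=1$ is immediate: the acceptability of $P$ gives $M_{e_1}^{y_0}\neq 0$ and $\tilde{\rho}_{z,y_0}=1$, so Lemma \ref{lemmazy} applied with $z=y_0$, $y=y_1$, $e=e_1$ delivers everything claimed.

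For the inductive step, suppose the statement holds for all acceptable paths of length $L-1$, and let $P=\{y_0,\ldots,y_L\}$ be an acceptable path of length $L$. First I would apply the inductive hypothesis to the sub-path $P'=\{y_0,\ldots,y_{L-1}\}$ with the prescribed target $(a_1,\ldots,a_N)$ at $z=y_0$, producing a $\vv'\in\V_{h,0}^3$ supported in $\Omega_h(P')$, vanishing in divergence at all vertices except $z$ and $y_{L-1}$, and with pollution at $y_{L-1}$ localized to the two triangles $K'_1,K'_2$ sharing $e_{L-1}$ (the divergence values being given by the inductive formula). Next I would apply Lemma \ref{lemmazy} at the vertex $y_{L-1}$ with edge $e_L$, choosing the target vector $\tilde{a}$ to vanish outside $\{K'_1,K'_2\}$ and to equal $-\dive\vv'|_{K'_i}(y_{L-1})$ on $K'_1,K'_2$; the acceptability of $P$ provides $M_{e_L}^{y_{L-1}}\neq 0$, so the lemma applies and produces $\vv''$. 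Setting $\vv=\vv'+\vv''$, the divergence at $z$ is unchanged because $\vv''$ is supported in $\Omega_h(y_{L-1})$ and has zero divergence at every vertex other than $y_{L-1}$ and $y_L$ (so in particular at $z$, since the $y_k$ are distinct); the pollution at $y_{L-1}$ cancels by construction; the divergence at intermediate vertices $y_k$ with $0<k<L-1$ remains zero; and the only new pollution, at $y_L$, is confined to $K_1,K_2$.

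The key bookkeeping is the alternating sum of $\tilde{a}$. Since only two entries in adjacent positions around $y_{L-1}$ are nonzero, summing with alternating signs produces the combination $\cot(\theta'_1)+\cot(\theta'_2)=M_{e_{L-1}}^{y_{L-1}}$, where $\theta'_i$ are the angles of $K'_i$ at $y_{L-1}$. Using the inductive formula for $\dive\vv'|_{K'_i}(y_{L-1})$, this yields
\begin{equation*}
s(\tilde{a})=\pm s(a)\,\tilde{\rho}_{z,y_{L-2}}\cdot\frac{M_{e_{L-1}}^{y_{L-1}}}{M_{e_{L-1}}^{y_{L-2}}}=\pm s(a)\,\tilde{\rho}_{z,y_{L-1}},
\end{equation*}
the sign being determined by the cyclic position of $K'_1,K'_2$ around $y_{L-1}$. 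Substituting into \eqref{zy3} at $y_L$ produces precisely the prefactor $\tilde{\rho}_{z,y_{L-1}}/M_{e_L}^{y_{L-1}}=\rho_{z,y_L}$ appearing in \eqref{path3}. The bound \eqref{zy5} then gives $\|\nabla\vv''\|_{L^\infty}\le C|\rho_{z,y_L}|\max_i|a_i|$, and since any triangle belongs to at most three of the patches $\Omega_h(y_k)$ (a triangle has three vertices and the $y_k$ are distinct), adding this to the inductive bound $\|\nabla\vv'\|_{L^\infty}\le C\rho(P')\max_i|a_i|$ yields \eqref{path5} via $\rho(P)=\max\{\rho(P'),|\rho_{z,y_L}|\}$.

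The main obstacle is the sign bookkeeping: the $\pm$ in \eqref{path3} is governed by the cyclic position of the two pollution triangles around each intermediate vertex $y_k$, and propagating it through the recursion takes care. However, every gradient bound involves absolute values, so the estimate \eqref{path5} is insensitive to these sign choices, and the final statement accommodates the $\pm$ explicitly.
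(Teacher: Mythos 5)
Your construction follows essentially the same route as the paper: iterate Lemma \ref{lemmazy} along the path, at each intermediate vertex cancel the divergence pollution left by the previous step, and use the alternating-sum identity $\cot(\theta_{1})+\cot(\theta_{2})=M_{e}^{y}$ to obtain the recursion $s(\tilde a)=\pm s(a)\,\tilde\rho_{z,y_{j}}$, which produces the factors $\tilde\rho_{z,y_{L-1}}/M_{e_L}^{y_{L-1}}=\rho_{z,y_L}$ in \eqref{path3}. The only organizational difference is that you run an induction on the path length $L$, whereas the paper builds all the correctors $\rr_1,\dots,\rr_L$ (each supported in one patch $\Omega_h(y_{j-1})$, with individual bound $C|\rho_{z,y_j}|\max_i|a_i|$) and sums them at the end; these are the same argument.

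There is, however, a gap in how you derive \eqref{path5}. Your inductive hypothesis records only the aggregate bound $\|\nabla\vv'\|_{L^\infty(\Omega_h(P'))}\le C\rho(P')\max_i|a_i|$, and you then ``add'' the bound for the new piece $\vv''$. Done literally, each inductive step increases the constant by a fixed amount, so after $L$ steps you get $\|\nabla\vv\|_{L^\infty}\le C\,L\,\rho(P)\max_i|a_i|$, which is not \eqref{path5} (the constant there must depend only on shape regularity, and the $L$-dependence would pollute the later tree estimates in Lemma \ref{treelemma}). The bounded-overlap observation you invoke --- each triangle lies in at most three of the patches $\Omega_h(y_k)$ --- is indeed the correct repair, but it cannot be applied to the sum $\vv'$ once the decomposition has been forgotten: you need the \emph{strengthened} inductive statement that $\vv'$ splits as $\sum_j \rr_j$ with each $\rr_j$ supported in $\Omega_h(y_{j-1})$ and $\|\nabla\rr_j\|_{L^\infty}\le C|\rho_{z,y_j}|\max_i|a_i|$, so that pointwise at most three terms contribute and the constant stays uniform in $L$. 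This is exactly how the paper organizes the proof (properties \eqref{rrr} for each piece, summation only at the end, then ``finitely many $\rr_j$ have support in $\Omega_h(y_i)$''). With that adjustment your argument is complete; the rest (acceptability of the subpath, the sign bookkeeping in \eqref{path3}, support and mean-zero properties) is handled correctly.
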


\begin{proof}
Let $a=(a_1, \ldots, a_N) \in \R^N$ be given.  We will use the following notation: $e_j$ is the edge with vertices $y_{j-1}, y_j$. We assume that $\Th(y_j)=\{ T_{1,j}, T_{2,j}, \ldots, T_{N_j, j}\} $ and such that $T_{1,j}, T_{2,j} $ share $e_j$ as  a common edge. The corresponding angles are denoted by $\theta_{1,j}, \theta_{2,j}$.  Note that $K_1=T_{1, L}$ and $K_2=T_{2,L}$. By Lemma \ref{lemmazy} we have  $\rr_1 \in \V_{h,0}^3(y_0)$ 
\begin{subequations}\label{rr}
\begin{alignat}{1}
\dive \rr_1|_{T_i} (y_0)&= a_i \quad \text{ for all } 1\le i \le N \label{rr1}\\
\dive \rr_1(\sigma)&=0 \quad \text{ for all } \sigma \in \Sh,\, \sigma \neq y_0,\, \sigma \neq y_1, \label{rr2} \\
\dive \rr_1|_{T_{1,1}}(y_1)&= -s(a) \frac{\cot(\theta_{1,1})}{M_{e_1}^{y_0}}, \quad  \dive \rr_1|_{T_{2,j}}(y_1)=  s(a) \frac{\cot(\theta_{2,1})}{M_{e_1}^{y_0}} \label{rr3} \\
\dive \rr_1|_{T_{i,1}}(y_1)&= 0 \quad \text{ for all } 3 \le i \le N \label{rr4} \\
\|\nabla \rr_1\|_{L^\infty(\Omega_h(y_0))} & \le  \frac{C }{|M_{e_1}^{y_0}|}
      \max_{1 \le i \le N}  |a_i|. \label{rr5}
\end{alignat}
\end{subequations}
Now suppose that we have constructed $\rr_j \in \V_{h,0}^3(y_{j-1})$ for $j=2, \ldots, \ell$ with the following properties
\begin{subequations}\label{rrr}
\begin{alignat}{1}
  \left(\dive \rr_j+ \dive \rr_{j-1}\right)(y_{j-1})&=0,   \label{rrr1}\\
\dive \rr_j(\sigma)&=0 \quad \text{ for all } \sigma \in \Sh,\, \sigma \neq y_{j-1},\, \sigma \neq y_{j}, \label{rrr2} \\
\dive \rr_j|_{T_{1,j}}(y_j)= \pm s(a)\tilde{\rho}_{z,y_{j-1}}\frac{\cot(\theta_{1,j})}{M_{e_j}^{y_{j-1}}}, & \quad  \dive \rr_j|_{T_{2,j}}(y_j)= \mp s(a)\tilde{\rho}_{z,y_{j-1}} \frac{\cot(\theta_{2,j})}{M_{e_j}^{y_{j-1}}} \label{rrr3} \\
\dive \rr_j|_{T_{i,j}}(y_j)&= 0 \quad \text{ for all } 3 \le i \le N_j, \label{rrr4} \\
\|\nabla \rr_j\|_{L^\infty(\Omega_h(y_{j-1}))} & \le C |\rho_{z,y_j}| \max_{1 \le i \le N}  |a_i|. \label{rrr5}
\end{alignat}
\end{subequations}

Setting  $\tilde{a}_i=\dive \rr_{\ell}|_{T_{i,\ell}}(y_\ell)$ for $1 \le i \le N_\ell$  and  using \eqref{rrr3} and \eqref{rrr4} we have
\begin{alignat}{1}
s(\tilde{a})&=\sum_{i=1}^{N_{\ell}} (-1)^{i-1} \tilde{a}_i= \tilde{a}_1- \tilde{a}_{2}= \pm s(a)\tilde{\rho}_{z,y_{\ell-1}} \frac{\cot(\theta_{1,\ell})}{M_{e_{j}}^{y_{\ell-1}}} \pm s(a)\tilde{\rho}_{z,y_{\ell-1}} \frac{\cot(\theta_{2,\ell})}{M_{e_{\ell}}^{y_{\ell-1}}},  \nonumber\\
&=\pm s(a) \tilde{\rho}_{z,y_{\ell-1}} \frac{\cot(\theta_{1,\ell})+\cot(\theta_{2,\ell})}{M_{e_{j}}^{y_{\ell-1}}} =\pm s(a) \tilde{\rho}_{z,y_{\ell-1}} \frac{M_{e_\ell}^{y_\ell}}{M_{e_{\ell}}^{y_{\ell-1}}}=\pm s(a) \tilde{\rho}_{z,y_\ell}. \label{771}
\end{alignat}

Hence, using Lemma \ref{lemmazy} we can find $\rr_{\ell+1}  \in \V_{h,0}^3(y_\ell)$ such that 
\begin{subequations}\label{rell}
\begin{alignat}{1}
\quad \dive \rr_{\ell+1}|_{T_{i,\ell}} (y_\ell)&= -\tilde{a}_i \quad \text{ for all } 1 \le i \le N_\ell \\
\dive \rr_{\ell+1}(\sigma)&=0 \quad \text{ for all } \sigma \in \Sh, \sigma \neq y_\ell, \sigma \neq y_{\ell+1},  \\
\dive \rr_{\ell+1}|_{T_{1,\ell+1}}(y_{\ell+1})&= \pm s(\tilde{a}) \frac{\cot(\theta_{1,\ell+1})}{M_{e_{\ell+1}}^{y_\ell}}, \quad  \dive \rr_{\ell+1}|_ {T_{2,\ell+1}}(y_{\ell+1})=  \mp s(\tilde{a}) \frac{\cot(\theta_{2,\ell+1})}{M_{e_{\ell+1}}^{y_\ell}}  \\
\|\nabla \rr_{\ell+1} \|_{L^\infty(\Omega_h(y_\ell))} \le &  \frac{C}{|M_{e_{\ell+1}}^{y_\ell}|}
    \max_{1 \le i \le N_\ell}  |\tilde{a}_i|  \le \frac{C}{|M_{e_{\ell+1}}^{y_\ell}|} |s(a)| |\tilde{\rho}_{z,y_\ell}|=C |s(a)||\rho_{z,y_{\ell+1}}|.   
\end{alignat}
\end{subequations}
Hence, using \eqref{rell} and \eqref{771} we get that \eqref{rrr} holds for $j=\ell+1$ and hence by induction \eqref{rrr} holds for $2 \le j \le L$.

We let $\vv=\rr_1+ \rr_2+ \cdots+ \rr_L$. We then easily see that \eqref{path1}-\eqref{path4} hold from \eqref{rrr} and \eqref{rr}. To prove \eqref{path5} we use that 
\begin{equation*}
\|\nabla \vv\|_{L^\infty(\Omega_h(P))} \le  \max_{1 \le i \le L} \|\nabla \vv\|_{L^\infty(\Omega_h(y_i))}. 
\end{equation*}
Since finitely many $\{\rr_j\}$ have support in $\Omega_h(y_i)$ the result follows from  \eqref{rrr5} and \eqref{rr5}.
\end{proof}

The next task is to interpolate values on a tree of the mesh. We need some notation.  
\begin{definition}
We say that $\tr(r):=  Y \times E$ with $Y= \bigcup_{ 0 \le j \le M} Y_j$ and $E=\bigcup_{ 1 \le j \le M}  E_j $ is a tree of $\Th$ with root $r$ if the following hold
\begin{alignat*}{1}
 1)&\  Y_0=\{r\}, \\
 2)&\  \text{for every } 1 \le j \le M,\, Y_j \subset \Sh, E_j \subset \Eh  \text{ and } |Y_j|=| E_j |, \\
 3)&\  \text{for every } y \in Y_j \text{ there is a  unique } e \in E_j \text{ such that } e=\{ y, z\} \text{ for some } z \in Y_{j-1}, \\
 4)&\ Y_j \cap Y_i = \emptyset  \text{ for }  i \neq j .
\end{alignat*}
\end{definition}

If $y \in Y_j$ and $z \in Y_{j+s}$ for $s \ge 1$ and there is a path in $\tr(r)$ connecting $y$ to $z$ then we say that $z$ is a {\it descendant}   of $y$ and that $y$ is an {\it ancestor} of $z$. If we let $y_0=z$, $y_s=y$, by path we mean  $P_{z,y} =\{ y_0, y_1, \ldots, y_s\}$ such that $y_i \in Y_{j+s-i}$ for $0 \le i \le s$ such that $e_i=\{  y_{i-1}, y_{i}\} \in  E_{j+s-(i-1)}$ for $1 \le i \le s$. We let $\D(y)$ denote the set of all descendants of $y$ and $\A(y)$ to be all the ancestors of $y$.  We know that if $z \in Y$ then there is a unique path $P_{z,r}$ (which we denote by $P_z$) from $z$ to the root $r$. We say that the tree $\tr(r)$ is {\it acceptable} if $P_{z}$ is acceptable for each $z \in Y$. Moreover, we define
\begin{equation}
\rho(\tr(r))= \max_{z \in Y} \rho(P_{z}),
\end{equation}
where $\rho(P_{z})$ is defined in \eqref{rhoP}.

We can now state the following result.

\begin{lemma}\label{treelemma}
Let $\tr(r)= Y \times E$ with $Y= \bigcup_{ 0 \le j \le M} Y_j$ and $E=\bigcup_{ 1 \le j \le M}  E_j $ be an acceptable tree of $\Th$ with root $r \in \Lh$.  Then, for any $p \in Q_h^2$ there exist $\vv \in \V_{h,0}^3$ such that  
\begin{subequations}\label{tree}
\begin{alignat}{1}
\text{ support}(\vv)\subset &\, \Omega_h(Y) \label{tree1} \\
(\dive \vv-p)(\sigma)&= 0 \quad \text{ for all } \sigma \in Y \label{tree2} \\
\dive \vv (\sigma) &= 0 \quad\text{ for all } \sigma\in \Sh \backslash Y .\label{tree3}
\end{alignat}
\end{subequations}
If in addition $\Th$ is quasi-uniform the following bound holds
\begin{equation}\label{tree4}
\|\nabla \vv\|_{L^2(\Omega_h(Y))} \le  C \Big(1+D_r\Big) \Big(1+\Upsilon(\tr(r)) \rho(\tr(r))\Big)\|p\|_{L^2(\Omega_h(Y))}, 
\end{equation}
where $\Upsilon(\tr(r))^2:= \max_{z \in Y} \left( \sum_{ y \in \A(z)} |\D(y)|\right)$. We recall that $D_r$ is given in \eqref{aux123}.
\end{lemma}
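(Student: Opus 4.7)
The plan is to build $\vv=\vv_1+\vv_2$ in two stages: first handle every non-root vertex of $Y$ with Lemma~\ref{lemmapath} along tree paths so that all ``pollution'' is pushed to the root, then correct the residual at $r$ using the fact that $r\in\Lh$. For each $z\in Y\setminus\{r\}$, let $P_z=\{z=y_0,y_1,\ldots,y_L=r\}$ be the unique tree path from $z$ to $r$; by hypothesis this path is acceptable. Enumerating $\Th(z)=\{T_1,\ldots,T_N\}$ and setting $a_i^z=p|_{T_i}(z)$, Lemma~\ref{lemmapath} produces $\vv^z\in\V_{h,0}^3$ supported in $\Omega_h(P_z)\subset\Omega_h(Y)$ such that $\dive\vv^z|_{T_i}(z)=p|_{T_i}(z)$, $\dive\vv^z$ vanishes at every vertex of $\Sh$ different from $z$ and $r$, and the only possibly-nonzero values of $\dive\vv^z$ at $r$ sit on the two triangles sharing the last edge of $P_z$. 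Setting $\vv_1=\sum_{z\in Y\setminus\{r\}}\vv^z$, one immediately obtains $\dive\vv_1(z)=p(z)$ for every $z\in Y\setminus\{r\}$ and $\dive\vv_1(\sigma)=0$ for every $\sigma\in\Sh\setminus Y$; the only remaining discrepancy is located at $r$.

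Next, set $\tilde p=p-\dive\vv_1$. Since $\dive\V_h^3\subset Q_h^2$ by Lemma~\ref{lemma1}, $\tilde p\in Q_h^2$, and by the previous step $\tilde p$ vanishes at every vertex of $\Sh$ except possibly at $r$, so only the values $\tilde p|_T(r)$ for $T\in\Th(r)$ can be nonzero. If $r$ is singular, $\tilde p\in Q_h^2$ forces $A_h^r(\tilde p)=0$, which is exactly the condition that the tuple $(\tilde p|_{T_1}(r),\ldots,\tilde p|_{T_N}(r))$ lie in $W(r)$. Since $r\in\Lh$, Definition~\ref{localinterpolate} yields $\vv_2\in\V_{h,00}^3(r)$ with $\dive\vv_2|_T(r)=\tilde p|_T(r)$ for each $T\in\Th(r)$ and $\|\nabla\vv_2\|_{L^\infty(\Omega_h(r))}\le D_r\max_T|\tilde p|_T(r)|$. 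Then $\vv=\vv_1+\vv_2\in\V_{h,0}^3$ satisfies \eqref{tree1}--\eqref{tree3} by inspection.

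For the norm bound \eqref{tree4}, quasi-uniformity is used precisely to absorb the inverse-estimate factor $h^{-1}$ against the $L^2$-to-$L^\infty$ conversion factor $h$: combining \eqref{path5} with an inverse inequality gives $\|\nabla\vv^z\|_{L^2(T)}\le C\rho(\tr(r))\|p\|_{L^2(\Omega_h(z))}$ uniformly in $T\subset\Omega_h(P_z)$. Summing, on each patch $\Omega_h(y)$ with $y\in Y$ the contributors are exactly the vector fields $\vv^{z'}$ with $y\in P_{z'}$, i.e., $z'\in\{y\}\cup\D(y)$; a triangle-level Cauchy--Schwarz on this multiplicity followed by a swap of summation order gives, for each $z'$, a factor $\sum_{y\in\A(z')\cup\{z'\}}(|\D(y)|+1)$, which is bounded by a constant multiple of $\Upsilon(\tr(r))^2$ via $|\A(z')|,|\D(z')|\le\sum_{y\in\A(z')}|\D(y)|$. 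This yields $\|\nabla\vv_1\|_{L^2(\Omega_h(Y))}\le C\Upsilon(\tr(r))\rho(\tr(r))\|p\|_{L^2(\Omega_h(Y))}$, and combining with $\|\nabla\vv_2\|_{L^2(\Omega_h(r))}\le CD_r\|\tilde p\|_{L^2(\Omega_h(r))}\le CD_r(1+\Upsilon\rho)\|p\|_{L^2(\Omega_h(Y))}$ produces \eqref{tree4}. The main obstacle is precisely this overlap accounting: the naive estimate $\sum_z\|\nabla\vv^z\|_{L^2}$ would yield a factor of $|Y|$ in place of $\Upsilon$, so the Cauchy--Schwarz must be carried out at the triangle level and in the correct order to recover the sharp factor claimed in the lemma.
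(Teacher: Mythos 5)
Your proposal is correct and follows essentially the same route as the paper: push all pollution along the acceptable tree paths to the root via Lemma~\ref{lemmapath}, correct the residual at $r$ using $r\in\Lh$, and control the sum by inverse estimates (using quasi-uniformity so that $h_y\le C\,h_z$ along each path) together with a Cauchy--Schwarz on the patch multiplicities and a swap of summation order bounded by $\Upsilon(\tr(r))$. Your explicit remark that the residual tuple at $r$ lies in $W(r)$ when $r$ is singular (since $p-\dive\vv_1\in Q_h^2$) is a small point the paper leaves implicit, but it is not a different argument.
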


\begin{proof}
For every $z \in Y$ with $z \neq r$  there is a unique path $P_z \subset Y$ that connects $z$ to $r$. By Lemma \ref{lemmapath} we can find $\vv_z \in \V_{h, 0}^3$  such that 
\begin{subequations}\label{auxx}
\begin{alignat}{1}
\text{ support}(\vv_z)\subset  & \, \Omega_h(P_z), \label{auxx0} \\
(\dive \vv_z-p) (z)&= 0 \label{auxx1}\\
\dive \vv_z(\sigma)&=0 \quad \text{ for all } \sigma \in \Sh, \,\sigma \neq z, \,\sigma \neq r, \label{auxx2} \\
\|\nabla\vv_z\|_{L^\infty(\Omega_h(P_z))} \le  &C \, \rho(P_z) \max_{T \in \Th(z)} |p|_T(z)|,   \label{auxx3}
\end{alignat}
\end{subequations}
where $C$  only depends on the shape regularity of the mesh. 

Note that from \eqref{auxx3} and inverse estimates we get that for any $y \in \A(z)$, assuming $h_y \le C \, h_z$ (which holds if the mesh is quasi-uniform) we have
\begin{equation}\label{inq106}
\|\nabla \vv_z\|_{L^2(\Omega_h(y))} \le C  \rho(P_z) \|p\|_{L^2(\Omega_h(z))}.
\end{equation}
Then, we take 
\begin{equation*}
\ww=\sum_{z \in Y,  z \neq r} \vv_z . 
\end{equation*}
We then have the following properties of $\ww$
\begin{alignat}{2}
(\dive \ww-p) (y)&= 0 \quad && \text{ for all } y \in Y \backslash \{r\}\\
\text{ support}(\ww)\subset & \,\Omega_h(Y) \quad &&  \\
\dive \ww(\sigma)=&0, \quad && \sigma \notin Y. 
\end{alignat}
Since $\rr \in \Lh$ (using Definition \ref{localinterpolate})
 we can find $\rr \in \V_{h, 00}^3(r)$ so that 
\begin{equation}
(\dive \rr +\dive \ww-p) (r) = 0 
\end{equation}
\begin{equation}
\|\nabla \rr\|_{L^\infty(\Omega_h(r))} \le D_r \max_{T \in \mathcal{T}_h(r)} |(\dive \ww-p)|_T(r)| .
\end{equation}
In this case, using inverse estimates, we can show that 
\begin{equation}\label{auxr}
\|\nabla \rr\|_{L^2(\Omega_h(r))} \le C D_r (\|\nabla \ww\|_{L^2(\Omega_h(r))}+ \|p\|_{L^2(\Omega_h(r))}).
\end{equation}

We next set $\vv= \ww+\rr$ and we see that $\vv \in   \V_{h, 0}^3$ and \eqref{tree1}, \eqref{tree2} and \eqref{tree3} hold.

To get the bound \eqref{tree4} we assume that $\Th$ is quasi-uniform. Using the triangle inequality and \eqref{auxr}
\begin{equation*}
\begin{split}
\|\nabla \vv\|_{L^2(\Omega_h(Y))} 
&\le \|\nabla \ww\|_{L^2(\Omega_h(Y))}+ \|\nabla \rr\|_{L^2(\Omega_h(Y))} \\
&\le (1+C \, D_r) \big( \|\nabla \ww\|_{L^2(\Omega_h(Y))}+\|p\|_{L^2(\Omega_h(r))}\big).
\end{split}
\end{equation*}
Next, we estimate $\ww$:
\begin{alignat*}{2}
\|\nabla \ww\|_{L^2(\Omega_h(Y))}^2 \le & \sum_{y \in Y} \|\nabla \ww\|_{L^2(\Omega_h(y))}^2   && \\
\le &   \sum_{y \in Y}   \|\sum_{z \in \D(y)} \nabla \vv_z \|_{L^2(\Omega_h(y))}^2 && \text{ by } \eqref{auxx0} \\
\le &    \sum_{y \in Y}   (\sum_{z \in \D(y)} \| \nabla \vv_z \|_{L^2(\Omega_h(y))})^2 && \text{ by the triangle inequality }  \\
\le &   \sum_{y \in Y} |\D(y)| \sum_{z \in \D(y)}   \|\nabla \vv_z \|_{L^2(\Omega_h(y))}^2 && \text{ by H\"older's inequality }  \\
\le &  \, C  \rho(\tr(r))^2 \sum_{y \in Y} |\D(y)|  \sum_{z \in \D(y)}   \|p \|_{L^2(\Omega_h(z))}^2 && \text{ by } \eqref{inq106} \\
\le &  \, C  \rho(\tr(r))^2 \sum_{z \in Y}  \|p\|_{L^2(\Omega_h(z))}^2 \sum_{ y \in \A(z)} |\D(y)|    && \text{ interchange summation } \\
\le & 3 \, C \rho(\tr(r))^2 \Upsilon(\tr(r))^2 \|p\|_{L^2(\Omega_h(Y))}^2 && \text{ by definition of } \Upsilon(\tr(r)). 
\end{alignat*}
Taking square roots we get 
\begin{equation*}
\|\nabla \ww\|_{L^2(\Omega_h(Y))} \le C \, \rho(\tr(r))\, \Upsilon(\tr(r)) \|p\|_{L^2(\Omega_h(Y))}.
\end{equation*}
The result now follows.

\end{proof}

The next result follows immediately from the previous lemma.

\begin{theorem}\label{mainthm}
Suppose that we have $\{r_1, \ldots, r_t \} \in \Lh$ and corresponding acceptable trees\\ $\{\tr(r_1), \tr(r_2), \ldots, \tr(r_t) \}$. If $\tr(r_i)= Z_i \times E_i$ we require that  $\bigcup_{i=1}^t Z_i= \Sh$ and $Z_i \cap Z_j = \emptyset$  for $i \neq j$.  Then, for any $p \in Q_h^2$ there exist $\vv \in \V_{h,0}^3$ such that 
\begin{equation*}
(\dive \vv-p)(\sigma)= 0 \quad \text{ for all } \sigma \in \Sh 
\end{equation*}
and if $\Th$ is quasi-uniform 
\begin{equation*}
\|\nabla \vv\|_{L^2(\Omega)} \le  C (1+\bar{D})  (1+\bar{\Upsilon} \bar{\rho})\|p\|_{L^2(\Omega)}. 
\end{equation*}
where 
\begin{equation}
\bar{D}=  \max_{ 1\le i \le t} D_{r_i}, \quad \bar{\Upsilon} =  \max_{ 1\le i \le t} \Upsilon(\tr(r_i)),  \quad \bar{\rho} =  \max_{ 1\le i \le t} \rho(\tr(r_i)).
\end{equation}

\end{theorem}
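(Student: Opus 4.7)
The plan is to apply Lemma \ref{treelemma} tree by tree and then simply sum the resulting vector fields, using shape regularity to control the (finite) overlap of the patches $\Omega_h(Z_i)$. First, for each $i=1,\dots,t$, since $r_i \in \Lh$ and $\tr(r_i)=Z_i\times E_i$ is acceptable with root $r_i$, Lemma \ref{treelemma} produces $\vv_i \in \V_{h,0}^3$ with $\operatorname{support}(\vv_i) \subset \Omega_h(Z_i)$, with $(\dive \vv_i - p)(\sigma)=0$ for all $\sigma \in Z_i$, with $\dive \vv_i(\sigma)=0$ for all $\sigma \in \Sh \setminus Z_i$, and with the bound
\begin{equation*}
\|\nabla \vv_i\|_{L^2(\Omega_h(Z_i))} \le C (1+D_{r_i})\bigl(1+\Upsilon(\tr(r_i))\,\rho(\tr(r_i))\bigr) \|p\|_{L^2(\Omega_h(Z_i))}.
\end{equation*}

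Next, set $\vv=\sum_{i=1}^t \vv_i \in \V_{h,0}^3$. For any $\sigma \in \Sh$, the partition hypothesis $\bigcup_i Z_i=\Sh$ and $Z_i\cap Z_j=\emptyset$ guarantees that $\sigma$ lies in exactly one $Z_{i_0}$. Then $\dive \vv(\sigma)=\dive \vv_{i_0}(\sigma) + \sum_{j\neq i_0}\dive \vv_j(\sigma)=p(\sigma)+0$, which establishes the interpolation identity. Using the definitions of $\bar{D}$, $\bar{\Upsilon}$, and $\bar{\rho}$ we obtain the uniform bound
\begin{equation*}
\|\nabla \vv_i\|_{L^2(\Omega)} \le C (1+\bar{D})(1+\bar{\Upsilon}\,\bar{\rho}) \|p\|_{L^2(\Omega_h(Z_i))}.
\end{equation*}

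The only remaining issue is to pass from the sum $\sum_i \|\nabla \vv_i\|_{L^2(\Omega)}^2$ to $\|\nabla \vv\|_{L^2(\Omega)}^2$, and from $\sum_i \|p\|_{L^2(\Omega_h(Z_i))}^2$ to $\|p\|_{L^2(\Omega)}^2$. For this, observe that each triangle $T \in \Th$ has only three vertices, so $T$ belongs to at most three of the patches $\Omega_h(Z_i)$ (since the $Z_i$ partition $\Sh$, each vertex of $T$ lies in exactly one $Z_i$). Hence on any $T$ at most three of the $\vv_i$ are nonzero, which gives by Cauchy-Schwarz both $\|\nabla \vv\|_{L^2(T)}^2 \le 3\sum_i \|\nabla \vv_i\|_{L^2(T)}^2$ and $\sum_i \|p\|_{L^2(\Omega_h(Z_i)\cap T)}^2 \le 3\|p\|_{L^2(T)}^2$. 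Summing over $T$ yields $\|\nabla \vv\|_{L^2(\Omega)}^2 \le 3\sum_i \|\nabla \vv_i\|_{L^2(\Omega)}^2$ and $\sum_i \|p\|_{L^2(\Omega_h(Z_i))}^2 \le 3 \|p\|_{L^2(\Omega)}^2$. Combining these with the bound for each $\vv_i$ produces the stated estimate (absorbing the constant $3$ into $C$).

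No part of this argument is truly hard; the only delicate point is the overlap counting in the last paragraph, which relies on the disjointness of the $Z_i$ rather than shape regularity per se. The substantive content has already been carried by Lemma \ref{treelemma}, so this theorem is essentially a bookkeeping corollary of it.
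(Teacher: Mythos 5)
Your proof is correct and takes essentially the same route as the paper, which simply asserts that Theorem \ref{mainthm} ``follows immediately'' from Lemma \ref{treelemma}: you apply the tree lemma to each $\tr(r_i)$, sum the resulting fields, use the disjointness and covering of the $Z_i$ for the vertex interpolation, and control the global norm by the (at most threefold) overlap of the patches $\Omega_h(Z_i)$. The overlap-counting step you spell out is precisely the bookkeeping the paper leaves implicit.
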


Using Theorem \ref{mainthm} and Lemma \ref{lemma4} we have that the inf-sup condition \eqref{inf-sup} holds for $Q_h^2 \times \V_h^3$. 
 
\begin{corollary}\label{maincor}
Assuming the hypothesis Theorem \ref{mainthm} then the estimate \eqref{inf-sup} 
holds for $Q_h^2 \times \V_h^3$ with constants given by Lemma \ref{lemma4} 
and Theorem \ref{mainthm}.
\end{corollary}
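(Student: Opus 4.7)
The plan is to simply verify that the hypothesis of Lemma \ref{lemma4} is met, with a specific $\alpha_3$ extracted from Theorem \ref{mainthm}, and then read off the inf-sup constant from Lemma \ref{lemma4}.

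First, I would fix an arbitrary $p \in Q_h^2$ and invoke Theorem \ref{mainthm} with the given acceptable trees $\{\tr(r_1),\dots,\tr(r_t)\}$ rooted at points of $\Lh$ and whose vertex sets $Z_i$ partition $\Sh$. This produces a $\vv \in \V_{h,0}^3$ with $(\dive \vv - p)(\sigma) = 0$ for every $\sigma \in \Sh$ and with
\begin{equation*}
\|\nabla \vv\|_{L^2(\Omega)} \le C(1+\bar D)(1+\bar\Upsilon\bar\rho)\|p\|_{L^2(\Omega)}.
\end{equation*}
This is almost exactly the condition appearing in Lemma \ref{lemma4}; the only gap is that Lemma \ref{lemma4} requires a bound in the full $H^1$ norm.

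The second step closes that gap. Since $\vv \in \V_h^3 \subset [C_0(\Omega)]^2$, the velocity vanishes on $\partial \Omega$, so Poincaré's inequality yields $\|\vv\|_{L^2(\Omega)} \le C_\Omega\|\nabla \vv\|_{L^2(\Omega)}$, and therefore
\begin{equation*}
\|\vv\|_{H^1(\Omega)} \le (1+C_\Omega)\,C(1+\bar D)(1+\bar\Upsilon\bar\rho)\|p\|_{L^2(\Omega)}
=:\alpha_3\|p\|_{L^2(\Omega)}.
\end{equation*}
Thus the hypothesis of Lemma \ref{lemma4} holds with this value of $\alpha_3$.

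Finally, applying Lemma \ref{lemma4} gives \eqref{inf-sup} for $k=3$ with
\begin{equation*}
\beta = \frac{1}{\alpha_1+\alpha_3(1+\alpha_1)+\alpha_2(1+\alpha_3(1+\alpha_1))},
\end{equation*}
where $\alpha_1$, $\alpha_2$ are the structural constants from Proposition \ref{prop1} and Proposition \ref{lemma3}. There is no real obstacle here: the whole point of the development culminating in Theorem \ref{mainthm} was to arrange that the hypothesis of Lemma \ref{lemma4} is satisfied, so the corollary is essentially a composition of the two. The only point that deserves explicit mention is the Poincaré step converting the gradient bound to the full $H^1$ bound, and the resulting dependence of $\beta$ on $\bar D$, $\bar\Upsilon$, $\bar\rho$ and the shape regularity/domain constants.
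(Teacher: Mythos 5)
Your argument is correct and matches the paper's (unwritten, immediate) proof: Corollary \ref{maincor} is just the composition of Theorem \ref{mainthm}, which furnishes the hypothesis of Lemma \ref{lemma4}, with Lemma \ref{lemma4} itself. Your explicit Poincar\'e step to pass from $\|\nabla\vv\|_{L^2(\Omega)}$ to $\|\vv\|_{H^1(\Omega)}$ is a harmless extra detail the paper leaves implicit.
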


From Theorem \ref{mainthm} and Lemma \ref{lemma4} we deduce that if the hypotheses
of Theorem \ref{mainthm} hold then $Q_h^2 \times \V_h^3$ is inf-sup stable  
(see Corollary \ref{maincor}) and in fact that $\dive \V_h^3=Q_h^2$. 
If we do not care about the inf-sup constant in \eqref{inf-sup} and only care 
if $\dive \V_h^3=Q_h^2$ then we can give weaker conditions. 
Inspecting the proof of Lemma \ref{treelemma} (and using Lemma \ref{lemma4}),
we can show the following.

\begin{theorem}\label{mildconditions}.
If for every $z \in \Sh$ there is exists an $r \in \Lh$ such that  there is an acceptable path $P_{z,r}$ between them, then $\dive \V_h^3=Q_h^2$.
\end{theorem}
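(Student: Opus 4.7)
The plan is to reduce the statement to producing, for an arbitrary $p \in Q_h^2$, a vector field $\vv \in \V_{h,0}^3$ with $(\dive\vv - p)(\sigma) = 0$ for every $\sigma \in \Sh$. Inspecting the proof of Lemma~\ref{lemma4}, combining such a $\vv$ with the vector fields from Propositions~\ref{prop1} and~\ref{lemma3} produces a preimage of $p$ under $\dive$, so $\dive\V_h^3 = Q_h^2$; the quantitative bound demanded in Lemma~\ref{lemma4} is not needed here since only surjectivity is at stake.

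Using the hypothesis, I first fix a subset $\Lambda \subset \Lh$ and, for each $z \in \Sh \setminus \Lambda$, an acceptable path $P_z$ from $z$ to some root $r(z) \in \Lambda$ (taking $\Lambda = \Lh$ works, but any choice satisfying the covering property does). For each such $z$, I apply Lemma~\ref{lemmapath} along $P_z$ with the data $a^z = (p|_T(z))_{T \in \Th(z)}$. This yields $\vv_z \in \V_{h,0}^3$ with $\dive\vv_z|_T(z) = p|_T(z)$ for every $T \in \Th(z)$, $\dive\vv_z(\sigma) = 0$ for every $\sigma \in \Sh \setminus \{z, r(z)\}$, and a residual at $r(z)$ confined to the two triangles sharing the final edge of $P_z$.

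Setting $\ww = \sum_{z \in \Sh \setminus \Lambda} \vv_z$ and $p_1 = p - \dive\ww$, by construction $p_1(z) = 0$ for every $z \in \Sh \setminus \Lambda$. Since each $\vv_z \in \V_h^3$, Lemma~\ref{lemma1} gives $\dive\ww \in Q_h^2$, hence $p_1 \in Q_h^2$; in particular the alternating-sum constraint $A_h^r(p_1) = 0$ holds at every singular root, which is exactly the statement that $(p_1|_T(r))_{T \in \Th(r)} \in W(r)$ for every $r \in \Lambda$. Because $r \in \Lh$, Definition~\ref{localinterpolate} then supplies $\vv_r \in \V_{h,00}^3(r)$ satisfying $\dive\vv_r|_T(r) = p_1|_T(r)$ for every $T \in \Th(r)$. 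The field $\vv = \ww + \sum_{r \in \Lambda} \vv_r$ then lies in $\V_{h,0}^3$ and satisfies $(\dive\vv - p)(\sigma) = 0$ for all $\sigma \in \Sh$, as required.

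The only point requiring care is the verification that $(p_1|_T(r))_T \in W(r)$ at singular roots, which as noted reduces to $p_1 \in Q_h^2$ via Lemma~\ref{lemma1}; the rest is bookkeeping. The central simplification is that because no quantitative bound is sought, the factors $\rho(P_z)$, $\Upsilon(\tr(r))$, and the disjointness requirement $Z_i \cap Z_j = \emptyset$ that drive Theorem~\ref{mainthm} and Lemma~\ref{treelemma} are irrelevant — the paths $P_z$ may overlap arbitrarily and need not be organised into disjoint trees, which is precisely what weakens the hypothesis from Theorem~\ref{mainthm} to Theorem~\ref{mildconditions}.
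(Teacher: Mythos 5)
Your proof is correct and follows essentially the same route the paper intends: it is the non-quantitative version of Lemma \ref{treelemma} (push vertex values along acceptable paths via Lemma \ref{lemmapath} to roots in $\Lh$, correct at the roots by local interpolation, then conclude surjectivity through the argument of Lemma \ref{lemma4} with Propositions \ref{prop1} and \ref{lemma3}), which is exactly what the paper means by ``inspecting the proof of Lemma \ref{treelemma}.'' Your explicit verification that the residual values at a singular root lie in $W(r)$, via $p_1\in Q_h^2$ and Lemma \ref{lemma1}, fills in the one detail the paper leaves implicit.
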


\section{Relationship to Qin's result}
\label{sec:relateqin}

Results concerning the pair of spaces $V^k_h,\, Q^{k-1}_h$ were given 
in \cite{ref:QinThesis} for $k\leq 3$.
Here we review the case $k=3$.
For the case $k=1$, also see \cite{ref:QinZhangCrossedTriangles}, and 
for the case $k=2$, see \cite{arnold1992quadratic}.
Qin considered the mesh in Figure \ref{fig:fortyfive}, which is called a
Type I triangulation \cite{lai2007spline}.
Of course, the upper-left and lower-right triangles are problematic,
since the pressures will vanish at the corner vertices there.
But more interestingly, Qin found an additional spurious pressure mode
as indicated in Figure \ref{fig:spurious}(a).
We can relate this to the quantities $\decvar_i$ in \eqref{bigd} by computing them
for this mesh, as indicated in Figure \ref{fig:spurious}(b).
There are only two angles in this mesh, $\pi/4$ and $\pi/2$, and
$\cot(\pi/4)=1$ and $\cot(\pi/2)=0$.
Similarly, the edge lengths are $L$ and $L\sqrt{2}$, for some $L$.
Thus the quantities $d_{j0}$ in \eqref{Psiz} are of the form $\pm A$
where $A=1/2L^2$ for the $\pi/4$ angles, and 0 for the $\pi/2$ angles,
as indicated in Figure \ref{fig:spurious}(b).
Computing the alternating sum of terms in \eqref{Psiz}, we get 
\begin{equation}\label{eqn:deejayzed}
\decvar_0=\sum_{j=1}^6 (-1)^j d_{j0}=A-(-A)+0-A+(-A)=0.
\end{equation}
Thus condition \eqref{bigd} is violated for $i=0$ for all the interior vertices in 
Figure \ref{fig:fortyfive}.

Now let us compute $\decvar_i$  for $i=1,2$.  First, we note that the sequence of vertices $y_k$ for a fixed interior vertex $z$
\begin{alignat*}{1}
y_1=&L(1,0)+z,\;y_2=L(1,1)+z,\;y_3=L(0,1)+z, \\
y_4=&L(-1,0)+z,\; y_5=L(-1,-1)+z,\; y_6=L(0,-1)+z.
\end{alignat*}
Thus,
\begin{equation}\label{eqn:sumyytuu}
\sum_{j=1}^6  (-1)^j y_j=
%-(1,0)+(1,1)-(0,1)+(-1,0)-(-1,-1)+(0,-1)=
L(-1+1-1+1,1-1+1-1)+z \sum_{j=1}^6 (-1)^j=\bfz.
\end{equation}
Letting $t_j=\cot(\theta_j)+\cot(\theta_{j+1})$, we easily can show
$$ t_1=1,\; t_2=2,\; t_3=1,\; t_4=1,\; t_5=2,\; t_6=1.  $$
Also, we have the sequence of values $|e_j|^{-2}$ are
$$
|e_1|^{-2}=\frac{1}{L^2} ,\;|e_2|^{-2}=\frac{1}{2L^2},\;|e_3|^{-2}=\frac{1}{L^2} ,\; 
|e_4|^{-2}= \frac{1}{L^2},\; |e_5|^{-2}=\frac{1}{2L^2},\; |e_6|^{-2}=\frac{1}{L^2}.
$$
Hence, $\frac{(\cot(\theta_j)+\cot(\theta_{j+1}))}{|e_j|^2}=\frac{1}{L^2}$ for all $j$. Also, $|T_j|=|T_1|$ for all $j$. Hence, using \eqref{eqn:alldeemone} , \eqref{eqn:deejayzed}, and \eqref{eqn:sumyytuu} we have for $i=1,2$ 
\begin{equation*}
\decvar_i=\left(\frac{4}{L^2}-\frac{2}{|T_1|}\right) \sum_{j=1}^n (-1)^j y_j \cdot \cde^\perp_i=0. 
\end{equation*}

Thus condtion \eqref{bigd} is violated for all $i=0,1,2$ for all interior vertices in 
Figure \ref{fig:fortyfive}. This suggests that the constraint \eqref{bigd} maybe required for inf-sup stability.

\begin{figure}
\centerline{(a)\includegraphics[width=1.4in]{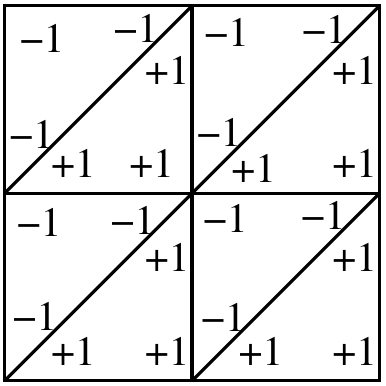}
\qquad\qquad (b) \includegraphics[width=1.4in]{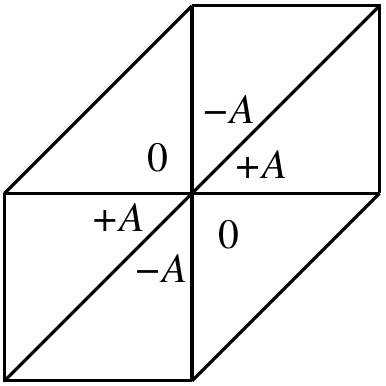}}
\caption{(a) A global spurious pressure mode on the mesh in Figure \ref{fig:fortyfive}.
(b) Computation of \eqref{eqn:deejayzed}.}
\label{fig:spurious}
\end{figure}

\newcommand{\Z}{{\bf Z}}

\section{Strang's Dimension}
\label{sec:strangdim}

For simplicity, let us assume that $\Omega$ is simply connected.
Then the space $\Z_h^3:= \{ \vv \in \V_h^3: \dive \vv=0 \}$ is the curl 
of the space $S^4_h$ of $C^1$ piecewise quartics on the same mesh, 
where the quartics must vanish to second order on the boundary.
The dimension of the space $\widehat S^4_h$ of $C^1$ piecewise quartics,
without boundary conditions, is known \cite{lrsBIBbv,alfeld1987explicit} 
for  a broad class of triangulations $\Th$ to be
\begin{equation} \label{eqn:strangdim}
\dim \widehat S^4_h=  E + 4V - V_0 + \sigma_i,
\end{equation}
where $T$ is the number of triangles in $\Th$, 
$E$ (resp., $E_0$) is the number of edges (resp., interior edges) in $\Th$, 
$V$ (resp., $V_0$) is the number of vertices (resp., interior vertices)
in $\Th$, and $\sigma_i$ is the number of singular interior vertices in $\Th$.
%We have $E-E_0=V-V_0$.
 
The dimension formula \eqref{eqn:strangdim} is essentially the one conjectured by
Gil Strang \cite{strang1973piecewise}, so we refer to this as 
the {\em Strang dimension} of $\widehat S^4_h$:
\begin{equation} \label{eqn:dimzeecomph}
\sdim(\widehat S^4_h)=E + 4V - V_0 + \sigma_i.
\end{equation}
For $C^1$ piecewise polynomials of degree $k\geq 5$, the Strang dimension 
was confirmed using an explicit basis \cite{lrsBIBaf}.
But the Strang dimension has not yet been confirmed for arbitrary meshes for $k\leq 4$.
However, what is known is that it provides a lower bound \cite{lrsBIBbv}
\begin{equation} \label{eqn:lowbnddim}
\dim \widehat S^4_h\geq \sdim(\widehat S^4_h).
\end{equation}

\subsection{Computing $\dim S^4_h$}

\begin{figure}
\centerline{(a)\quad\includegraphics[scale=.65]{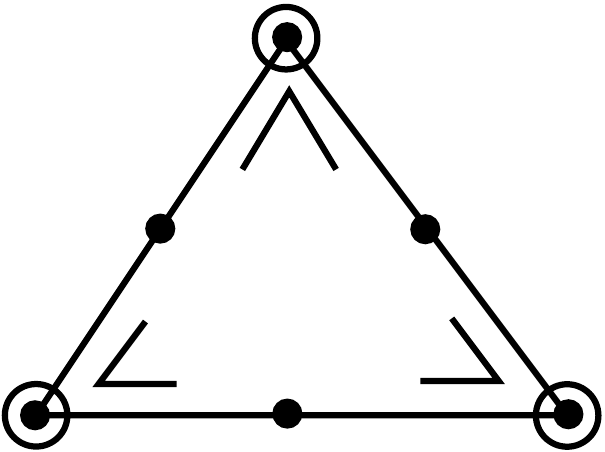}
   \qquad  \qquad   (b)\includegraphics[scale=.6]{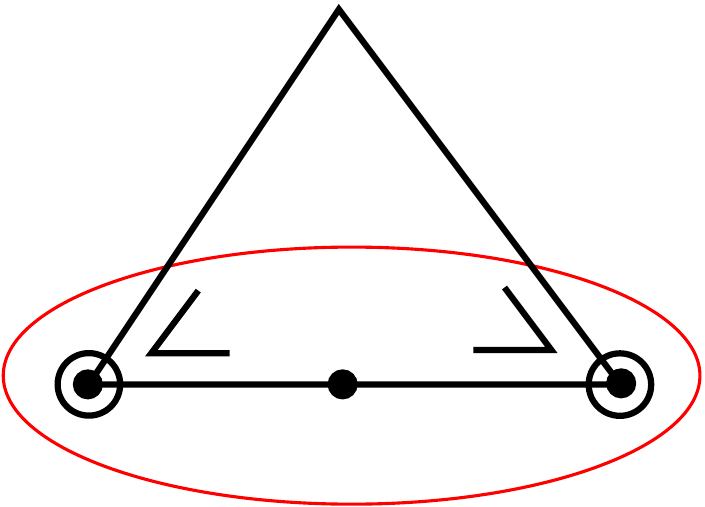}}
\caption{(a) Nodal basis for quartics.
(b) Nodes that determine value and gradient on an edge.}
\label{fig:quartics}
\end{figure}

Now let us compute $\dim S^4_h=\dim \Z_h^3$ under the assumption that the inf-sup
condition holds.
The space $\V^3_h$ can be described in terms of Lagrange nodes:
\begin{equation} \label{eqn:otherhahdnh}
\dim \V^3_h = 2 (T + 2 E_0 + V_0)  = 2 T + 4 E_0  +2 V_0 .
\end{equation}
We have $\sdiv \V^3_h\subset Q^2_h$, where the latter space consists of mean-zero 
piecewise quadratics that satisfy the alternating condition $A_h^z(\dive \vv)=0$
at singular vertices, where $A_h^z$ is defined in \eqref{eqn:ehaitchzeeq}:
$$
\dim Q^2_h= 6 T -1 - \sigma.
$$
Thus $\dim\sdiv \V^3_h=\dim Q^2_h-K$ for some integer $K\geq 0$, and
$\sdiv \V^3_h=Q^2_h$ if and only if $K=0$.
Since $$\dim \V^3_h=\dim(\hbox{image}\sdiv \V^3_h)+\dim \Z^3_h,$$ we have 
\begin{equation} \label{eqn:dimcompdith}
\begin{split}
\dim \Z^3_h &= \dim \V^3_h - \dim(\hbox{image}\sdiv \V^3_h) \\
&= 2 T + 4 E_0 + 2 V_0 - 6 T +1+\sigma +K\\
&= -4 T + 4 E_0 + 2 V_0 +1+\sigma +K.\\
\end{split}
\end{equation}
We have $3T=(E-E_0)+2E_0=E+E_0$. Thus
\begin{equation} \label{eqn:diddcompdith}
\dim \Z^3_h 
= - T -(E+E_0) + 4 E_0 + 2 V_0 +1+\sigma +K .\\
\end{equation}
By Euler's formula, $1=T-E+V=T-E_0+V_0=1$.
Thus $E_0-V_0=T-1$, and
\begin{equation} \label{eqn:biscompdith}
\begin{split}
\dim S^4_h = \dim \Z^3_h &= (V_0-E_0)-(E+E_0) + 4 E_0 + 2 V_0 +\sigma +K\\
&= 2 E_0 -E + 3 V_0 +\sigma +K.\\
\end{split}
\end{equation}
Technically, we actually have
$$
\dim S^4_h = \min\{0,2 E_0 - E+ 3 V_0 +\sigma+K\},
$$
since the dimension can never be negative.
There are cases where the number of boundary edges $E-E_0$ (which is the same
as the number of boundary vertices) is larger than $E_0 + 3 V_0 +\sigma$.
For a domain consisting of only two triangles, $E_0=1$, $V_0=0$, $\sigma=2$, and 
$E-E_0=4$, so the formula in \eqref{eqn:biscompdith} gives a negative number if $K=0$.
From now on, we assume that $E_0 -  (E-E_0) + 3 V_0 +\sigma\geq 0$ for $\Th$.

\begin{theorem}\label{thm:nearvstrang}
Let $k=3$ and suppose that $\Th$ is any triangulation satisfying
$E_0  + 3 V_0 +\sigma\geq  E-E_0$.
Then
$$ \sdiv \V^3_h=Q^{2}_h$$ if and only if
$$
\dim S^4_h = 2 E_0 - E + 3 V_0 +\sigma.
$$
More generally, 
$$ \dim \sdiv \V^3_h=\dim Q^{2}_h-K$$ if and only if
$$
\dim S^4_h = 2 E_0 - E + 3 V_0 +\sigma+K.
$$
\end{theorem}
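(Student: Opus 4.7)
The plan is to reduce everything to the dimensional identity already developed in the paragraphs preceding the theorem and to justify the one input we have not yet formally recorded, namely $\dim S^4_h=\dim \Z^3_h$. Once we have that identification, the theorem is a direct consequence of the rank-nullity computation, so the main task is bookkeeping combined with Euler's formula.

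\textbf{Step 1: Identify $S^4_h$ with $\Z^3_h$ via the curl.} Since $\Omega$ is simply connected, any $\vv\in\Z^3_h\subset H_0(\dive;\Omega)$ with zero divergence is the curl of a stream function $\phi$; subtracting a constant makes $\phi=0$ on $\partial\Omega$, and the boundary condition $\vv=\bfz$ on $\partial\Omega$ forces $\nabla\phi=\bfz$ on $\partial\Omega$ as well. Since $\vv$ is continuous piecewise cubic, $\phi$ must be $C^1$ piecewise quartic, hence $\phi\in S^4_h$. Conversely $\text{curl}:S^4_h\to\Z^3_h$ lands in $\Z^3_h$ by construction, and its kernel is the constants, which vanish by the boundary condition. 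Thus $\text{curl}$ is a linear isomorphism and $\dim S^4_h=\dim \Z^3_h$.

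\textbf{Step 2: Apply rank-nullity to $\sdiv:\V^3_h\to Q^2_h$.} Writing $\dim \V^3_h=\dim \Z^3_h+\dim\sdiv \V^3_h$ and substituting the hypothesis $\dim\sdiv \V^3_h=\dim Q^2_h-K$ gives
\[
\dim S^4_h \;=\; \dim \V^3_h-\dim Q^2_h+K.
\]
Plug in $\dim \V^3_h=2T+4E_0+2V_0$ (from the Lagrange node count in \eqref{eqn:otherhahdnh}) and $\dim Q^2_h=6T-1-\sigma$ to arrive at the intermediate form $\dim S^4_h=-4T+4E_0+2V_0+1+\sigma+K$, exactly as in \eqref{eqn:dimcompdith}.

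\textbf{Step 3: Eliminate $T$ using the mesh identities.} Every interior edge is shared by two triangles and every boundary edge by one, giving $3T=E+E_0$. Euler's formula for the planar triangulation of the simply connected $\Omega$ yields $T-E_0+V_0=1$, i.e.\ $-T=V_0-E_0-1$. Substituting both identities into the expression from Step 2 simplifies to
\[
\dim S^4_h \;=\; 2E_0 - E + 3V_0 + \sigma + K,
\]
which is the second assertion of the theorem. The hypothesis $E_0+3V_0+\sigma\geq E-E_0$ ensures that this expression is nonnegative, so the truncation $\min\{0,\cdot\}$ discussed after \eqref{eqn:biscompdith} is inactive and the identity is genuine. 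The special case $\sdiv \V^3_h=Q^2_h$ is $K=0$, giving the first assertion.

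\textbf{Step 4: The ``if and only if''.} Both statements assert an equivalence of the form ``$K=K_0$ iff $\dim S^4_h=C+K_0$'' with $C=2E_0-E+3V_0+\sigma$ fixed. Since Steps 1--3 give the identity $\dim S^4_h=C+K$ for the unique $K\geq 0$ defined by $\dim\sdiv \V^3_h=\dim Q^2_h-K$, the equivalence is immediate.

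The only nontrivial ingredient is Step 1: verifying that the discrete curl $S^4_h\to\Z^3_h$ is an isomorphism on a general simply connected polygonal domain. This is a standard discrete de Rham argument, but it relies on simple connectedness (which is assumed) and on matching the smoothness/boundary data on both sides correctly; the rest of the proof is elementary arithmetic with Euler's formula, and no further control over $K$ (beyond nonnegativity) is required.
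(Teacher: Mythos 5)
Your proposal is correct and follows essentially the same route as the paper: the theorem there is just the summary of the displayed computation preceding it, namely $\dim S^4_h=\dim \Z^3_h$ via the curl, rank--nullity for $\sdiv$ on $\V^3_h$ with the Lagrange-node count \eqref{eqn:otherhahdnh} and $\dim Q^2_h=6T-1-\sigma$, then elimination of $T$ through $3T=E+E_0$ and Euler's formula. The only difference is that you spell out the curl isomorphism $S^4_h\cong\Z^3_h$ (which the paper simply asserts, using simple connectedness), and that is a welcome but minor addition.
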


\subsection{Computing $\dim \widehat S^4_h$}

Now let us relate the spaces $S^4_h$ and $\widehat S^4_h$ by
imposing boundary conditions on $\widehat S^4_h$ to yield the space $S^4_h$.
Using the approach pioneered by Strang \cite{strang1973piecewise}, it is natural to
conjecture that this involves simply imposing constraints on the boundary.
For example, a $C^1$ piecewise quartic that vanishes to second order on $\partial\Omega$
must vanish to second order at each boundary vertex (3 constraints per boundary vertex).
In addition, the value at one point on each boundary edge must vanish, as well as 
the normal derivative at two points on each boundary edge.

To see why this is the right number of constraints, we pick special nodal variables 
for quartics as indicated in Figure \ref{fig:quartics}(a).
These are 
\begin{enumerate}
\item the value and gradient at each vertex, 
\item the value at edge midpoints, and
\item the second-order cross derivatives $\partial_{e_i}\partial_{e_j}$ evaluated
at the vertex $\nu_{ij}$ at the intersection of $e_i$ and $e_j$,
where the $e_k$'s are the edges of the triangle.
\end{enumerate}
More precisely, $\partial_{e_i}\phi(\nu_{ij})$ is defined as the directional derivative
of $\phi$ in the direction of $e_i$ away from $\nu_{ij}$.
These nodal variables are unisolvent for quartics, as follows.
Vanishing of nodal variables of type (1) and (2) guarantee vanishing on each edge; 
these are the standard nodal variables for Hermite quartics.
Thus a quartic $q$ with these nodal values zero is of the form $q=L_1 L_2 L_3 L$ 
where the non-trivial linear functions $L_i$ vanish on $e_i$.
But 
$$
\partial_{e_i}\partial_{e_j}q(\nu_{ij})
=\big(\partial_{e_i}L_j\big) \big(\partial_{e_j}L_i\big)L_k(\nu_{ij}) L(\nu_{ij}),
$$
where $\{i,j,k\}=\{1,2,3\}$ and $L_k(\nu_{ij})\neq 0$.
Thus vanishing of the nodal variables of type (3) implies that $L\equiv 0$.

Moreover, similar arguments show that the nodal variables for $q$ associated 
with a boundary edge, as indicated in Figure \ref{fig:quartics}(b), 
determine $q$ to second order on that edge.
Thus satisfication of second-order boundary conditions is guarenteed by
setting these nodal values to zero.
This reduces the dimension by at most $3(V-V_0)+3(E-E_0)=6(E-E_0)$,
since $E-E_0=V-V_0$. 
We will show that these conditions have (at least) one redundancy at singular 
boundary vertices.
Thus
$$
\dim S^4_h \geq \dim \widehat S^4_h -6(E-E_0)+\sigma_b,
$$
where $\sigma_b$ is the number of singular boundary vertices.
Therefore
\begin{equation} \label{eqn:predimzeecomph}
\dim S^4_h +6(E-E_0)-\sigma_b \geq \dim \widehat S^4_h \geq \sdim(\widehat S^4_h)=
E + 4V - V_0 + \sigma_i.
\end{equation}
Assume now that $\sdiv \V^3_h=Q^{2}_h$.
Using Theorem \ref{thm:nearvstrang} and $E-E_0=V-V_0$, we find
\begin{equation} \label{eqn:pexpnazeecomph}
\begin{split}
\dim S^4_h &+6(E-E_0)-\sigma_b =
 E_0 -  (E-E_0) + 3 V_0 +\sigma +6(E-E_0)-\sigma_b \\
 &= E_0 + 3 V_0 +\sigma_i +5(E-E_0) = E_0 + 3 V +\sigma_i +2(E-E_0) \\
 &= E + 3 V +\sigma_i +(E-E_0) = E + 4 V +\sigma_i -V_0 .\\
\end{split}
\end{equation}
Combining \eqref{eqn:predimzeecomph} and \eqref{eqn:pexpnazeecomph}
proves the following result.

\begin{theorem}\label{thm:equivstrang}
Let $k=3$ and suppose that $\Th$ is a triangulation satisfying
$E_0  + 3 V_0 +\sigma\geq  E-E_0$ and suppose that $\sdiv \V^3_h=Q^{2}_h$ on this triangulation. Then, the Strang dimension
\eqref{eqn:strangdim}  is valid for $\widehat S^4_h$, $\dim \widehat S^4_h= \sdim(\widehat S^4_h)$.
Moreover, equality holds in \eqref{eqn:predimzeecomph}, so the 
$6(E-E_0)-\sigma_b$ constraints are nonredundant.
\end{theorem}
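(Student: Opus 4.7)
The plan is to combine the chain of inequalities \eqref{eqn:predimzeecomph} with an explicit evaluation of $\dim S^4_h$ coming from Theorem \ref{thm:nearvstrang}, and then observe that the two bounds pinch the Strang dimension from both sides. First I would invoke Theorem \ref{thm:nearvstrang} with $K = 0$ (which is permitted by the hypothesis $\sdiv \V^3_h = Q^{2}_h$), yielding the exact identity $\dim S^4_h = 2 E_0 - E + 3 V_0 + \sigma$.

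Next I would substitute this value into the leftmost expression of \eqref{eqn:predimzeecomph} and simplify exactly as in \eqref{eqn:pexpnazeecomph}: decomposing $\sigma = \sigma_i + \sigma_b$ absorbs the $-\sigma_b$ term, and Euler's identity $E - E_0 = V - V_0$ converts the remaining edge counts into vertex counts. After these manipulations, $\dim S^4_h + 6(E - E_0) - \sigma_b$ equals precisely $E + 4V - V_0 + \sigma_i = \sdim(\widehat S^4_h)$.

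Combining with \eqref{eqn:predimzeecomph} then produces the squeeze
\[
\sdim(\widehat S^4_h) \;\geq\; \dim \widehat S^4_h \;\geq\; \sdim(\widehat S^4_h),
\]
so both inequalities must be equalities. The first equality is the Strang dimension formula $\dim \widehat S^4_h = \sdim(\widehat S^4_h)$; the second says that the bound $\dim S^4_h \geq \dim \widehat S^4_h - 6(E-E_0) + \sigma_b$ is sharp, i.e.\ the $6(E-E_0) - \sigma_b$ boundary constraints used to pass from $\widehat S^4_h$ to $S^4_h$ are nonredundant. The substantive work has already been carried out in proving Theorem \ref{thm:nearvstrang} and in establishing \eqref{eqn:predimzeecomph} (in particular the identification of exactly one redundancy at each singular boundary vertex, coming from the alternating-sign condition forced by the $C^1$ matching across the singular configuration); the step I am writing up here is a bookkeeping assembly, so no new obstacle is expected.
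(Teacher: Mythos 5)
Your proposal is correct and follows essentially the same route as the paper: invoke Theorem \ref{thm:nearvstrang} with $K=0$, run the arithmetic of \eqref{eqn:pexpnazeecomph} using $\sigma=\sigma_i+\sigma_b$ and $E-E_0=V-V_0$, and squeeze $\dim \widehat S^4_h$ between the two sides of \eqref{eqn:predimzeecomph} to force equality throughout. The only difference is organizational: the paper defers the justification of the $+\sigma_b$ term in \eqref{eqn:predimzeecomph} (the parallel-edge linear dependence of the cross derivatives $\partial_{e_i}\partial_{e_j}$ at singular boundary vertices) to the end of this theorem's proof, whereas you treat it as already established when \eqref{eqn:predimzeecomph} is stated, which is a fair reading of the text.
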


\begin{figure}
\centerline{(a)\includegraphics[scale=.65]{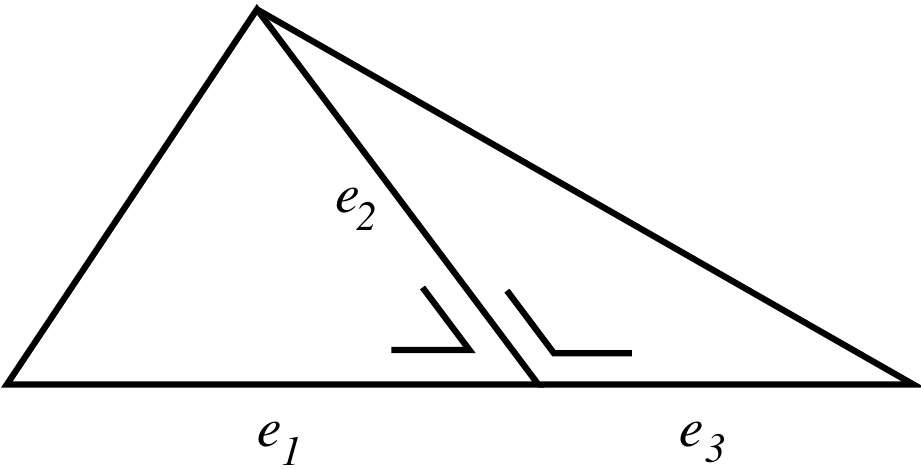}
  \qquad    (b)\includegraphics[scale=.6]{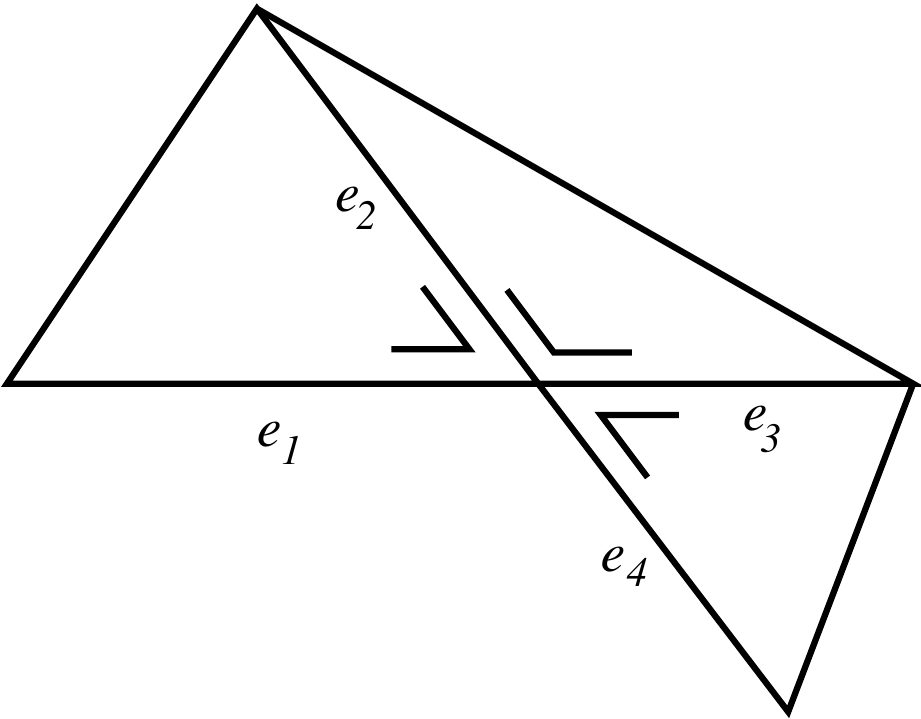}}
\caption{(a) Singular boundary vertex where two triangles meet.
(b) Singular boundary vertex where three triangles meet.}
\label{fig:singbdry}
\end{figure}

To complete the proof of Theorem \ref{thm:equivstrang}, we need to verify
the redundancy of constraints at singular boundary vertices.
This occurs because the 
second-order cross derivatives $\partial_{e_i}\partial_{e_j}$ are linearly
dependent at singular boundary vertices.
For the case of a triangle with two boundary edges $e_1$ and $e_2$,
the vanishing of the nodal variables of type (1) and (2) on $e_1$ and $e_2$
already imply vanishing on both edges, so necessarily 
$\partial_{e_1} \partial_{e_2}$ is already zero.

For the case where two triangles meet at a singular boundary vertex $\nu$, 
see Figure \ref{fig:singbdry}(a).
Then $e_1$ and $e_3$ are parallel, and thus
\begin{equation} \label{eqn:singbdrya}
\partial_{e_1} \partial_{e_2} \phi(\nu) =-\partial_{e_3} \partial_{e_2}\phi(\nu) 
\end{equation}
for any $C^1$ piecewise quartic $\phi$.
Thus setting one of them to zero sets the other; they are redundant.

For the case where three triangles meet at a singular boundary vertex $\nu$, 
see Figure \ref{fig:singbdry}(b).
Equation \eqref{eqn:singbdrya} still holds, and in addition
$e_2$ and $e_4$ are parallel, and thus
$$
\partial_{e_3} \partial_{e_2} \phi(\nu) =-\partial_{e_3} \partial_{e_4}\phi(\nu) 
$$
for any $C^1$ piecewise quartic $\phi$.
Thus
$$
\partial_{e_1} \partial_{e_2} \phi(\nu) =\partial_{e_3} \partial_{e_4}\phi(\nu) ,
$$
and setting one of them to zero sets the other; they are redundant.
This completes the proof of Theorem \ref{thm:equivstrang}.

Since Theorem \ref{mainthm} allows us to prove the inf-sup condition 
for quite general meshes, this confirms the Strang dimension 
\eqref{eqn:strangdim} for $\widehat S^4_h$ is correct for such meshes. In particular, if the hypothesis of Theorem \ref{mildconditions} holds then \eqref{eqn:strangdim} is correct. 

\subsection{Connection to Qin's results}

There is a connection between Qin's results and dimension counting.
Qin finds a spurious mode that suggests that $\dive \V^3_h\neq Q^2_h$ on
the right-traingle mesh in Figure \ref{fig:fortyfive}.
% which is known as a Type I triangulation \cite{lai2007spline}.
We conclude that the dimension of the space $S_h^4$ of $C^1$ quartics satisfying 
second-order boundary conditions on this mesh is at least one larger than the 
dimension for this space given in Theorem \ref{thm:nearvstrang}: $K\geq 1$.
On the other hand, it is well known \cite{lai2007spline,lrsBIBbv} that the
Strang dimension \eqref{eqn:strangdim} is correct on Type I triangulations 
without boundary conditions.
In view of \eqref{eqn:predimzeecomph}, there is a further redundancy in 
the constraints enforcing boundary conditions.
Unfortunately, the dimension of splines in two dimensions satisfying boundary conditions 
has had only limited study \cite{typeIItrianglebcs,anbo1989dimension} so far.

\begin{figure}
\centerline{\includegraphics[width=2.7in]{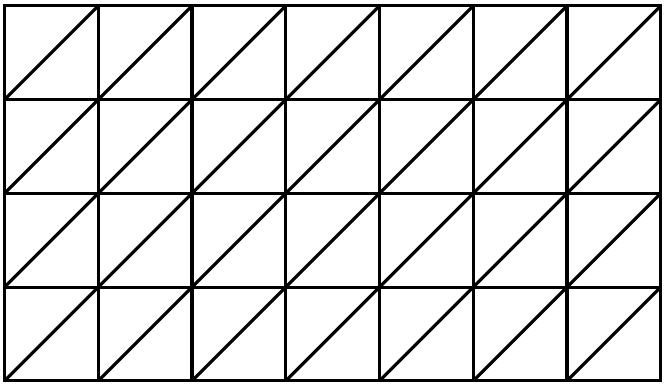}}
\caption{The Type I regular mesh studied in \cite[Chapter 6]{ref:QinThesis}}
\label{fig:fortyfive}
\end{figure}

\bibliography{Bibliography_BGSS}{}
\bibliographystyle{abbrv}

\end{document}